\newtheorem{theorem}{Theorem}[section]
\newtheorem{lemma}[theorem]{Lemma}
\newtheorem{proposition}[theorem]{Proposition}
\newtheorem{corollary}[theorem]{Corollary}
\newtheorem{prop-and-def}[theorem]{Proposition and Definition}
\theoremstyle{definition}
\newtheorem{definition}[theorem]{Definition}
\newtheorem{notation}[theorem]{Notation}
\newtheorem{remark}[theorem]{Remark}
\newtheorem{defrem}[theorem]{Definition and Remark}
\newtheorem{notation-and-remark}[theorem]{Notation and Remark}
\newtheorem{remark-and-definition}[theorem]{Remark and Definition}
\newtheorem{example}[theorem]{Example}
\newtheorem{ad-hoc}[theorem]{ }
\numberwithin{equation}{section}
\newcommand{\cA}{ {\mathcal A} }
\newcommand{\bC}{ {\mathbb C} }
\newcommand{\fM}{ {\mathfrak M} }
\newcommand{\bN}{ {\mathbb N} }
\newcommand{\bZ}{ {\mathbb Z} }
\newcommand{\piodd}{ \pi^{(\mathrm{odd})} }
\newcommand{\rhoeven}{ \rho^{(\mathrm{even})} }
\newcommand{\calt}{ \mathrm{calt} }
\newcommand{\Cat}{ \mathrm{Cat} }
\newcommand{\depth}{ \mathrm{depth} }
\newcommand{\Int}{ \mbox{Int} }
\newcommand{\Kr}{ \mbox{Kr} }
\newcommand{\oddtuple}{ \mathrm{oddtuple} }
\newcommand{\OuterMax}{ \mathrm{OuterMax} }
\newcommand{\Parent}{ \mathrm{Parent} }
\newcommand{\Ran}{ \mbox{Ran} }
\newcommand{\ee}{\varepsilon}
\newcommand{\oneA}{ 1_{{ }_{\cA}} }
\newcommand{\NCacfriendly}{ NC_{\mathrm{ac-friendly}} }
\newcommand{\ecdef}{ \stackrel{def}{\Longleftrightarrow} }
\newcommand{\lnest}{ \stackrel{\mathrm{nest}}{<} }
\newcommand{\leqnest}{ \stackrel{\mathrm{nest}}{\leq} }
\newcommand{\gnest}{ \stackrel{\mathrm{nest}}{>} }
\newcommand{\geqnest}{ \stackrel{\mathrm{nest}}{\geq} }
\title[Use of Boolean cumulants for free random variables]{Using
Boolean cumulants to study multiplication and anticommutators of 
free random variables}
\author[M. Fevrier]{Maxime Fevrier}
\address{Maxime Fevrier: Laboratoire de Math\'ematiques d'Orsay,
Universit\'e Paris Sud, CNRS, 
Universit\'e Paris-Saclay, 91405 Orsay, France.}
\email{maxime.fevrier@u-psud.fr}
\author[M. Mastnak]{Mitja Mastnak}
\address{Mitja Mastnak: Department of Mathematics \& Computing Science, 
        Saint Mary's University, Halifax, Nova Scotia B3H 3C3, Canada.}
\email{mmastnak@cs.smu.ca}
\author[A. Nica]{Alexandru Nica}
\thanks{MM, AN: research supported by a Discovery Grant from 
	NSERC, Canada.}
\address{Alexandru Nica: Department of Pure Mathematics, 
	University of Waterloo, Ontario, Canada.}
\email{anica@uwaterloo.ca}
\author[K. Szpojankowski]{Kamil Szpojankowski}
\thanks{KSz: research partially suported by NCN grant 2016/23/D/ST1/01077}
\address{Kamil Szpojankowski:
	Faculty of Mathematics and Information Science,
	Warsaw University of Technology, Poland.}
\email{k.szpojankowski@mini.pw.edu.pl}
\begin{document}

\begin{abstract}
We study how Boolean cumulants can be used in order to address
operations with freely independent random variables, particularly 
in connection to the $*$-distribution of the product of two 
selfadjoint freely independent random variables, and in connection 
to the distribution of the anticommutator of such random variables.
\end{abstract}

\maketitle

\section{Introduction}

\subsection{Multiplication of free random variables, in 
terms of free cumulants.}

$\ $

\noindent
Let $( \cA , \varphi )$ be a noncommutative probability space.
It is known since the 90's (cf. \cite{NiSp-Vo1996}) how to 
handle the multiplication of two freely independent elements 
of $\cA$ in terms of free cumulants.  More precisely, let 
$( \kappa_n : \cA^n \to \bC )_{n=1}^{\infty}$ be the family 
of free cumulant functionals of $( \cA , \varphi )$.  If 
$a,b \in \cA$ are freely independent, then the free cumulants
of the product $ab$ are described by the formula
\begin{equation}    \label{eqn:11a}
\kappa_n (ab, \ldots , ab) =
\sum_{\pi \in NC(n)} \prod_{U \in \pi} \kappa_{|U|} (a, \ldots , a)
\cdot \prod_{V \in \Kr ( \pi )} \kappa_{|V|} (b, \ldots , b),
\end{equation}
where $NC(n)$ is the lattice of non-crossing partitions 
of $\{ 1, \ldots , n \}$, and
$\Kr : NC(n) \to NC(n)$ is an important anti-automorphism of 
this lattice, called Kreweras complementation map.  The formula
(\ref{eqn:11a}) is very useful because it allows one to take 
advantage of many pleasant properties the lattices 
$NC(n)$ are known to have.  In particular, upon re-writing 
(\ref{eqn:11a}) in terms of formal power series and upon doing 
suitable manipulations, one can use it (cf. \cite{NiSp1997}) 
to derive the multiplicativity of the well-known $S$-transform 
of Voiculescu \cite{Vo1987}.

In view of how we will make our presentation of results below, 
it is worth mentioning here that the clearest proof of the formula
(\ref{eqn:11a}) is made in 3 steps, as follows:

\begin{equation}    \label{eqn:11b}
\left\{
\begin{array}{ll}
\mbox{Step 1.}  & 
\mbox{On the left-hand side of (\ref{eqn:11a}), use the formula}   \\
                & \mbox{$\ $ (with summation over $NC(2n)$) which describes}  \\
                & \mbox{$\ $ free cumulants with products as arguments.}      \\
\mbox{Step 2.}  & \mbox{Use the fact that, due to the 
                        freeness of $a$ from $b$,}            \\
                & \mbox{$\ $ all their mixed free cumulants 
                         vanish.}                             \\
\mbox{Step 3.}  & \mbox{Perform a direct combinatorial 
                        analysis of the non-crossing}         \\
                & \mbox{$\ $ partitions in $NC(2n)$ which 
                        were not pruned in Step 2.}
\end{array}   \right.
\end{equation}  

\vspace{6pt}

Let us now upgrade to the framework where $( \cA , \varphi )$
is a $*$-probability space, and where $a,b$ are two freely independent 
selfadjoint elements of $\cA$.  Since $ab$ isn't generally selfadjoint,
we now need to keep track of the joint moments, or equivalently of the 
joint free cumulants of $ab$ and $(ab)^{*} = ba$.  That is, we now need 
to look at free cumulants of the form
\begin{equation}    \label{eqn:11c}
\kappa_n \bigl( \, (ab)^{\ee (1)}, \ldots , (ab)^{\ee (n)} 
\, \bigr) , \ \mbox{ with $n \in \bN$ and }
\ee = ( \ee (1), \ldots , \ee (n) ) \in \{ 1,* \}^n .  
\end{equation}    
The tools invoked in Steps 1 and 2 of (\ref{eqn:11b}) can still be 
used in connection to the cumulants from (\ref{eqn:11c}).  But the 
combinatorial analysis in Step 3 (where some version of the Kreweras 
complementation map would be hoped to appear) becomes ad-hoc and 
does not seem to reveal a pattern -- this is seen on very simple 
examples, e.g. when doing the calculation which expresses 
$\kappa_3 ( ab, ab, (ab)^{*} )$ in terms of the $*$-free cumulants 
of $a$ and those of $b$.

\vspace{10pt}

\subsection{Use Boolean cumulants instead of free cumulants?}

$\ $

\noindent
For a noncommutative probability space $( \cA , \varphi )$, 
one can also consider the family 
$( \beta_n : \cA^n \to \bC )_{n=1}^{\infty}$ of 
Boolean cumulant functionals of $( \cA , \varphi )$.
Boolean cumulants are the analogue of free cumulants in the parallel
(and simpler) world of Boolean probability.  They are known to have 
some direct connections with free probability, particularly in 
connection to a development called ``Boolean Bercovici-Pata bijection''.  
An intriguing fact around this topic is that
\begin{center}
``the Boolean Bercovici-Pata bijection preserves the structure 

behind multiplication of free random variables''.
\end{center}
A possible way to pitch this fact is as follows: when one describes 
the multiplication of two freely independent elements $a, b \in \cA$ 
in terms of Boolean cumulants, the resulting formula has 
exactly the same structure as in (\ref{eqn:11a}):
\begin{equation}    \label{eqn:12a}
\beta_n (ab, \ldots , ab) =
\sum_{\pi \in NC(n)} \prod_{U \in \pi} \beta_{|U|} (a, \ldots , a)
\cdot \prod_{V \in \Kr ( \pi )} \beta_{|V|} (b, \ldots , b),
\ \ \forall \, n \geq 1.
\end{equation}
The formula (\ref{eqn:12a}) was first found in
\cite[Theorem 2']{BeNi2008}.  It can be proved via a strategy with 
3 steps parallel to the one described in (\ref{eqn:11b}).  
(See also \cite[Lemma 3.2]{PoWa2011} for a similar result stated in
terms of the so-called ``c-free cumulants'', which relate at the 
same time to free and to Boolean cumulants.)

The main point of the present paper is that, for Boolean cumulants, 
the strategy with 3 steps can be pushed to the framework where 
$( \cA, \varphi )$ is a $*$-probability space and where we look at 
Boolean cumulants of the form
\begin{equation}    \label{eqn:12b}
\beta_n \bigl( \, (ab)^{\ee (1)}, \ldots , (ab)^{\ee (n)} \, \bigr), 
\mbox{ with $n \in \bN$ and }
\ee = ( \ee (1), \ldots , \ee (n) ) \in \{ 1,* \}^n .  
\end{equation}
What makes this possible is the use of an alternative facet of Kreweras 
complementation, suggested by the work in \cite{BeNi2008}.  This facet 
of Kreweras complementation is discussed in the next subsection.
(We reiterate here that the possibility of using it is specific to 
Boolean cumulants, and -- as seen on very simple examples -- fails
to work for free cumulants.)

\vspace{10pt}

\subsection{Kreweras complementation and VNRP property.}

$\ $

\noindent
The definition of the Kreweras complement for a 
partition $\pi \in NC(n)$, as originally given by 
G. Kreweras \cite{Kr1972}, goes via a maximality argument.
One considers partitions (not necessarily non-crossing) 
of $\{ 1, \ldots , 2n \}$ of the form 
``$\piodd \sqcup \rhoeven$'', which have a copy of
$\pi$ placed on $\{ 1,3, \ldots , 2n-1 \}$ and a copy of 
some other partition $\rho \in NC(n)$ placed on 
$\{ 2,4, \ldots , 2n \}$.  The Kreweras complement 
$\Kr_n ( \pi )$ is the maximal element, with respect 
to the reverse refinement order ``$\leq$'' on 
$NC(n)$, for the set 
$\{ \rho \in NC(n) \mid \piodd \sqcup \rhoeven \in NC(2n) \}$.

The paper \cite{BeNi2008} considered another partial order 
``$\ll$'' on $NC(n)$, coarser than the reverse refinement order
$\leq$, which is useful for studying connections between free 
cumulants and Boolean cumulants.  In Proposition 6.10 of 
\cite{BeNi2008}, another maximality property related to Kreweras 
complements was noticed to hold: the partitions of 
the form $\piodd \sqcup ( \Kr_n (\pi) )^{\mathrm{(even)}}$ are 
the maximal elements with respect to $\ll$ for the set 
\begin{equation}   \label{eqn:13a}
\Bigl\{ \sigma \in NC(2n)  \begin{array}{ll}
\vline & \mbox{every block of $\sigma$ is contained either}  \\
\vline & \mbox{in $\{1,3, \ldots , 2n-1 \}$
               or in $\{ 2,4, \ldots , 2n \}$, }            \\
\vline & \mbox{and $\sigma$ has exactly two outer blocks}
\end{array}  \Bigr\} ;
\end{equation}
moreover, for every $\sigma$ in the above set, 
there exists a unique $\pi \in NC(n)$ such that 
$\sigma \ll \piodd \sqcup ( \Kr_n (\pi))^{\mathrm{(even)}}$.

\noindent
[The concept of outer block, and the related concept
of depth for a block of a non-crossing partition are reviewed
in Section 2 below.  
The requirement ``$\sigma$ has exactly two outer blocks''
in (\ref{eqn:13a}) is a minimality condition, since 
the block of $\sigma$ which contains the number $1$ and the 
block of $\sigma$ which contains the number $2n$ are always 
sure to be outer blocks.]

\vspace{6pt}

In the present paper we extend the result described above
to a framework where considering parities is a special case 
of considering a colouring 
$c : \{ 1, \ldots , 2n \} \to \{ 1,2 \}$ (given by 
$c(i) = i ( \mbox{mod $2$})$ for $1 \leq i \leq 2n$).
Upon examining the point of view of colourings, one finds 
that the property which isolates partitions of the form 
$\piodd \sqcup ( \Kr_n (\pi))^{\mathrm{(even)}}$ within 
the set (\ref{eqn:13a}) is a certain 
{\em vertical-no-repeat property}, or VNRP for short.  It 
is straightforward how to define VNRP for a general colouring 
(Definition \ref{def:41} below).  Given a partition 
$\sigma \in NC(m)$ and a colouring in $s$ colours 
$c : \{ 1, \ldots , m \} \to \{ 1, \ldots , s \}$ which is 
constant along the blocks of $\sigma$, the fact that 
$\sigma$ and $c$ have VNRP amounts to the requirement that
\[
c ( \, \Parent_{\sigma} (V) \, ) \neq c(V),
\ \ \mbox{ for every inner block $V$ of $\sigma$.}
\]
[Here we refer to the fairly intuitive fact that every inner 
block $V$ of a non-crossing partition $\sigma$ must have 
a ``parent-block'' $\Parent_{\sigma} (V)$ into which it is 
nested.  The precise definition of how this goes is 
reviewed in Section 2 below.]

The key-property of VNRP, extending the considerations on 
Kreweras complements from the preceding paragraph, is then 
stated as follows.

\begin{theorem}   \label{thm:131}
{\em (Key-property of VNRP.) }

\noindent
Let $m$ be in $\bN$, let 
$c : \{ 1, \ldots , m \} \to \{ 1, \ldots , s \}$
be a colouring, and consider the set of partitions 
\[
NC(m;c) := \{ \sigma \in NC(m) \mid 
\mbox{ $c$ is constant on every block of $\sigma$} \} .
\]
For every $\sigma \in NC(m;c)$ there exists a $\tau \in NC(m;c)$,
uniquely determined, such that $\sigma \ll \tau$ and such that 
$\tau$ has the VNRP property 
\footnote{ The correct formulation here would be to say that 
$\tau$ and $c$ (together) have VNRP.  We will occasionally replace
this with saying that ``$\tau$ has VNRP with respect to $c$'',
or that ``$c$ has VNRP with respect to $\tau$''.}
with respect to $c$.
\end{theorem}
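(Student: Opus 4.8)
\emph{Strategy.} The idea is to recast the VNRP requirement as a \emph{maximality} property for the order $\ll$, and then prove the theorem by writing down $\tau$ explicitly. We use the combinatorial description of $\ll$ (cf.\ \cite{BeNi2008} and the review in Section~2); what is needed is that, for $\pi\leq\sigma$ in $NC(m)$, one has $\pi\ll\sigma$ if and only if every block $W$ of $\sigma$ has $\min W$ and $\max W$ in one block of $\pi$ (equivalently, the restriction of $\pi$ to $W$ has a single outer block). Call $\rho\in NC(m;c)$ \emph{$\ll$-maximal} if no $\rho'\in NC(m;c)$ satisfies $\rho\ll\rho'$ and $\rho'\neq\rho$. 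The heart of the argument is the equivalence: \emph{$\rho\in NC(m;c)$ has VNRP with respect to $c$ if and only if $\rho$ is $\ll$-maximal.} Granting this, Theorem~\ref{thm:131} becomes the assertion that every $\sigma\in NC(m;c)$ lies $\ll$-below a unique $\ll$-maximal element of $NC(m;c)$, and the candidate for that element can be produced directly.

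\emph{The candidate, and existence.} Form the nesting forest of $\sigma$: its vertices are the blocks of $\sigma$, each inner block $V$ is joined to $\Parent_\sigma(V)$, and $V$ is coloured by $c(V)$ (well defined since $\sigma\in NC(m;c)$). Delete every edge along which the colour changes; the connected components of what is left are the \emph{maximal monochromatic subtrees} of $\sigma$. Let $\widehat\sigma$ be obtained from $\sigma$ by merging, for each such subtree $S$, all the blocks of $\sigma$ contained in $S$ into a single block. Then $\widehat\sigma\in NC(m;c)$: it is monochromatic by construction, and it is non-crossing because it arises from $\sigma$ by finitely many operations ``absorb a block into the block it is nested in'', each preserving non-crossingness. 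One has $\sigma\ll\widehat\sigma$: a block $B$ of $\widehat\sigma$ comes from a subtree $S$ rooted at a $\sigma$-block $V_0$, and since all blocks of $S$ are nested in $\sigma$ below $V_0$ we get $\min B=\min V_0$ and $\max B=\max V_0$, which lie in the single $\sigma$-block $V_0$. Finally $\widehat\sigma$ has VNRP: a routine check shows that the nesting forest of $\widehat\sigma$ is the quotient of the nesting forest of $\sigma$ by the partition into maximal monochromatic subtrees, so the $\widehat\sigma$-parent of $B$ is the block coming from the subtree containing $\Parent_\sigma(V_0)$; and $c(\Parent_\sigma(V_0))\neq c(V_0)$, for otherwise that edge would be monochromatic and $V_0$ would not be the root of $S$. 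This proves existence.

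\emph{Uniqueness.} Let $\sigma\ll\tau$ with $\tau\in NC(m;c)$ having VNRP; we show $\tau=\widehat\sigma$. First: if $V$ is an inner block of $\sigma$ with $c(V)=c(P)$, where $P:=\Parent_\sigma(V)$, then $V$ and $P$ lie in the same block of $\tau$. Suppose not, and let $W\ni V$ and $W'\ni P$ be the two (disjoint) blocks of $\tau$; let $(p,p')$ be the gap of $P$ containing $V$, so $p,p'\in P\subseteq W'$. Since $W$ does not cross $W'$ one forces $W\subseteq(p,p')$; since $\sigma\ll\tau$, $\min W$ and $\max W$ lie in one $\sigma$-block $W_0$, which then satisfies $W_0\subseteq(p,p')$ and, using $V\subseteq W$ and non-crossingness, $W_0=V$; and one more non-crossing argument, again using $\sigma\ll\tau$, rules out any $\tau$-block strictly between $W$ and $W'$, so $\Parent_\tau(W)=W'$. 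But then $c(\Parent_\tau(W))=c(W')=c(P)=c(V)=c(W)$, contradicting VNRP of $\tau$. Taking transitive closures, every maximal monochromatic subtree of $\sigma$ sits inside one block of $\tau$, i.e.\ $\widehat\sigma\leq\tau$. Since $\sigma\ll\tau$, the min and max of any $\tau$-block lie in one $\sigma$-block, hence in one $\widehat\sigma$-block, so in fact $\widehat\sigma\ll\tau$; and $\widehat\sigma$ is $\ll$-maximal (being VNRP), whence $\widehat\sigma=\tau$.

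\emph{The equivalence, and the main obstacle.} It remains to prove that VNRP $\Leftrightarrow$ $\ll$-maximality in $NC(m;c)$. If $\rho\in NC(m;c)$ fails VNRP, take an inner block $V$ with $c(\Parent_\rho(V))=c(V)$ and absorb $V$ into $\Parent_\rho(V)$: the result lies in $NC(m;c)$, is strictly coarser, and is $\ll$-above $\rho$ (the min and max of the enlarged block are those of $\Parent_\rho(V)$), so $\rho$ is not $\ll$-maximal. Conversely, if $\rho\in NC(m;c)$ has VNRP but $\rho\ll\rho'$ for some $\rho'\in NC(m;c)$ with $\rho'\neq\rho$, pick a block $W'$ of $\rho'$ that is a union of at least two blocks of $\rho$, let $W_0$ be the block of $\rho$ containing $\min W'$ and $\max W'$ (so $W_0\subsetneq W'$), and let $V$ be a block of $\rho$ contained in $W'\setminus W_0$ of minimal nesting depth; a non-crossing analysis (using $\rho\ll\rho'$) forces $\Parent_\rho(V)=W_0$, and monochromaticity of $W'$ gives $c(V)=c(W_0)=c(\Parent_\rho(V))$, contradicting VNRP. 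I expect this last non-crossing argument, together with the analogous one in the uniqueness step, to be the main obstacle: morally everything is governed by the single principle that a block which is $\ll$-absorbed into another must lie inside a gap of the root of the absorbing block, but extracting from it the precise nesting statements above requires a careful case analysis of the relative positions of the blocks involved in the respective nesting forests.
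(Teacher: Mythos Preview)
Your argument is correct. The fiddly non-crossing verifications you flag as the ``main obstacle'' do go through: in the uniqueness step, once you know the $\rho$-block $W_0$ carrying $\min W,\max W$ satisfies $W_0\subseteq(p,p')$ and $V\leqnest W_0$, the fact that $P=\Parent_\sigma(V)$ forces $W_0=V$; and to rule out an intermediate $W''$ with $W\lnest W''\lnest W'$, the $\sigma$-block holding $\min W'',\max W''$ is again squeezed between $V$ and $P$. The analogous minimal-depth argument in the equivalence step works for the same reason: the parent of your chosen $V$ is forced to lie in $W'$ (else $V$ would sit in a gap of a $\rho'$-block strictly nested inside $W'$, contradicting $V\subseteq W'$), and minimality then pins it to $W_0$.

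Your route differs from the paper's in an instructive way. The paper develops the machinery of \emph{block-projection maps} $\Phi:\sigma\to\sigma$ (Section~3.1), parametrizes $\{\rho\in NC(m)\mid \rho\gg\sigma\}$ by such $\Phi$, and then proves uniqueness by showing (Proposition~\ref{prop:Y5}) that the set of $\tau$-special blocks of $\sigma$ is forced to equal $\{$outer blocks$\}\cup\{$inner $A$ with $c(\Parent_\sigma A)\neq c(A)\}$; existence is obtained non-constructively by noting that $\ll$-maximal elements of $NC(m;c)$ have VNRP (Lemma~\ref{lem:46}) and that a maximal majorant exists. The paper in fact remarks that one could instead verify directly that the ``candidate for $\tau$'' suggested by Proposition~\ref{prop:Y5} works, and leaves this as an exercise---that is exactly what you do. Your $\widehat\sigma$, built by collapsing maximal monochromatic subtrees of the nesting forest, is the partition whose set of $\sigma$-special blocks is the set named in Proposition~\ref{prop:Y5}; the subtree rooted at a block $B$ in that set is precisely $\Phi^{-1}(B)$ for the corresponding block-projection. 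What you gain is a self-contained, constructive proof that bypasses the block-projection formalism and, as a bonus, establishes both directions of the equivalence ``VNRP $\Leftrightarrow$ $\ll$-maximal in $NC(m;c)$'' (the paper proves only one direction and leaves the converse as Remark~\ref{rem:46}). What the paper's approach buys is a cleaner separation of concerns: the block-projection parametrization of upper $\ll$-ideals is set up once and then does the bookkeeping, so the uniqueness argument becomes a two-line range computation rather than a chain of nesting inequalities.
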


$\ $

\subsection{Free independence in terms of Boolean cumulants.}

$\ $

\noindent
An easy calculation based on Theorem \ref{thm:131} leads to a
description of free independence in terms of Boolean cumulants,
as follows.

\begin{theorem}      \label{thm:141}
Let $( \cA , \varphi )$ be a noncommutative probability
space and let $( \beta_n : \cA^n \to \bC )_{n=1}^{\infty}$ be 
the family of Boolean cumulant functionals associated to it.  
Let $\cA_1,\ldots,\cA_s\subseteq \cA$ be unital subalgebras.
The following two statements are equivalent.

\vspace{6pt}

(1) $\cA_1,\ldots,\cA_s$ are free with respect to $\varphi$.

\vspace{6pt}

(2) For every $n \in \bN$, every colouring 
$c : \{ 1, \ldots , n \} \to \{ 1, \ldots , s \}$, and every 
$a_1 \in \cA_{c(1)}, \ldots a_n \in \cA_{c(n)}$, one has
\begin{equation}    \label{eqn:141a}
\beta_n (a_1, \ldots , a_n) = 
\sum_{  \begin{array}{c}
{\scriptstyle \pi \in NC(n;c) \ with}  \\
{\scriptstyle unique \ outer \ block}   \\
{\scriptstyle and \ with \ VNRP}
\end{array}  } \ \prod_{V \in \pi} 
\beta_{|V|} \bigl( (a_1, \ldots a_n) | V \bigr).
\end{equation}
\end{theorem}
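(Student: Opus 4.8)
The plan is to rewrite statement (2) entirely in the language of free cumulants and then read off both implications from the characterization of freeness by the vanishing of mixed free cumulants. Write $(\kappa_n)_{n\ge 1}$ for the free cumulant functionals of $(\cA,\varphi)$, and for $\rho\in NC(m)$ abbreviate $\kappa_\rho(a_1,\ldots,a_m):=\prod_{W\in\rho}\kappa_{|W|}\bigl((a_1,\ldots,a_m)|W\bigr)$, and likewise $\beta_\rho$. Besides Theorem \ref{thm:131}, the one ingredient I would invoke is the expansion of a product of Boolean cumulants as a sum of products of free cumulants indexed by $\ll$-smaller non-crossing partitions --- the facet of the free/Boolean cumulant relationship carried by the order $\ll$ of \cite{BeNi2008}: for every $\pi\in NC(n)$,
\[
\beta_\pi(a_1,\ldots,a_n)=\sum_{\sigma\in NC(n),\ \sigma\ll\pi}\kappa_\sigma(a_1,\ldots,a_n),
\]
which for $\pi$ the one-block partition specializes to $\beta_n(a_1,\ldots,a_n)=\sum_\pi\kappa_\pi(a_1,\ldots,a_n)$, the sum running over those $\pi\in NC(n)$ that have a unique outer block.

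First I would simplify the right-hand side of (\ref{eqn:141a}) \emph{unconditionally}. Substituting the displayed identity into each of its summands and interchanging the two sums turns it into $\sum_\sigma\kappa_\sigma(a_1,\ldots,a_n)\,N(\sigma)$, where $N(\sigma)$ is the number of $\pi\in NC(n;c)$ that have a unique outer block, have VNRP with respect to $c$, and satisfy $\sigma\ll\pi$. Here one checks, straight from the meaning of $\ll$ (namely: $\sigma$ refines $\pi$, and within every block $V$ of $\pi$ the partition induced by $\sigma$ has a single outer block), that (a) $\sigma\in NC(n;c)$ whenever $\pi\in NC(n;c)$, and (b) $\sigma$ has a unique outer block if and only if $\pi$ does --- the ``only if'' because $\sigma$ refines $\pi$, the ``if'' because the outer block $B$ of $\pi$ has $\min B=1$, $\max B=n$, which therefore lie in a common block of $\sigma$. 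Hence $N(\sigma)=0$ unless $\sigma\in NC(n;c)$ has a unique outer block; and when it does, the $\ll$-larger VNRP partition $\tau$ produced by Theorem \ref{thm:131} has a unique outer block too, by (b), and is by the uniqueness clause of that theorem the only contributor, so $N(\sigma)=1$. Thus the right-hand side of (\ref{eqn:141a}) equals $\sum_\sigma\kappa_\sigma(a_1,\ldots,a_n)$, summed over $\sigma\in NC(n;c)$ with a unique outer block.

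Now both sides of (\ref{eqn:141a}) are sums of free-cumulant products over non-crossing partitions with a unique outer block --- the left-hand side over all of them, the right-hand side over those additionally lying in $NC(n;c)$ --- so (\ref{eqn:141a}), for a fixed coloured tuple, is equivalent to the single identity
\[
\sum_{\substack{\pi\in NC(n),\ \text{unique outer block}\\ \pi\notin NC(n;c)}}\kappa_\pi(a_1,\ldots,a_n)=0,
\]
the sum being over $\pi$ with a unique outer block and at least one block on which $c$ is not constant. Then (1)$\Rightarrow$(2) is immediate: if the $\cA_i$ are free, every mixed free cumulant vanishes, so each summand contains a vanishing factor. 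For (2)$\Rightarrow$(1) I would show by induction on $n$ that all mixed free cumulants vanish: given a non-constant colouring $c$ and $a_i\in\cA_{c(i)}$, in the identity above every $\pi$ different from the one-block partition has all blocks of size $<n$, hence contributes $0$ by the induction hypothesis, while the one-block partition does occur in the sum (it has a unique outer block, and is not in $NC(n;c)$ since $c$ is non-constant); so the identity collapses to $\kappa_n(a_1,\ldots,a_n)=0$. As the vanishing of all mixed free cumulants is equivalent to the freeness of $\cA_1,\ldots,\cA_s$, this completes the proof.

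The step I expect to need the most care is the unconditional simplification above: one must verify that, after swapping summations, every product $\kappa_\sigma$ is generated exactly once, and this is exactly where both halves of Theorem \ref{thm:131} --- existence \emph{and} uniqueness of the $\ll$-larger VNRP partition --- come in, together with the small bookkeeping of point (b), that the ``unique outer block'' property is compatible with $\ll$ in both directions. Everything else is routine manipulation within the partition/cumulant dictionary.
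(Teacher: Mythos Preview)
Your argument is correct. For the forward implication $(1)\Rightarrow(2)$ your approach is essentially the paper's: both pivot on Theorem~\ref{thm:131} to show that the set $\{\sigma\in NC(n;c):\sigma\ll 1_n\}$ is fibred by the VNRP partitions, so that the free-cumulant expansion of the left-hand side regroups into the Boolean-cumulant sum on the right. You simply run the computation from the right-hand side rather than the left, but the content is identical.

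Where you genuinely diverge is in $(2)\Rightarrow(1)$. The paper uses a replica trick: it forms the free product $(\widetilde\cA,\widetilde\varphi)=\ast_i(\cA_i,\varphi|_{\cA_i})$, applies the already-proved implication $(1)\Rightarrow(2)$ inside $\widetilde\cA$, and compares the two instances of formula~(\ref{eqn:141a}) to conclude that all joint moments under $\varphi$ and $\widetilde\varphi$ agree. Your route is more direct and more elementary: having rewritten~(\ref{eqn:141a}) unconditionally as the vanishing of $\sum_\pi\kappa_\pi$ over $\pi\notin NC(n;c)$ with a unique outer block, you peel off $\kappa_n$ by induction on $n$, using that every other term in that sum contains a mixed free cumulant of strictly smaller order. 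This avoids the free-product construction entirely and keeps the argument inside $(\cA,\varphi)$. The paper's version has the advantage of being a one-line appeal to a standard device once $(1)\Rightarrow(2)$ is in hand; yours has the advantage of making transparent that condition~(2) is \emph{equivalent}, not just sufficient, to the vanishing of mixed free cumulants.
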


\begin{remark}   \label{rem:142}
(1) The essential part of Theorem \ref{thm:141} is the 
implication $(1) \Rightarrow (2)$, which gives an explicit 
formula for Boolean cumulants with free arguments.
Once this is established, the converse
$(2) \Rightarrow (1)$ follows by combining
$(1) \Rightarrow (2)$ with a standard ``replica trick''.

(2) Coming from the study of a very general notion of 
noncommutative independence, Proposition 4.30 of the recent 
paper \cite{JeLi2019} gives a description of free independence
in terms of Boolean cumulants which (when considered with 
$\bC$ as field of scalars) is equivalent to the above 
Theorem \ref{thm:141}.  To be precise, condition 2 in 
Proposition 4.30 of \cite{JeLi2019} is the moment formula 
which comes out when one performs an additional summation 
over interval partitions on both sides of (\ref{eqn:141a})
-- see Corollary \ref{cor:53} below.  Conversely, the latter 
moment formula can be used to retrieve Equation (\ref{eqn:141a}), 
via an easy application of M\"obius inversion.

(3) Equation (\ref{eqn:141a}) implies an amusing formula 
for the Boolean cumulants of the sum of two freely independent 
random variables.  The structure of this formula cannot be as 
simple as what one gets by using free cumulants, but we present 
it nevertheless in Proposition \ref{prop:55} below, in 
anticipation of the similarly looking formula concerning free 
anticommutators (where the use of free cumulants does not 
provide a simpler alternative).
\end{remark}

\vspace{10pt}

\subsection{Joint Boolean cumulants for 
$\mathbf{ab}$ and $\mathbf{(ab)^{*}}$, and
free anticommutators.}

$\ $

\noindent
We now continue the thread from Section 1.2, concerning the 
joint Boolean cumulants of $ab$ and $(ab)^{*}$, where $a$ 
and $b$ are freely independent selfadjoint elements in a 
$*$-probability space.  We will put into evidence some
special sets of non-crossing partitions which appear in the 
explicit formula for a joint cumulant
$ \beta_n \bigl( \, (ab)^{\ee (1)}, \ldots , 
(ab)^{\ee (n)} \, \bigr)$ as in (\ref{eqn:12b}) and then
(upon doing a summation over $\ee \in \{ 1,* \}^n$) in the 
explicit formula for the Boolean cumulants of a free 
anticommutator $ab + ba$.  We will refer to these special 
non-crossing partitions by using the ad-hoc term of 
``anticommutator-friendly'', due to how they appear in the 
formula for the Boolean cumulants of a free anticommutator, 
in Theorem \ref{thm:154} below.  Their actual definition 
doesn't, however, require any knowledge of a free 
probabilistic framework, and is stated as follows.

$\ $

\begin{definition}    \label{def:151}
Let $n$ be a positive integer, let $\sigma$ be a 
partition in $NC(2n)$, and let us consider the set 
$\OuterMax ( \sigma ) := \{ \max (W) \mid
\mbox{ $W$ is an outer block of $\sigma$} \}$.
We will say that $\sigma$ is 
{\em anticommutator-friendly} when it satisfies 
the following two conditions.

\vspace{6pt}

\noindent
(AC-Friendly1)  $\OuterMax ( \sigma ) \subseteq
\{ 1,3, \ldots , 2n-1 \} \cup \{ 2n \}$.

\vspace{6pt}

\noindent
(AC-Friendly2)  For every 
$j \in \{ 1,3, \ldots , 2n-1 \} \setminus \OuterMax ( \sigma )$, 
one has $\depth_{\sigma} (j) \neq \depth_{\sigma} (j+1)$,
where ``$\depth_{\sigma} (j)$'' stands for the depth 
of the block of $\sigma$ which contains the number $j$.
\end{definition}

\begin{notation-and-remark}   \label{def:152}
For every $n \in \bN$ we will denote
\[
\NCacfriendly (2n) := \{ \sigma \in NC(2n) \mid
\mbox{ $\sigma$ is anticommutator-friendly} \} .
\]
For instance $\NCacfriendly (2)$ consists of only one 
partition, $\{ \, \{ 1 \} , \{ 2 \} \} \in NC(2)$.
(The partition $\sigma = \{ \, \{ 1,2 \} \, \}$ is not 
anticommutator-friendly because it has 
$\depth_{\sigma} (1) = \depth_{\sigma} (2) = 0$.)
The next such set, $\NCacfriendly (4)$, consists of 
the 5 partitions which are depicted in Figure 1 below.
The cardinalities of the sets of partitions 
$\NCacfriendly (2n)$ are tractable, in the respect 
that their generating series satisfies an algebraic 
equation of order 4, which can be solved explicitly:
\begin{equation}   \label{eqn:15a}
\sum_{n=1}^{\infty}
| \, \NCacfriendly (2n) \, | z^n 
= \frac{1}{2} 
  - \sqrt{ (1-8z) \frac{1-2z-\sqrt{1-8z}}{8z} }.
\end{equation}
\end{notation-and-remark} 

\vspace{0.5cm}

\begin{minipage}{1\linewidth}
\begin{center}
\begin{tikzpicture}			
		\draw[thick] (0,0) -- (0,0.25);
		\draw[thick] (2,0) -- (2,0.25);
		\draw[thick] (-1,0) to[out=90,in=90] node [sloped,above] {} (1,0);
		
		\node[below] (1) at (-1,0) {$1$};
		\node[below] (2) at (0,0) {$2$};
		\node[below] (3) at (1,0) {$3$};
		\node[below] (4) at (2,0) {$4,$};

	\draw[thick] (4,0) -- (4,0.25);
	\draw[thick] (7,0) -- (7,0.25);
	\draw[thick] (5,0) to[out=90,in=90] node [sloped,above] {} (6,0);
	
	\node[below] (1) at (4,0) {$1$};
	\node[below] (2) at (5,0) {$2$};
	\node[below] (3) at (6,0) {$3$};
	\node[below] (4) at (7,0) {$4,$};
		
		\draw[thick] (9,0) -- (9,0.25);
		\draw[thick] (11,0) -- (11,0.25);
		\draw[thick] (10,0) to[out=90,in=90] node [sloped,above] {} (12,0);

		\node[below] (7) at (9,0) {$1$};
		\node[below] (8) at (10,0) {$2$};
		\node[below] (9) at (11,0) {$3$};
		\node[below] (10) at (12,0) {$4,$};
\end{tikzpicture}
\end{center}
\end{minipage}
\begin{minipage}{1\linewidth}
\begin{center}
	\begin{tikzpicture}			
	\draw[thick] (2,0) -- (2,0.25);
	\draw[thick] (3,0) -- (3,0.25);
	\draw[thick] (1,0) to[out=90,in=90] node [sloped,above] {} (4,0);
	
	\node[below] (1) at (1,0) {$1$};
	\node[below] (2) at (2,0) {$2$};
	\node[below] (3) at (3,0) {$3$};
	\node[below] (4) at (4,0) {$4,$};

	\draw[thick] (7,0) to[out=90,in=90] node [sloped,above] {} (10,0);
	\draw[thick] (8,0) to[out=90,in=90] node [sloped,above] {} (9,0);

	\node[below] (7) at (7,0) {$1$};
	\node[below] (8) at (8,0) {$2$};
	\node[below] (9) at (9,0) {$3$};
	\node[below] (10) at (10,0) {$4.$};
	\end{tikzpicture}
\end{center}
\end{minipage}

\begin{center}
{\bf Figure 1.}
{\em The 5 partitions in $\NCacfriendly (4)$.}
\end{center}

$\ $

The formulas to be stated in Theorems  \ref{thm:153} 
and \ref{thm:154} below will also refer to a 
``canonical alternating colouring'' of the blocks of 
a non-crossing partition, which is described next.

\begin{definition}    \label{def:153}
Let $m \in \bN$ and $\sigma \in NC(m)$ be given.  We 
will use the name {\em canonical alternating colouring} 
of $\sigma$ for the colouring
$\calt_{\sigma} : \{ 1, \ldots , m \} \to \{ 1,2 \}$
whose values on the blocks of $\sigma$ are determined
by the following conditions:

\vspace{6pt}

\noindent
(C-Alt1) Denoting by $W_1$ the block of $\sigma$ 
which contains the number $1$, one has
$\calt_{\sigma} (W_1) = 1$.

\vspace{6pt}

\noindent
(C-Alt2) If $W$ and $W'$ are ``consecutive'' outer 
blocks of $\sigma$, with $\min (W') = 1 + \max (W)$, then 
$\calt_{\sigma} (W') \neq \calt_{\sigma} (W)$.

\vspace{6pt}

\noindent
(C-Alt3) If $V$ is an inner block of $\sigma$, then
$\calt_{\sigma} (V) \neq 
\calt_{\sigma} ( \, \Parent_{\sigma} (V) \, )$.
\end{definition}

$\ $

Note that if $\sigma \in NC(m)$ has a unique outer block, 
then $\calt_{\sigma}$ simply follows the parities of 
the depths of blocks of $\sigma$.  For a general 
$\sigma \in NC(m)$, $\calt_{\sigma}$ first does an 
alternating colouring of the outer blocks of $\sigma$, 
going from left to right; then for every outer block 
$W$ of $\sigma$ one follows the vertical alternance idea 
in order to colour the blocks of $\sigma$ which are 
nested inside $W$.

In order to state the explicit formula for a joint
Boolean cumulant of the kind indicated in (\ref{eqn:12b}) 
of Section 1.2, there is one last observation we 
need to make, namely that: in the canonical alternating 
colouring of a partition $\sigma \in \NCacfriendly (2n)$ 
one can naturally read (encoded in the colouring) a tuple 
$\ee \in \{ 1,* \}^n$, which will be denoted as 
``$\oddtuple ( \sigma )$''  -- see Notation \ref{def:64} 
below for the precise definition.
We then have the following theorem.

$\ $

\begin{theorem}   \label{thm:153}
Let $( \cA , \varphi )$ be a $*$-probability space 
and let $( \beta_n : \cA^n \to \bC )_{n=1}^{\infty}$ be the 
family of Boolean cumulant functionals associated to it.
Consider two selfadjoint elements $a,b \in \cA$ such that 
$a$ is freely independent from $b$, and consider the 
sequences of Boolean cumulants
$( \beta_n (a) )_{n=1}^{\infty}$ and 
$( \beta_n (b) )_{n=1}^{\infty}$
of $a$ and of $b$ (where we use natural abbreviations such 
as $\beta_n (a) := \beta_n (a, \ldots , a)$, $n \in \bN$).

\vspace{6pt}

(1) For $n \in \bN$ and 
$\ee = ( \ee (1), \ldots , \ee (n)) \in \{ 1,* \}^n$
such that $\ee (1) = 1$, one has
\begin{equation}   \label{eqn:153a}
\beta_n \bigl( \, (ab)^{\ee (1)}, \ldots ,
(ab)^{\ee (n)} \, \bigr) =
\end{equation}
\[
\sum_{ \begin{array}{c}
{\scriptstyle \sigma \in \NCacfriendly (2n),} \\
{\scriptstyle such \ that}    \\
{\scriptstyle \oddtuple ( \sigma ) = \ee}
\end{array} } 
\ \Bigl( 
\prod_{ \begin{array}{c}
{\scriptstyle U \in \sigma , \ with}  \\
{\scriptstyle \calt_{\sigma} (U) = 1} 
\end{array} } \
\beta_{|U|} (a) \Bigr)  \cdot
\Bigl( \prod_{ \begin{array}{c}
{\scriptstyle V \in \sigma , \ with}  \\
{\scriptstyle \calt_{\sigma} (V) = 2} 
\end{array} } \ \beta_{|V|} (b) \Bigr). 
\]

\vspace{6pt}

(2) Let $n \in \bN$ and let
$\ee = ( \ee (1), \ldots , \ee (n)) \in \{ 1,* \}^n$
be such that $\ee (1) = *$.  Consider the complementary 
tuple $\ee ' \in \{ 1,* \}^n$, uniquely determined by 
the requirement that $\ee ' (i) \neq \ee (i)$, for all
$1 \leq i \leq n$.  One has
\begin{equation}   \label{eqn:153b}
\beta_n \bigl( \, (ab)^{\ee (1)}, \ldots ,
(ab)^{\ee (n)} \, \bigr) =
\end{equation}
\[
\sum_{ \begin{array}{c}
{\scriptstyle \sigma \in \NCacfriendly (2n),} \\
{\scriptstyle such \ that}    \\
{\scriptstyle \oddtuple ( \sigma ) = \ee '}
\end{array} } 
\ \Bigl(   \prod_{ \begin{array}{c}
{\scriptstyle U \in \sigma, \ with}  \\
{\scriptstyle \calt_{\sigma} (U) = 1} 
\end{array} } \
\beta_{|U|} (b) \Bigr)  \cdot
\Bigl(   \prod_{ \begin{array}{c}
{\scriptstyle V \in \sigma , \ with}  \\
{\scriptstyle \calt_{\sigma} (V) = 2} 
\end{array} } \ \beta_{|V|} (a) \Bigr). 
\]
\end{theorem}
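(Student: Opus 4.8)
The plan is to run the three-step strategy of (\ref{eqn:11b}), now with Boolean rather than free cumulants. Fix $\ee$ with $\ee(1)=1$. Since $a,b$ are selfadjoint, $(ab)^{*}=ba$, so $\beta_n\bigl((ab)^{\ee(1)},\ldots,(ab)^{\ee(n)}\bigr)=\beta_n(x_1x_2,\,x_3x_4,\,\ldots,\,x_{2n-1}x_{2n})$, where $x_1x_2\cdots x_{2n}$ is the word in $\{a,b\}$ with $(x_{2i-1},x_{2i})=(a,b)$ if $\ee(i)=1$ and $(x_{2i-1},x_{2i})=(b,a)$ if $\ee(i)=*$. Let $c_{\ee}\colon\{1,\ldots,2n\}\to\{1,2\}$ be the colouring recording $x_j$ (colour $1$ for $a$, colour $2$ for $b$); then $c_{\ee}(2i-1)\neq c_{\ee}(2i)$ for every $i$, and $c_{\ee}(1)=1$. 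Let $\cA_1,\cA_2\subseteq\cA$ be the unital subalgebras generated by $a$ and by $b$, which are free.

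Step 1: apply the formula that expands a Boolean cumulant with products as entries into a sum of products of Boolean cumulants of the individual letters. This rewrites $\beta_n(x_1x_2,\ldots,x_{2n-1}x_{2n})$ as $\sum_{\pi}\prod_{V\in\pi}\beta_{|V|}\bigl((x_1,\ldots,x_{2n})\mid V\bigr)$, the sum running over the interval partitions $\pi$ of $\{1,\ldots,2n\}$ whose join with $\{\{1,2\},\ldots,\{2n-1,2n\}\}$ is $\hat 1_{2n}$ --- equivalently, those whose block boundaries all fall strictly inside some pair $\{2i-1,2i\}$. Step 2: feed each factor $\beta_{|V|}\bigl((x_1,\ldots,x_{2n})\mid V\bigr)$, whose arguments lie in $\cA_1\cup\cA_2$ as prescribed by $c_{\ee}$, into the implication $(1)\Rightarrow(2)$ of Theorem \ref{thm:141}, turning it into a sum over partitions $\rho_V\in NC(V;c_{\ee}|_V)$ that have a \emph{unique} outer block and have VNRP with respect to $c_{\ee}|_V$. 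Multiplying out, $\beta_n(x_1x_2,\ldots)$ becomes $\sum_{\sigma}\prod_{W\in\sigma}\beta_{|W|}\bigl((x_1,\ldots,x_{2n})\mid W\bigr)$, with $\sigma$ ranging over all $\sigma=\bigsqcup_{V\in\pi}\rho_V$ built from a $\pi$ as in Step 1 and a legal choice of the $\rho_V$. Since the blocks of $\pi$ are intervals, each such $\sigma$ lies in $NC(2n)$, its outer blocks are exactly the unique outer blocks of the $\rho_V$ (one per block of $\pi$), and $c_{\ee}$ is constant on the blocks of $\sigma$.

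Step 3, the combinatorial heart, is to recognise this indexing set --- with its attached colouring $c_{\ee}$ --- as $\{\sigma\in\NCacfriendly(2n):\oddtuple(\sigma)=\ee\}$, with $c_{\ee}=\calt_{\sigma}$. The correspondence $\sigma\mapsto(\pi,(\rho_V)_V)$ is invertible: from $\sigma$ one recovers $\pi$ as the interval partition whose blocks are the spans $[\min O,\max O]$ of the outer blocks $O$ of $\sigma$ (these spans tile $\{1,\ldots,2n\}$), and $\rho_V=\sigma|_V$; so it suffices to decide for which $\sigma$ this data is legal. One checks: (i) all boundaries of $\pi$ lie inside pairs $\iff$ every $\max O$ lies in $\{1,3,\ldots,2n-1\}\cup\{2n\}$, i.e.\ $\OuterMax(\sigma)\subseteq\{1,3,\ldots,2n-1\}\cup\{2n\}$, which is (AC-Friendly1); (ii) ``unique outer block plus VNRP'' forces $c_{\ee}|_V$ to alternate with depth inside each $V$, so, as there are only two colours, the constraint $c_{\ee}(2i-1)\neq c_{\ee}(2i)$ becomes $\depth_{\sigma}(2i-1)\neq\depth_{\sigma}(2i)$ when $2i-1,2i$ lie in the same block of $\pi$ --- this is (AC-Friendly2) --- and becomes ``$O$ and $O'$ receive opposite colours'' when $2i-1=\max O$ and $2i=\min O'$ for consecutive outer blocks $O,O'$; (iii) this alternation of outer-block colours, combined with $c_{\ee}(1)=1$ and with the within-$V$ depth-alternation, forces $c_{\ee}$ to satisfy exactly (C-Alt1)--(C-Alt3), i.e.\ $c_{\ee}=\calt_{\sigma}$, whence $\ee=\oddtuple(\sigma)$. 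Conversely any $\sigma\in\NCacfriendly(2n)$ with $\oddtuple(\sigma)=\ee$ meets all these requirements --- here one uses that depths and parent-blocks computed inside an interval $V$ coincide with those computed in $\sigma$ --- so it does occur. Finally, each block $W$ of such a $\sigma$ is monochromatic, so $\beta_{|W|}\bigl((x_1,\ldots,x_{2n})\mid W\bigr)$ equals $\beta_{|W|}(a)$ when $\calt_{\sigma}(W)=1$ and $\beta_{|W|}(b)$ when $\calt_{\sigma}(W)=2$; this is exactly (\ref{eqn:153a}). Part (2) then follows from part (1): since $a,b$ are selfadjoint one has $(ab)^{\ee(i)}=(ba)^{\ee'(i)}$ for every $i$, so the left-hand side of (\ref{eqn:153b}) equals $\beta_n\bigl((ba)^{\ee'(1)},\ldots,(ba)^{\ee'(n)}\bigr)$ with $\ee'(1)=1$, and applying part (1) to the free pair $(b,a)$ (i.e.\ with the roles of $a$ and $b$ interchanged) and to the tuple $\ee'$ yields (\ref{eqn:153b}).

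I expect Step 3 to be the main obstacle. The bijection is easy to describe, but making it airtight requires careful bookkeeping of outer blocks, depths and parent-blocks under restriction to an interval of $\pi$, a clean verification that (AC-Friendly1)--(AC-Friendly2) are \emph{exactly} the two conditions distilled in (i)--(ii), and a check that the colour data is rigid enough to pin down $\calt_{\sigma}$ (and hence to match the reading-off performed by $\oddtuple(\sigma)$) --- with due attention to the degenerate cases of the first and last intervals of $\pi$ and of $\pi=\hat 1_{2n}$.
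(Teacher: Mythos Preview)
Your proposal is correct and follows essentially the same three-step strategy as the paper's proof (which appears as Theorem~5.6): expand via the Boolean ``products as arguments'' formula, apply Theorem~\ref{thm:141} blockwise, and then identify the resulting index set with $\{\sigma\in\NCacfriendly(2n):\oddtuple(\sigma)=\ee\}$ carrying its colouring $\calt_\sigma$. Your handling of part~(2) by swapping $a$ and $b$ is exactly the ``mirror image'' the paper alludes to, and your Step~3 is in fact somewhat more explicit than the paper's (you spell out the bijection $\sigma\leftrightarrow(\pi,(\rho_V)_V)$ and the converse direction, whereas the paper argues one direction and leaves the rest implicit).
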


$\ $

By summing over $\ee \in \{ 1,* \}^n$ in Theorem \ref{thm:153},
we arrive to a formula for the Boolean cumulants of a free
anticommutator.

$\ $

\begin{theorem}    \label{thm:154}
Consider the same framework and notation as in 
Theorem \ref{thm:153}.  Then, for every $n \in \bN$, the 
$n$-th Boolean cumulant of $ab + ba$ is 
\[
\beta_n (ab+ba) = 
\sum_{\sigma \in \NCacfriendly (2n)} \ 
\Bigl( \prod_{   \begin{array}{c}
{\scriptstyle U \in \sigma ,   }  \\
{\scriptstyle \calt_{\sigma} (U) = 1} 
\end{array} } \ \beta_{|U|} (a) \Bigr)  \cdot
\Bigl( \prod_{   \begin{array}{c}
{\scriptstyle V \in \sigma ,   }  \\
{\scriptstyle \calt_{\sigma} (V) = 2} 
\end{array} } \ \beta_{|V|} (b) \Bigr) 
\] 
\begin{equation}   \label{eqn:154a}
+ \ \sum_{\sigma \in \NCacfriendly (2n)} \ 
\Bigl( \prod_{   \begin{array}{c}
{\scriptstyle U \in \sigma ,   }  \\
{\scriptstyle \calt_{\sigma} (U) = 1} 
\end{array} } \ \beta_{|U|} (b) \Bigr)  \cdot
\Bigl( \prod_{   \begin{array}{c}
{\scriptstyle V \in \sigma ,   }  \\
{\scriptstyle \calt_{\sigma} (V) = 2} 
\end{array} } \ \beta_{|V|} (a) \Bigr) .
\end{equation}
\end{theorem}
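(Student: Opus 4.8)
The plan is to deduce Theorem~\ref{thm:154} from Theorem~\ref{thm:153} by a short bookkeeping argument, with no new combinatorics. The first observation is that, since $a$ and $b$ are selfadjoint, one has $(ab)^{*} = b^{*}a^{*} = ba$, so that the free anticommutator equals $ab + ba = ab + (ab)^{*}$. Writing each of the $n$ arguments of $\beta_n(ab+ba)$ in the form $(ab) + (ab)^{*}$ and invoking the multilinearity of the Boolean cumulant functional $\beta_n$, one obtains
\[
\beta_n (ab+ba) \ = \ \sum_{\ee \in \{1,*\}^n}
\beta_n \bigl( (ab)^{\ee(1)}, \ldots , (ab)^{\ee(n)} \bigr) .
\]
I would then split this sum into the $2^{n-1}$ terms indexed by tuples with $\ee(1) = 1$ and the $2^{n-1}$ terms indexed by tuples with $\ee(1) = *$, and handle the two groups by invoking parts (1) and (2) of Theorem~\ref{thm:153} respectively.

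For the first group, part (1) of Theorem~\ref{thm:153} rewrites each term as a sum over those $\sigma \in \NCacfriendly(2n)$ with $\oddtuple(\sigma) = \ee$, with summand $\bigl( \prod_{\calt_{\sigma}(U)=1} \beta_{|U|}(a) \bigr) \cdot \bigl( \prod_{\calt_{\sigma}(V)=2} \beta_{|V|}(b) \bigr)$. The point to record here is that $\oddtuple(\sigma)$ always has first entry equal to $1$ — this is immediate from the definition of $\oddtuple$ together with condition (C-Alt1), which forces the block of $\sigma$ containing the number $1$ (an odd position) to receive colour $1$. Consequently, as $\ee$ runs over $\{1\} \times \{1,*\}^{n-1}$, the fibers $\{ \sigma \in \NCacfriendly(2n) \mid \oddtuple(\sigma) = \ee \}$ form a partition of $\NCacfriendly(2n)$ (some of them possibly empty), and the resulting double sum collapses to $\sum_{\sigma \in \NCacfriendly(2n)} \bigl( \prod_{\calt_{\sigma}(U)=1} \beta_{|U|}(a) \bigr) \cdot \bigl( \prod_{\calt_{\sigma}(V)=2} \beta_{|V|}(b) \bigr)$, which is precisely the first summand on the right-hand side of (\ref{eqn:154a}).

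For the second group, I would apply part (2) of Theorem~\ref{thm:153}: for $\ee$ with $\ee(1) = *$, the term $\beta_n((ab)^{\ee(1)}, \ldots, (ab)^{\ee(n)})$ equals the sum over $\sigma \in \NCacfriendly(2n)$ with $\oddtuple(\sigma) = \ee'$ of $\bigl( \prod_{\calt_{\sigma}(U)=1} \beta_{|U|}(b) \bigr) \cdot \bigl( \prod_{\calt_{\sigma}(V)=2} \beta_{|V|}(a) \bigr)$, where $\ee'$ denotes the coordinatewise complement of $\ee$. Since $\ee \mapsto \ee'$ is a bijection from $\{*\} \times \{1,*\}^{n-1}$ onto $\{1\} \times \{1,*\}^{n-1}$, the same fiber-partition argument applies verbatim and shows that this second group of terms sums to $\sum_{\sigma \in \NCacfriendly(2n)} \bigl( \prod_{\calt_{\sigma}(U)=1} \beta_{|U|}(b) \bigr) \cdot \bigl( \prod_{\calt_{\sigma}(V)=2} \beta_{|V|}(a) \bigr)$, the second summand in (\ref{eqn:154a}). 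Adding the contributions of the two groups yields the asserted formula. There is essentially no obstacle in this proof: all the substance is contained in Theorem~\ref{thm:153}, and the only points needing (minor) care are the verification that $\oddtuple(\sigma)$ has first coordinate $1$ — so that the fibers over $\{1\}\times\{1,*\}^{n-1}$ really exhaust $\NCacfriendly(2n)$ — and the tracking of the complement bijection $\ee \leftrightarrow \ee'$ in the case $\ee(1) = *$.
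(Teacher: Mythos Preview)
Your proof is correct and follows essentially the same approach as the paper: expand $\beta_n(ab+ba)$ by multilinearity as a sum over $\ee\in\{1,*\}^n$, split according to whether $\ee(1)=1$ or $\ee(1)=*$, apply the two parts of Theorem~\ref{thm:153}, and then collapse each double sum using the fact that the fibers of $\oddtuple$ partition $\NCacfriendly(2n)$. Your explicit remark that $\oddtuple(\sigma)$ always has first coordinate $1$ by (C-Alt1) is a helpful clarification that the paper leaves slightly more implicit.
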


\begin{remark}   \label{rem:155}

(1) In Theorem \ref{thm:153} it should be noted that
if we make $\ee = (1,1, \ldots , 1)$, then what comes 
out is precisely the formula from \cite{BeNi2008} which
was reviewed in Equation (\ref{eqn:12a}).  A discussion 
of why this is the case appears in Remark \ref{rem:58} 
below.

(2) In Theorem \ref{thm:154}, we note that formula 
(\ref{eqn:154a}) simplifies a lot when $a$ and $b$ have 
the same distribution.  In 
this case, denoting by $( \lambda_n )_{n=1}^{\infty}$ 
the common sequence of Boolean cumulants of $a$ and of 
$b$, we find that the $n$-th Boolean cumulant of $ab+ba$ 
is 
\begin{equation}  \label{eqn:155a}
\beta_n (ab+ba) = 
2 \cdot \sum_{\sigma \in \NCacfriendly (2n)} \ 
\prod_{V \in \sigma} \lambda_{|V|}.
\end{equation} 
\end{remark}

\begin{remark}   \label{rem:156}
In order to put things into perspective, we give here some 
background on the past work around the problem of the free 
anticommutator.  A noteworthy fact to begin with is that 
this problem is vastly simplified when we make the additional 
hypothesis that $a$ and $b$ have symmetric distributions
(that is, $\varphi (a^{2n-1}) = 0 =\varphi (b^{2n-1})$ 
for all $n \in \bN$).  In this case, the nonselfadjoint 
element $ab \in \cA$ has a certain ``$R$-diagonal'' property 
(cf. Lecture 15 of \cite{NiSp2006}).  From here it follows 
in particular that $ab + ba$ and $i(ab-ba)$ have the same 
distribution (due to radial symmetry displayed by
$R$-diagonal elements); moreover, the common distribution 
of $ab + ba$ and $i(ab-ba)$ is tractable due to the very 
special form of joint free cumulants that an $R$-diagonal 
element and its adjoint are known to have.

It is interesting that the combinatorial study of $i(ab-ba)$ 
remains tractable even when we drop the assumption of 
$a$ and $b$ having symmetric distributions, because one can 
follow (cf. \cite{NiSp1998}) how terms cancel in the expansions 
of the free cumulants $\kappa_n ( i(ab-ba))$.  The situation 
is not at all the same concerning $ab + ba$, where there are 
no cancellations to be followed.  Here the expansions for 
moments or cumulants just create some large summations, and 
the combinatorial line of attack goes via precise 
identification of the combinatorial structures which appear 
as index sets for these large summations.

Another noteworthy possibility to be mentioned is 
via approaches that are plainly analytic in nature, and produce 
systems of equations which can in principle be used to 
calculate the Cauchy transform of $ab+ba$.  Such a system of 
equations is proposed in \cite{Va2003}.  Another possibility of 
proceeding on these lines is suggested by the linearization 
method championed in \cite{HeMaSp2018}.
\end{remark}

\vspace{10pt}

\subsection{Equations with $\boldmath{\eta}$-series.}

$\ $

\noindent
The possibility of approaching the distribution of $ab + ba$ 
via a system of equations in (not necessarily convergent)
power series can also be pursued in the framework of the 
present paper.  Here we use the generating power series
for Boolean cumulants, which are also known as 
{\em $\eta$-series}: for $a \in \cA$, we put 
\begin{equation}   \label{eqn:16a}
\eta_a (z) := \sum_{n=1}^{\infty} \beta_n (a) z^n
\in \bC [[z]].  
\end{equation}
In Section 6 of the paper we make a detailed analysis of the 
$\eta$-series $\eta_{ab+ba}$ of a free anticommutator, and 
we come up with a system of equations which, when solved, 
leads to the explicit determination of this $\eta$-series.  
Our derivation of this system of equations is combinatorial 
in nature, and is intimately related to the study of recursions 
satisfied by ac-friendly non-crossing partitions.  

The best way to describe our system of equations leading to 
$\eta_{ab+ba}$ is in a $2 \times 2$ matrix form, where we make 
use of some auxiliary power series 
$f_{a,a}, f_{a,a^{*}}, f_{a^{*},a}, f_{a^{*},a^{*}}$
grouped 
\footnote{It is convenient to have the entries of $F_a$ indexed 
by symbols $a$ and $a^{*}$, even though the intended use of $F_a$ 
is when $a$ is selfadjoint.  The rationale for this notation is 
given at the beginning of Section 6.1.}
in a matrix
\begin{equation}   \label{eqn:16b}
F_a  =\begin{bmatrix}
f_{a,a} & f_{a,a^*} \\
f_{a^*,a} & f_{a^*,a^*}
\end{bmatrix},
\end{equation}
and of some power series 
$f_{b,b}, \ldots , f_{b^{*},b^{*}}$ likewise grouped in a 
$2 \times 2$ matrix $F_b$.  For illustration, in this 
Introduction we present the special case when $a$ and $b$ 
have the same distribution.  In this case we only need to 
refer to the matrix $F_a$, and we have the theorem stated next.  
(In the case when $a$ and $b$ are not required to have the 
same distribution, we get a more involved system of equations, 
where we use both matrices $F_a$ and $F_b$.  This is described 
in Theorem \ref{thm:61} below.)

\begin{theorem}   \label{thm:161}
Let $( \cA , \varphi )$ be a $*$-probability space, and let
$a,b$ be selfadjoint elements of $\cA$ such that $a$ is free
from $b$ and such that $a,b$ have the same distribution.

(1) The matrix $F_a$ from Equation (\ref{eqn:16b}) is obtained 
by solving the matrix equation 
\begin{equation}   \label{eqn:161a}
F_a H_a = \eta_a (z H_a),
\end{equation}
where $\eta_a$ is the $\eta$-series of $a$ (as in (\ref{eqn:16a}), and
\begin{align*}
H_a & := \begin{bmatrix}
f_{a^*,a^*} (1-f_{a,a^*})^{-1} & f_{a^*,a}+f_{a^*,a^*} (1-f_{a,a^*})^{-1} f_{a,a}\\ 
(1-f_{a,a^*})^{-1} & (1-f_{a,a^*})^{-1} f_{a,a} 
\end{bmatrix}.
\end{align*}

(2) The $\eta$-series of $ab+ba$ can be obtained from the entries
of $F_a$ via the equation
\begin{equation}   \label{eqn:161b}
\eta_{ab+ba}(z^2) = 2 \bigl( f_{a,a^*}(z)+
\frac{f_{a,a}(z)f_{a^*,a^*}(z)}{1-f_{a^*,a}(z)} \bigr).
\end{equation}
\end{theorem}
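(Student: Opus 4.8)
\section*{Proof proposal for Theorem \ref{thm:161}}

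The plan is to obtain Theorem~\ref{thm:161} as a generating-function repackaging of the combinatorial formula for $\beta_n(ab+ba)$ established in Theorem~\ref{thm:154}, after unfolding that formula into a recursion on anticommutator-friendly partitions. The starting point is to define the four power series entering $F_a$ as enumerations of ``irreducible'' ac-friendly building blocks: roughly, $f_{x,y}(z)$ should be $\sum_{n \geq 1} z^n$ times the sum, over those $\sigma \in \NCacfriendly(2n)$ which consist of a single outer block (so that $\calt_{\sigma}$ simply follows the parity of the depth), of the weight $\prod_{U \in \sigma, \, \calt_{\sigma}(U)=1} \beta_{|U|}(a) \cdot \prod_{V \in \sigma, \, \calt_{\sigma}(V)=2} \beta_{|V|}(b)$, with the two subscripts $x,y \in \{a, a^{*}\}$ recording the reading-type (``$ab$'' versus ``$ba$'', in the sense of $\oddtuple$) at the first and at the last odd positions of $\sigma$. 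Because $a$ and $b$ have the same distribution, interchanging their roles in these weights has a controlled effect; this is the redundancy the footnote to (\ref{eqn:16b}) alludes to, and it is what will eventually let the two sums in (\ref{eqn:154a}) collapse into the single factor of $2$ in (\ref{eqn:161b}).

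First I would set up the top-level decomposition. Every $\sigma \in \NCacfriendly(2n)$ is a left-to-right concatenation of its outer blocks together with the material nested under them; since Boolean cumulants are multiplicative along the nesting forest and $\calt_{\sigma}$ alternates between consecutive outer blocks (condition (C-Alt2)) while following vertical alternance below each of them (condition (C-Alt3)), the weight of $\sigma$ factors as a product over these top-level pieces. Condition (AC-Friendly1) constrains where an outer block is allowed to end and condition (AC-Friendly2) constrains the depth parities at the odd positions that are not block-maxima; tracking these constraints as one passes from one top-level piece to the next produces a transfer-matrix description, and summing the resulting geometric series over all lengths of the chain yields a rational expression in $f_{a,a}, f_{a,a^{*}}, f_{a^{*},a}, f_{a^{*},a^{*}}$. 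Carrying this out while keeping the colour of the block containing $1$ pinned to $1$ by (C-Alt1) is exactly what produces formula (\ref{eqn:161b}), the factor of $2$ absorbing the $a \leftrightarrow b$ symmetry of the two summands of (\ref{eqn:154a}).

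Next I would prove the fixed-point equation (\ref{eqn:161a}) by analysing a single irreducible ac-friendly block. Peeling off its unique outer block $W$, of size $k$ say, the weight acquires a factor $\beta_k$ of the variable coloured $1$ by $\calt_{\sigma}$ (which equals $\beta_k(a)$ under the equal-distribution hypothesis), and the remaining material sits in the $k$ gaps determined by $W$. Inside each gap one again finds a chain of ac-friendly structures, but now condition (AC-Friendly2) forces the colour to stay fixed until one descends a level -- producing a geometric series $(1-f_{a,a^{*}})^{-1}$ -- and then to flip upon descent. Encoding ``stay, or flip and descend'' as the $2 \times 2$ matrix $H_a$, the contribution of the $k$ gaps becomes, up to the entry/exit bookkeeping, the $k$-th power $H_a^{\,k}$; summing over $k$ against the weight $z^k \beta_k$ turns the self-similar recursion into $F_a H_a = \sum_{k \geq 1} \beta_k(a)\,(z H_a)^k = \eta_a(z H_a)$, which is (\ref{eqn:161a}). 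Matching the block pattern that the depth-parity condition imposes against the prescribed entries of $H_a$ -- in particular obtaining the correct off-diagonal entries and the exact placement of $(1-f_{a,a^{*}})^{-1}$ -- is the technical heart of the argument.

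I expect the main obstacle to be precisely this last matching: translating the rigid combinatorial condition (AC-Friendly2) into the algebraic shape of $H_a$, while simultaneously keeping the $\oddtuple$/colour bookkeeping consistent so that the ``entry type'' and ``exit type'' indices of $F_a$ line up with the symbols $(ab)^{\ee(i)}$ at the relevant odd positions. A secondary but genuine difficulty is to make sure that the reduction from the two terms of (\ref{eqn:154a}) to the single factor of $2$ in (\ref{eqn:161b}) is legitimate -- that is, that under the equal-distribution hypothesis the $a \leftrightarrow b$ swap really identifies the two summations once they have been re-expressed through the entries of $F_a$. Once these points are settled, parts (1) and (2) follow by reading off, respectively, the self-similar recursion for irreducible blocks and the top-level chain sum over all of $\NCacfriendly(2n)$.
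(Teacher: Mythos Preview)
Your proposal is correct in spirit and tracks the paper's argument closely, but the setup differs in a way worth flagging. In the paper the series $f_{l,l'}$ are \emph{not} defined combinatorially as sums over single-outer-block partitions in $\NCacfriendly(2n)$; they are defined as restrictions of the joint $\eta$-series of $a,a^*,b,b^*$ --- that is, as generating series of joint Boolean cumulants $\beta_m(c_1,\ldots,c_m)$ (each $c_j \in \{a,a^*,b,b^*\}$) that start with $l$, end with $l'$, and whose word $z_{c_1}\cdots z_{c_m}$ occurs as a subword of some $(z_az_b)^{\ee(1)}\cdots(z_az_b)^{\ee(n)}$. For instance $f_{a,a} = \beta_1(a)\,z + \beta_3(a,b,a)\,z^3 + \cdots$ is an odd-degree series, which your $\NCacfriendly(2n)$-based definition cannot reproduce directly. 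The fixed-point equation (\ref{eqn:161a}) then comes from applying the VNRP description of freeness (Theorem~\ref{thm:141}) to expand each such $\beta_m$ and sorting by the shape of its unique outer block; the pocket-by-pocket analysis that fills in the entries of $H_a$ is exactly the case check you anticipate, carried out at the level of these cumulants rather than at the level of $\NCacfriendly$. Formula (\ref{eqn:161b}) arises from a separate step: apply the products formula (Proposition~\ref{prop:210}) to $\beta_n((ab)^{\ee(1)},\ldots,(ab)^{\ee(n)})$, prune via Remark~\ref{rem:52}, and sum the resulting chain geometrically --- this yields the matrix identity (\ref{eqeta}), which under the equal-distribution hypothesis collapses to (\ref{eqn:161b}).

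Your route --- starting from Theorem~\ref{thm:154} and treating the $f$'s as generating series for ``irreducible ac-friendly pieces'' --- can be made to work, but the pieces you need are not themselves elements of $\NCacfriendly(2n)$: they are the sub-partitions sitting under a single outer block of an ac-friendly $\sigma$, restricted to the interval that block spans (an interval of either parity, carrying a single-outer-block VNRP structure for the restricted colouring $\calt_\sigma$). Once the $f$'s are recast in that way, your recursion and the paper's coincide. The paper also first proves the general distinct-distribution statement (Theorem~\ref{thm:61}) and only then specializes; your direct attack on the equal-distribution case trades that generality for brevity.
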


\begin{remark}  \label{rem:162}
Very much in agreement with the discussion at the beginning of 
Remark \ref{rem:156}, the study of free anticommutators via 
equations in $\eta$-series also simplifies substantially 
in the case when $a$ and $b$ have symmetric distributions.  
In this case the matrices $F_a$ and $F_b$ mentioned above are 
sure to have some vanishing entries 
($f_{a,a} = f_{a^{*}, a^{*}} = 0$ and 
$f_{b,b} = f_{b^{*}, b^{*}} = 0$), and the systems of equations
that have to be solved become simpler, as shown in Proposition 
\ref{prop:64} and Corollary \ref{cor:65}.  

While the examples of free anticommutators of symmetric distributions 
are covered by the methods from \cite{NiSp1998}, it is 
nevertheless interesting to work out some examples of this kind
and combine them with a use ``in reverse'' of Theorem \ref{thm:154},
in order to obtain corollaries about the enumeration of ac-friendly
non-crossing partitions.  For instance, in order to count the 
non-crossing partitions $\sigma \in \NCacfriendly (2n)$ with the 
property that all blocks $V \in \sigma$ have even cardinality, one uses
elements $a,b \in \cA$ which are freely independent and have distribution 
$\frac{1}{4} ( \delta_{- \sqrt{2}} + \delta_{\sqrt{2}} ) 
+ \frac{1}{2} \delta_0$.
The reason for choosing the latter distribution is that the common
sequence $( \lambda_n )_{n=1}^{\infty}$ of Boolean cumulants for $a$ 
and $b$ simply has $\lambda_n = 1$ for $n$ even and $\lambda_n = 0$ 
for $n$ odd.  In view of Remark \ref{rem:155}(2), the Boolean cumulant 
$\beta_n (ab+ba)$ is then equal to twice the cardinality we are
interested to determine.  Upon combining this with the explicit 
formula obtained for $\eta_{ab+ba}$, we can determine
precisely what is the required cardinality, as explained in 
Example \ref{ex:69} and Corollary \ref{cor:610} below.
\end{remark}

\begin{example}   \label{ex:163}
In the framework of Theorem \ref{thm:161}, it is instructive
to consider the simplest possible non-symmetric example, where 
both $a$ and $b$ have distribution 
$\frac{1}{2} ( \delta_0 + \delta_2 )$.

\begin{center}
\includegraphics[width=0.8\textwidth]{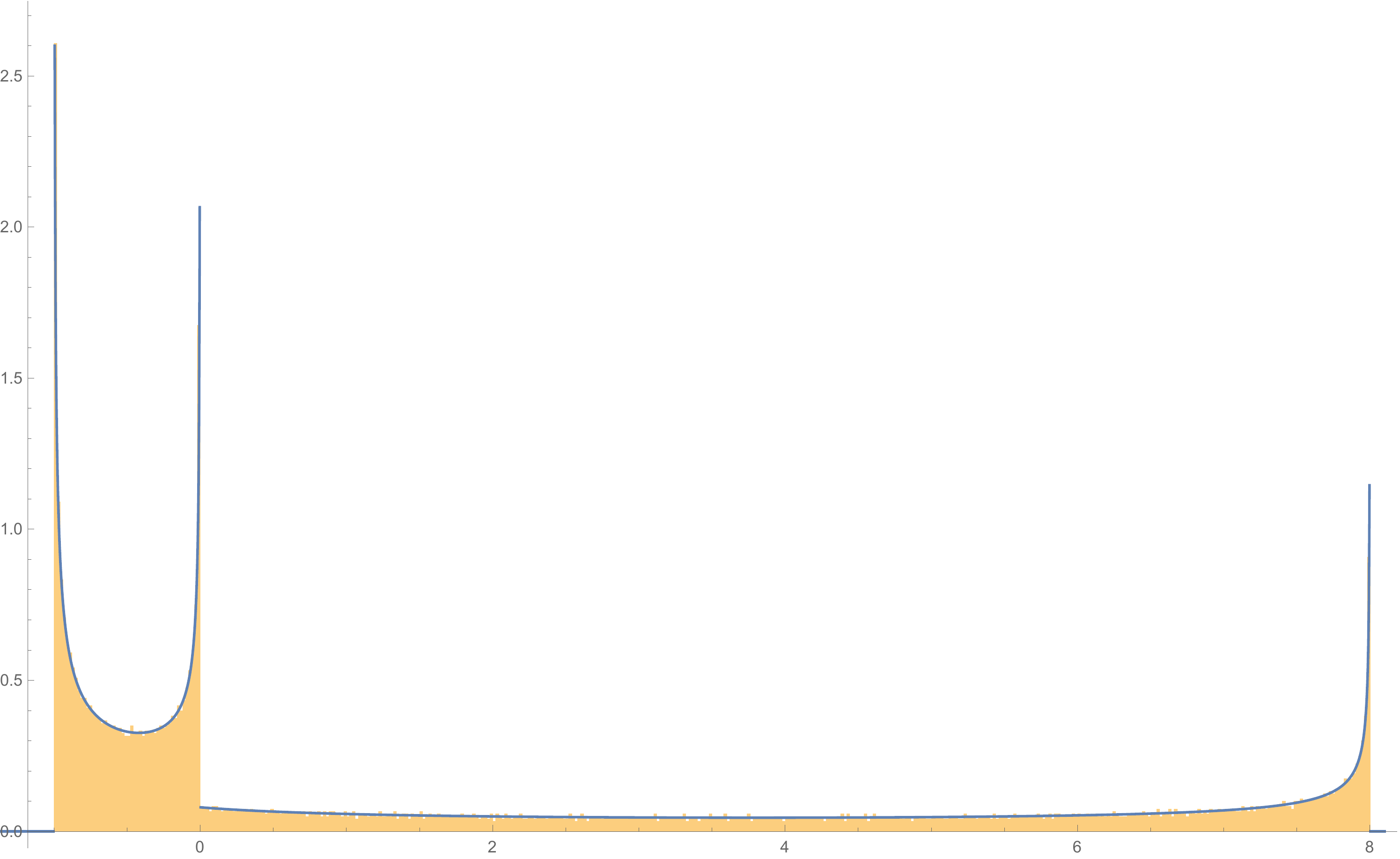}

{\bf Figure 2.}
{\em Plot of the density of distribution of $ab+ba$ 

for $a,b$ free and having distribution 
$\tfrac{1}{2}\delta_0+\tfrac{1}{2}\delta_2$,
together with

a histogram of eigenvalues of random matrix approximation.}
\end{center}

$\ $

The matrix equation from Theorem \ref{thm:161} is easy to 
solve in this example, and we end with an explicit formula for 
the $\eta$--series $\eta_{ab+ba} (z)$.  This can be followed 
with a calculation of Cauchy transform and with a Stieltjes 
inversion, in order to concretely determine what is the law 
of $ab+ba$ -- we find an absolutely continuous distribution 
supported on the interval $[-1,8]$.  The explicit formula for 
the density of this distribution and the calculations leading 
to it are presented in Proposition \ref{prop:611} below.  

Figure 2 shows the graph of the density $f(x)$ found 
for the law of $ab+ba$.  For a check, Figure 2 also shows a 
histogram of empirical eigenvalues distribution for $AB+BA$ 
where $A$ is a diagonal $6000\times 6000$ 
matrix with half of diagonal entries equal $0$ and half equal 2, 
and $B=UAU^*$ where $U$ is a random unitary matrix.

This example offers a very good illustration of how one gets 
to have different distributions for the free commutator and 
anticommutator -- indeed, the law of the commutator $i(ab-ba)$ 
is easily found to be the arcsine distribution on $[-2,2]$ 
(cf. Example \ref{ex:66}, and the discussion in the paragraph 
preceding Proposition \ref{prop:611}).
We point out that this example has a combinatorial significance 
as well, and can be used (cf. Corollary \ref{cor:612}) to infer 
the formula indicated in Equation (\ref{eqn:15a}) for the 
generating series of cardinalities of sets $NC_{ac-friendly} (2n)$.
\end{example}

\vspace{10pt}

\subsection{Organization of the paper.}

$\ $

\noindent
Besides the present Introduction, we have five other 
sections.  After a review of background in Section 2, we 
discuss VNRP and prove Theorem \ref{thm:131} in 
Section 3.  In Section 4 we discuss the applications
of VNRP to free independence via Boolean cumulants.
In Section 5 we prove the results about the joint 
Boolean cumulants of $ab$ and $(ab)^{*}$ and about the
Boolean cumulants of the free anticommutator which were 
advertised in Section 1.5 above.  Finally, in Section 6 
we consider the conversion from Boolean cumulants to 
$\eta$-series, and prove the results that were advertised
in Section 1.6 above.

$\ $

\section{Background and Notation}

In this section we review some background on set-partitions, and 
the two types of cumulants we want to work with.

\vspace{6pt}

\subsection{Nestings and depths for blocks of a non-crossing partition.}

$\ $

\noindent
We start by reviewing, for the sake of setting notation, the definition 
of the two basic types of set-partitions used in this paper, the 
{\em non-crossing partitions} and the {\em interval partitions}.

\begin{definition}   \label{def:21}
(1) Let $n$ be a positive integer and let 
$\pi = \{ V_1 , \ldots , V_k \}$ be a partition of 
$\{ 1, \ldots ,n \}$; that is, $V_1 , \ldots , V_k$ are non-empty
pairwise disjoint sets (called the {\em blocks} of $\pi$) with
$V_1 \cup \cdots \cup V_k$ = $\{ 1, \ldots , n \}$.  
The number $k$ of blocks of $\pi$ will be denoted as $| \pi |$,
and we will occasionally use the notation ``$V \in \pi$'' to mean
that $V$ is one of $V_1, \ldots , V_k$.

We say that $\pi \in NC(n)$ is an {\em interval partition} to 
mean that every block $V$ of $\pi$ is of the form
$V = [i,j] \cap \bN$ for some $1 \leq i \leq j \leq n$. 

We say that $\pi$ is a {\em non-crossing partition} to mean that 
for every $1 \leq i_1 < i_2 < i_3 < i_4 \leq n$ such that $i_1$ 
is in the same block with $i_3$ and $i_2$ is in the same block 
with $i_4$, it necessarily follows that all of $i_1, \ldots , i_4$ 
are in the same block of $\pi$.  

\vspace{6pt}

(2) For every $n \in \bN$, we denote by $\Int (n)$ the set of all
interval partitions of $\{ 1, \ldots , n \}$, and we denote by 
$NC(n)$ the set of all non-crossing partitions of $\{ 1, \ldots , n \}$.  
\end{definition}

\begin{remark}   \label{rem:21}
Clearly, one has $\Int (n) \subseteq NC(n)$ for all
$n \in \bN$.  It is not hard to see that 
$| \Int (n) | = 2^{n-1}$ and that $NC(n)$ is counted by the 
$n$-th Catalan number:
\[
| NC(n) | = \Cat_n := \frac{ (2n)! }{ n! (n+1)!}, \ \ \forall 
\, n \in \bN .
\]
For a more detailed introduction to the $NC(n)$'s, one can 
for instance consult Lectures 9 and 10 of \cite{NiSp2006}. 
\end{remark}

\vspace{10pt}

Given a non-crossing partition $\pi \in NC(n)$, it is convenient 
to formalize the notion of ``relative nesting'' for blocks of 
$\pi$, as follows.

\begin{notation-and-remark}    \label{def:22}
Let $n$ be in $\bN$ and let $\pi$ be a partition in $NC(n)$. 

\vspace{6pt}

(1) Let $V, W$ be blocks of $\pi$.  We will write ``$V \leqnest W$'' 
to mean that we have the inequalities
\[
\min (W) \leq  \min (V) \mbox{ and } \max (W) \geq \max (V).
\]
We will write ``$V \lnest W$'' 
to mean that $V \leqnest W$ and $V \neq W$.  We will occasionally 
also use the notations $W \geqnest V$ instead of $V \leqnest W$
and $W \gnest V$ instead of $V \lnest W$.

\vspace{6pt}

(2) It is immediate that ``$\leqnest$'' is a partial order relation 
on the set of blocks of $\pi$.  
A block $W \in \pi$ which is maximal with respect to 
$\leqnest$ will be said to be an {\em outer block}.  A block $V \in \pi$ 
which is not outer will be said to be an {\em inner block}.
\end{notation-and-remark}

\begin{remark-and-definition}    \label{rem:23}
Let $n$ be in $\bN$, let $\pi$ be a partition in $NC(n)$, and let $V$ 
be a block of $\pi$.  It is easy to check that the set 
$\{ W \in \pi \mid W \geqnest V \}$ is totally ordered by $\leqnest$.  
That is, we can write 
\begin{equation}   \label{eqn:23a}
\{ W \in \pi \mid W \geqnest V \} = \{ V_1, \ldots , V_k \}
\end{equation}
where $k \geq 1$ and $V_1 \lnest V_2 \lnest \cdots \lnest V_k$.  In 
(\ref{eqn:23a}) we note, in particular, that $V_1 = V$ and that $V_k$ 
is an outer block.  
The {\em depth} of $V$ in $\pi$ is defined as 
\[
\depth_{\pi} (V) := k-1 ,
\]
with $k$ picked from Equation (\ref{eqn:23a}).
If $k \geq 2$ (which is equivalent to saying that 
$\depth_{\pi} (V) \neq 0$, or that $V$ is an inner block), then the 
block $V_2$ appearing in (\ref{eqn:23a}) is called the 
{\em parent-block for $V$}, and will be denoted as $\Parent_{\pi} (V)$.  
The parent-block could be equivalently introduced via the requirement that
\[
\left\{  \begin{array}{cl}
\mbox{(i)}    &  V \lnest \Parent_{\pi} (V) , \mbox{ and }  \\
\mbox{(ii)}   & \mbox{There is no block $V' \in \pi$ such 
                that $V \lnest V' \lnest \Parent_{\pi} (V)$.}
\end{array}   \right.
\]
\end{remark-and-definition}

\begin{remark}    \label{rem:24}
Let $n$ be in $\bN$ and let $\pi$ be a partition in $NC(n)$.

(1) The notion of depth for the blocks of $\pi$ could also
be defined recursively, by postulating that outer blocks 
have depth $0$ and by making the requirement that
\[
\depth_{\pi} (V) = 1 +
\depth_{\pi} \bigl( \, \Parent_{\pi} (V) \, \bigr)
\ \mbox{ for every inner block $V \in \pi$.}
\]

(2) As mentioned in the Introduction, for an
$i \in \{ 1, \ldots , n \}$ we will sometimes write 
``$\depth_{\pi} (i)$'' in order to refer to the depth of 
the block of $\pi$ which contains the number $i$.  Thus 
$\depth_{\pi}$ can be viewed as a special example of 
colouring of $\pi$ (a function from $\{ 1, \ldots , n \}$ to
$\bZ$ which is constant along the blocks of $\pi$).
\end{remark}

\begin{remark}   \label{rem:25}
Let $n$ be in $\bN$ and let $\pi$ be a partition in $NC(n)$.
It is easy to see that one can always list the set of outer 
blocks of $\pi$ as $\{ W_1, \ldots , W_{\ell} \}$ in such 
a way that 
\begin{equation}   \label{eqn:25a}
\left\{  \begin{array}{l}
\min (W_1) = 1, \ \ \max (W_{\ell}) = n, \ \ \mbox{ and } \\
\min (W_{i+1} ) = 1 + \max (W_i) \ \ 
\mbox{ for every $1 \leq i < \ell$.}
\end{array}  \right.
\end{equation}
In the case when the number $\ell$ of outer blocks of $\pi$ is 
$\ell = 1$, the second condition in (\ref{eqn:25a}) is vacuous 
($\pi$ has a unique outer block $W$, with $1,n \in W$).

In the notation from (\ref{eqn:25a}): the interval partition 
\[
\overline{\pi} := \{ J_1, \ldots , J_{\ell} \}
\mbox{ with $J_i := [ \min (W_i) , \max (W_i) ] \cap \bN$, 
for $1 \leq i \leq \ell$}
\]
is sometimes called the {\em closure} of $\pi$ in the set of 
interval-partitions.
Note that knowing what is $\overline{\pi}$ provides exactly the 
same information as knowing the set $\OuterMax ( \pi )$ which 
was introduced in Definition \ref{def:151} of the Introduction.
\end{remark}

\vspace{10pt}

\subsection{Review of free and of Boolean cumulant functionals.}

$\ $

\noindent
Let $( \cA , \varphi )$ be a noncommutative probability space 
(in purely algebraic sense) -- that is, $\cA$ is a unital algebra 
over $\bC$ and $\varphi : \cA \to \bC$ a linear functional with 
$\varphi ( 1_{{ }_{\cA}} ) = 1$.  
In this subsection we briefly review the definition of 
the free and the Boolean cumulants of $( \cA , \varphi )$.
Before starting, we record a customary notation which will 
appear in the formulas for both types of cumulants:
given an $n \in \bN$, a tuple
$( a_1, \ldots , a_n ) \in \cA^n$, and a non-empty subset
$S = \{ i_1, \ldots , i_m \} \subseteq \{ 1, \ldots , n \}$
with $i_1 < \cdots < i_m$, we denote
\begin{equation}  \label{eqn:26a}
( a_1, \ldots , a_n ) \mid S 
:= ( a_{i_1}, \ldots , a_{i_m} ) \in \cA^m.
\end{equation}

$\ $

\begin{definition}   \label{def:26}
Notations as above.

\vspace{6pt}

(1) The {\em free cumulants} associated to $( \cA , \varphi )$ 
are the family of multilinear functionals 
$( \kappa_n : \cA^n \to \bC )_{n=1}^{\infty}$ which 
is uniquely determined by the requirement that
\begin{equation}   \label{eqn:26b}
\varphi (a_1 \cdots a_n)  = 
\sum_{\pi \in NC(n)} \prod_{V \in \pi} 
\kappa_{|V|} ( \, (a_1, \ldots , a_n) \mid V \, ), 
\end{equation}
holding for all $n \in \bN$ and $a_1, \ldots , a_n \in \cA$.

\vspace{6pt}

(2) The {\em Boolean cumulants} associated to $( \cA , \varphi )$ 
are the family of multilinear functionals 
$( \beta_n : \cA^n \to \bC )_{n=1}^{\infty}$ which 
is uniquely determined by the requirement that
\begin{equation}   \label{eqn:26c}
\varphi (a_1 \cdots a_n)  = 
\sum_{\pi \in \Int (n)} \prod_{V \in \pi} 
\beta_{|V|} ( \, (a_1, \ldots , a_n) \mid V \, ), 
\end{equation}
holding for all $n \in \bN$ and $a_1, \ldots , a_n \in \cA$.
\end{definition}

$\ $

\begin{remark}   \label{rem:27}
It is easy to see that the families of equations indicated
in either (\ref{eqn:26b}) or (\ref{eqn:26c}) have unique 
solutions.  Indeed, all that actually matters is that the 
index sets $\Int (n)$ and $NC(n)$ for the summations on the 
right-hand sides of these equations contain the partition,
usually denoted as ``$1_n$'', of the set $\{ 1, \ldots , n \}$
into only one block.  For instance in connection to 
(\ref{eqn:26b}): by separating the term indexed by $1_n$ on 
the right-hand side, this equation can be written as
\begin{equation}   \label{eqn:27a}
\kappa_n (a_1, \ldots , a_n) = 
\varphi (a_1, \cdots , a_n) -
\sum_{ \begin{array}{c}
{\scriptstyle \pi \in NC(n),}   \\
{\scriptstyle \pi \neq 1_n} 
\end{array} } \ \prod_{V \in \pi} 
\kappa_{|V|} ( \, (a_1, \ldots , a_n) \mid V \, ).
\end{equation}
Then (\ref{eqn:27a}) can be used as an explicit definition 
of the functional $\kappa_n$, under the assumption that explicit
formulas for $\kappa_1, \ldots , \kappa_{n-1}$ have already been 
determined.  A similar recursive argument holds in connection to 
solving the system of equations indicated in (\ref{eqn:26c}).

It is in fact not difficult to write in a really explicit 
way some formulas giving the $\kappa_n$'s and the 
$\beta_n$'s in terms of $\varphi$.  This is not needed in the 
present paper, so we only mention that the way to do it goes 
by using some standard elements of ``M\"obius inversion theory
in a partially ordered set'' (as presented e.g. in Chapter 3
of the monograph \cite{St1997}).  

There also is a nice direct formula which expresses Boolean 
cumulants in terms of free cumulants, as follows.
\end{remark}

\begin{proposition}   \label{prop:28}
Let $( \cA , \varphi )$ be a noncommutative probability space,
and let $( \kappa_n : \cA^n \to \bC )_{n=1}^{\infty}$ and
$( \beta_n : \cA^n \to \bC )_{n=1}^{\infty}$ be the free and
respectively the Boolean cumulants associated to 
$( \cA , \varphi )$.  For every $n \in \bN$ and 
$a_1, \ldots , a_n \in \cA$ one has
\begin{equation}   \label{eqn:28a}
\beta_n (a_1, \ldots , a_n) = 
\sum_{ \begin{array}{c}
{\scriptstyle \pi \in NC(n) \ with}   \\
{\scriptstyle unique \ outer \ block} 
\end{array} } \ \prod_{V \in \pi} 
\kappa_{|V|} ( \, (a_1, \ldots , a_n) \mid V \, ).
\end{equation}
\end{proposition}

The proof of Proposition \ref{prop:28} can e.g. be 
obtained by an immediate re-phrasing of the argument 
proving Proposition 3.9 in \cite{BeNi2008}.

\begin{notation-and-remark}   \label{def:27b}
Let $( \cA , \varphi )$ be a noncommutative probability space,
and let $( \kappa_n : \cA^n \to \bC )_{n=1}^{\infty}$ and
$( \beta_n : \cA^n \to \bC )_{n=1}^{\infty}$ be the free and
respectively the Boolean cumulants associated to $( \cA , \varphi )$.
Let $a \in \cA$ be given.  We will use the abbreviations
\[
\beta_n (a) := \beta_n (a, \ldots , a) \mbox{ and }
\kappa_n (a) := \kappa_n (a, \ldots , a), \ \ n \in \bN .
\]
The power series 
\[
M_a (z) = \sum_{n=1}^{\infty} \varphi (a^n) z^n, 
\ R_a (z) = \sum_{n=1}^{\infty} \kappa_n (a) z^n
\mbox{ and } 
\eta_a (z) = \sum_{n=1}^{\infty} \beta_n (a) z^n 
\]
are called the {\em moment series}, the {\em R-transform}
and respectively the {\em $\eta$-series} associated to $a$.

In this paper, an important role is played by $\eta$-series.
We note that, as an easy consequence of the formula (\ref{eqn:26c}) 
connecting moments to Boolean cumulants, one has a very simple 
relation between $\eta_a$ and $M_a$:
\[
M_a (z) = \eta_a (z) / ( 1 - \eta_a (z) ),
\mbox{ or equivalently, }
\eta_a (z) = M_a (z) / ( 1 + M_a (z) ).
\]
\end{notation-and-remark}

\vspace{10pt}

\subsection{Cumulants with products as arguments.}

$\ $

\noindent
When working with cumulants of any kind, it is good to have an
efficient formula for what happens when every argument of the 
cumulant is a product of elements of the underlying algebra.
For free cumulants, this formula was put into evidence in 
\cite{KrSp2000}.  We will need here the analogous fact for 
Boolean cumulants.  In order to state this fact and to explain 
the analogy with \cite{KrSp2000}, we need to use the lattice 
structure (with respect to the partial order by reverse refinement) 
on $NC(n)$ and on $\Int (n)$, so we first do a brief review of
this structure. 

$\ $

\begin{definition}   \label{def:29}
Let $n$ be a positive integer.  

\vspace{6pt}

(1) On $NC(n)$ we consider the partial order by 
{\em reverse refinement}, where for $\pi, \rho \in NC(n)$ we put
\begin{equation}  \label{eqn:29a}
( \pi \leq \rho )  \ \ecdef \ \Bigl( \mbox{
every block of $\rho$ is a union of blocks of $\pi$} \Bigr).  
\end{equation}
The partially ordered set $( NC(n), \leq )$ turns out to be a 
lattice.  That is, every $\pi_1 , \pi_2 \in NC(n)$ have a least 
common upper bound, denoted as $\pi_1 \vee \pi_2$,
and have a greatest common lower bound, denoted as 
$\pi_1 \wedge \pi_2$.  One refers to $\pi_1 \vee \pi_2$ and 
to $\pi_1 \wedge \pi_2$ as the {\em join} and respectively as the 
{\em meet} of $\pi_1$ and $\pi_2$ in $NC(n)$.

We will use the notation $0_n$ for the partition of 
$\{ 1, \ldots , n \}$ into $n$ singleton blocks and the notation 
$1_n$ for the partition of $\{ 1, \ldots , n \}$ into one block.
It is immediate that $0_n, 1_n \in NC(n)$ and that 
$0_n \leq \pi \leq 1_n$ for all $\pi \in NC(n)$.

\vspace{6pt}

(2) Consider the restriction of the partial order by reverse 
refinement from $NC(n)$ to $\Int (n)$.  
For $\pi_1, \pi_2 \in \Int (n)$, the partitions 
$\pi_1 \vee \pi_2, \pi_1 \wedge \pi_2 \in NC(n)$ 
which were defined in (1) above turn out to still belong to 
$\Int (n)$.  As a consequence, $( \Int (n) , \leq )$ is a 
lattice as well, and for $\pi_1 , \pi_2 \in \Int (n)$ there is 
no ambiguity in the meaning of what are
$\pi_1 \vee \pi_2$ and $\pi_1 \wedge \pi_2$ (considering the  
join and meet of $\pi_1$ and $\pi_2$ in $\Int (n)$ gives the 
same result as when considering them in $NC(n)$).

The special partitions $0_n$ and $1_n$ considered in (1) 
belong to $\Int (n)$, hence they also serve as minimum and
maximum elements for the poset $( \Int (n) , \leq )$.
\end{definition}

$\ $

The formula for Boolean cumulants with products as entries is 
then stated as follows.

\begin{proposition} \label{prop:210}
Let $( \cA , \varphi )$ be a noncommutative probability space,
and let $( \beta_n : \cA^n \to \bC )_{n=1}^{\infty}$ be the 
family of Boolean cumulant functionals of $( \cA , \varphi )$.  
Consider a Boolean cumulant of the form 
$\beta_m ( x_1, \ldots , x_m )$ where each of the elements 
$x_1, \ldots , x_m \in \cA$ is written as a product:
\[
x_1 = a_1 \cdots a_{i(1)}, 
x_2 = a_{i(1)+1} \cdots a_{i(2)}, \ldots , 
x_m = a_{i(m-1)+1} \cdots a_{i(m)}, 
\]
where $1 \leq i(1) < i(2) < \cdots < i(m) =:n$ are some positive 
integers, and where $a_1, \ldots , a_n \in \cA$.  Then one has
\begin{equation}   \label{eqn:210a}
\beta_m ( x_1, \ldots , x_m ) = 
\sum_{  \begin{array}{c}
{\scriptstyle \pi \in \Int (n) \ such} \\
{\scriptstyle that \ \pi \vee \sigma = 1_n} 
\end{array} \ }
\ \prod_{V \in \pi} \beta_{|V|} ( ( a_1, \ldots , a_n) \mid V),
\end{equation}
with 
\begin{equation}   \label{eqn:210b}
\sigma := \{ \, \{ 1, \ldots , i(1) \} , 
\, \{ i(1) + 1, \ldots , i(2) \} , \ldots ,
\{ i(m-1) + 1, \ldots , i(m) \} \, \} \in \Int (n). 
\end{equation}
\end{proposition}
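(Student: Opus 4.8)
The plan is to obtain \eqref{eqn:210a} by M\"obius inversion in the lattice $( \Int (n) , \leq )$, using the order isomorphism between $\Int (m)$ and the interval of partitions lying above $\sigma$ in $\Int (n)$. Throughout I will use the multiplicative shorthand
\[
\beta_{\pi} [ a_1, \ldots , a_n ] := \prod_{V \in \pi} \beta_{|V|} ( ( a_1, \ldots , a_n ) \mid V )
\quad \text{and} \quad
\varphi_{\pi} [ a_1, \ldots , a_n ] := \prod_{V \in \pi} \varphi \Bigl( \textstyle\prod_{i \in V} a_i \Bigr)
\]
for $\pi \in \Int (n)$, and likewise with $n$ replaced by $m$ and the $a$'s by the $x$'s. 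The first step is to record the preliminary facts that both $\varphi_{\pi}$ and $\beta_{\pi}$ are multiplicative over the blocks of an interval partition, so that the defining relation \eqref{eqn:26c} upgrades to $\varphi_{\rho}[a] = \sum_{\pi \in \Int (n),\, \pi \leq \rho} \beta_{\pi}[a]$ for every $\rho \in \Int (n)$; since $( \Int (n), \leq )$ is a finite lattice, M\"obius inversion then gives $\beta_{\rho}[a] = \sum_{\pi \leq \rho} \mu_{\Int (n)} ( \pi, \rho ) \, \varphi_{\pi}[a]$, and in particular $\beta_m ( x_1, \ldots , x_m ) = \beta_{1_m}[x] = \sum_{\tau \in \Int (m)} \mu_{\Int (m)} ( \tau, 1_m ) \, \varphi_{\tau}[x]$.

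The second step sets up the combinatorial bridge. For $\sigma \in \Int (n)$ as in \eqref{eqn:210b}, the operation of ``blowing up'' a block of an interval partition of $\{ 1, \ldots , m \}$ into the corresponding union of blocks of $\sigma$ is an order isomorphism from $\Int (m)$ onto $\{ \tau \in \Int (n) \mid \tau \geq \sigma \}$; write $\tau \mapsto \widehat{\tau}$ for it. Because each $x_j$ is a product of consecutive $a_i$'s, one has $\varphi ( \prod_{j \in W} x_j ) = \varphi ( \prod_{i \in \widehat{W}} a_i )$ for every block $W$ of a partition $\tau \in \Int (m)$, and hence $\varphi_{\tau}[x_1, \ldots , x_m] = \varphi_{\widehat{\tau}}[a_1, \ldots , a_n]$ for all $\tau \in \Int (m)$.

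The third step combines everything. Substituting $\varphi_{\tau}[x] = \varphi_{\widehat{\tau}}[a] = \sum_{\pi \in \Int (n),\, \pi \leq \widehat{\tau}} \beta_{\pi}[a]$ into the expression for $\beta_m ( x_1, \ldots , x_m )$ above and exchanging the order of summation, the coefficient of $\beta_{\pi}[a]$ becomes $\sum_{\tau \in \Int (m),\, \widehat{\tau} \geq \pi} \mu_{\Int (m)} ( \tau, 1_m )$. Since $\widehat{\tau} \geq \sigma$ always holds, the condition $\widehat{\tau} \geq \pi$ is equivalent to $\widehat{\tau} \geq \pi \vee \sigma$; and $\pi \vee \sigma$ is again an interval partition (Definition \ref{def:29}(2)) lying above $\sigma$, so $\pi \vee \sigma = \widehat{\tau_0}$ for a unique $\tau_0 \in \Int (m)$, and the condition becomes $\tau \geq \tau_0$. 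The sum $\sum_{\tau_0 \leq \tau \leq 1_m} \mu_{\Int (m)} ( \tau, 1_m )$ equals $\delta_{\tau_0, 1_m}$ by the defining property of the M\"obius function, and $\tau_0 = 1_m$ if and only if $\pi \vee \sigma = 1_n$. Thus only the partitions $\pi \in \Int (n)$ with $\pi \vee \sigma = 1_n$ survive, each with coefficient $1$, which is exactly \eqref{eqn:210a}.

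The genuinely substantive points, which I would write out in full, are the order isomorphism $\tau \mapsto \widehat{\tau}$ together with the identity $\varphi_{\tau}[x] = \varphi_{\widehat{\tau}}[a]$, and the equivalence $\widehat{\tau} \geq \pi \Leftrightarrow \widehat{\tau} \geq \pi \vee \sigma$; everything else is a formal manipulation of M\"obius functions. I expect the argument to be lighter than its free analogue from \cite{KrSp2000}: because $\sigma$ is already an interval partition and joins of interval partitions remain interval partitions, no Kreweras-type bookkeeping is needed and the index set in \eqref{eqn:210a} stays inside $\Int (n)$.
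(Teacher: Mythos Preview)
Your proposal is correct and is precisely the argument the paper has in mind: the paper leaves Proposition~\ref{prop:210} as an exercise, pointing to the M\"obius-inversion proof of the free analogue on pages 178--180 of \cite{NiSp2006} and asking the reader to replace $NC$ by $\Int$ throughout. Your three steps---M\"obius inversion of the moment--Boolean-cumulant relation, the order isomorphism $\Int(m)\cong[\sigma,1_n]\subseteq\Int(n)$ together with $\varphi_{\tau}[x]=\varphi_{\widehat{\tau}}[a]$, and the collapse of $\sum_{\tau\geq\tau_0}\mu_{\Int(m)}(\tau,1_m)$ to $\delta_{\tau_0,1_m}$---are exactly that adaptation, and your closing remark that no Kreweras-type bookkeeping is needed (because $\Int(n)$ is closed under joins) correctly identifies why the Boolean case is lighter than the free one.
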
 

\vspace{6pt}

The proof of Proposition \ref{prop:210} is left as an exercise 
to the reader.  A way to do it is by going over the development 
presented on pages 178-180 of \cite{NiSp2006} about free
cumulants with products as arguments, and by replacing everywhere 
on those pages the occurrences of lattices of non-crossing 
partitions by occurrences of lattices of interval partitions.  
The statement of Proposition \ref{prop:210} should then emerge 
as the Boolean analogue of Theorem 11.12(2) of \cite{NiSp2006}.

\vspace{6pt}

\begin{remark}   \label{rem:211}
The lattice $\Int (n)$ is in fact a Boolean lattice.  Indeed, one has 
a natural bijection which identifies $\Int (n)$ to the lattice of 
subsets of $\{ 1, \ldots , n-1 \}$, by sending a partition 
$\pi = \{ J_1, \ldots , J_k \} \in \Int (n )$ to the set
$\{ \max (J_1), \ldots , \max (J_k) \} \setminus \{ n \} 
\subseteq \{ 1, \ldots , n-1 \}$.
By using this fact it is easy to see that, with $\sigma$ as defined 
in Equation (\ref{eqn:210b}), a partition $\pi \in \Int (n)$ has 
\[
\bigl( \pi \vee \sigma = 1_n \bigr) \ \Leftrightarrow 
\Bigl(  \begin{array}{c}
\mbox{$i(p)$ and $i(p)+1$ belong to the same}  \\
\mbox{block of $\pi$, for all $1 \leq p \leq m-1$}
\end{array} \Bigr) .
\]
This allows for a somewhat more convenient re-phrasing of the 
join condition invoked on the right-hand side of
Equation (\ref{eqn:210a}). 
\end{remark}

$\ $

\section{The partial order $\ll$, VNRP, and the proof of 
Theorem \ref{thm:131}}

\subsection{The partial order \boldmath{$\ll$} on $NC(n)$, 
and its upper ideals.}

$\ $

\noindent
In this paper we also make use of another partial order
relation on $NC(n)$, coarser than reverse refinement, 
which is denoted as ``$\ll$''.  The 
partial order $\ll$ has been used for some time in free 
probability (starting with \cite{BeNi2008}), in the description 
of relations between free and Boolean cumulants.  

$\ $

\begin{defrem}       \label{def:31}
{\em (The partial order ``$\ll$''.)}

(1) For $\pi , \rho \in NC(n)$, we will write $\pi \ll \rho$ to mean 
that $\pi \leq \rho$ and that, in addition, for every block $W$ of 
$\rho$ there exists a block $V$ of $\pi$ such that $\min (W), \max (W) \in V$.  

(2) Since in this paper we have a lot of occurrences of the special case
``$\pi \ll 1_n$'', let us record the obvious fact that this simply amounts 
to requiring $\pi$ to have a unique outer block $W$, with $1,n \in W$.

(3) It is immediate that $\Int (n)$ is precisely equal to the 
set of maximal elements of the poset $( NC(n), \ll )$.

(4) A significant point about the partial order $\ll$ on $NC(n)$ 
is that we have a nice structure for its upper ideals, that is, 
for the sets of non-crossing partitions of the form
\begin{equation}   \label{eqn:31a}
\{ \rho \in NC(n) \mid \rho \gg \pi \} , 
\ \mbox{ for a fixed $\pi \in NC(n)$.}
\end{equation}
This was noticed in Section 2 of \cite{BeNi2008}, but the discussion 
around the set (\ref{eqn:31a}) was mostly done in a proof (cf. proof 
of Proposition 2.13 in \cite{BeNi2008}), and it will be useful for 
our present purposes to spell that out in more detail.  It turns out 
to be convenient to use a notion of ``projection map'' for blocks of 
a fixed $\pi \in NC(n)$, as introduced in the next definition.
\end{defrem}

$\ $

\begin{definition}   \label{def:32}
Let $n$ be in $\bN$ and let $\pi$ be a partition in $NC(n)$. 
A {\em block-projection} for $\pi$ is a map
$\Phi : \pi \to \pi$ which has the following properties.

(i) $\Phi$ is a projection map; that is, $\Phi \circ \Phi = \Phi$.

(ii) If $A,B \in \pi$ and if $A \leqnest B$, then it 
follows that $\Phi (A) \leqnest \Phi (B)$.

(iii) $A \leqnest \Phi (A)$ for all $A \in \pi$.

\vspace{6pt}

\noindent
For such a $\Phi$ we will denote
$\Ran ( \Phi ) := \{ B \in \pi \mid \exists \, A \in \pi 
\mbox{ such that } \Phi (A) = B \}$.
Note that, due to the property (i) satisfied by $\Phi$, 
we can also
write $\Ran ( \Phi ) = \{ B \in \pi \mid \Phi (B) = B \}$.
\end{definition}

\begin{remark}    \label{rem:33}
Let $n$ be in $\bN$ and let $\pi$ be a partition in $NC(n)$. 

(1) Let $\Phi : \pi \to \pi$ be a block-projection for $\pi$.
Property (iii) satisfied by $\Phi$ implies that 
$\Ran ( \Phi )$ contains all the outer blocks of $\pi$.

(2) Let $\Phi , \Psi : \pi \to \pi$ be block-projections for $\pi$,
and suppose that $\Ran ( \Phi ) = \Ran ( \Psi )$.  Then $\Phi = \Psi$.
Indeed, for every block $A \in \pi$ we can apply $\Psi$ to both sides 
of the relation $A \leqnest \Phi (A)$ to get that 
$\Psi (A) \leqnest \Psi ( \, \Phi (A) \, ) = \Phi (A)$,
(where the latter equality holds because 
$\Phi (A) \in \Ran ( \Phi ) = \Ran ( \Psi )$, hence $\Phi (A)$ is 
fixed by $\Psi$).  A symmetric argument gives that
$\Phi (A) \leqnest \Psi (A)$, and it follows that $\Phi (A) = \Psi (A)$,
as required. 
\end{remark}

\begin{lemma}    \label{lemma:34}
Let $n$ be in $\bN$ and let $\pi$ be a partition in $NC(n)$. 
Let $\fM$ be a subset of $\pi$ such that $\fM$ contains all the 
outer blocks of $\pi$.  Then there exists a block-projection 
$\Phi : \pi \to \pi$, uniquely determined, such that 
$\Ran ( \Phi ) = \fM$.
\end{lemma}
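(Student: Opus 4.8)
The uniqueness assertion is already supplied by Remark~\ref{rem:33}(2), so the only real work is to \emph{construct} a block-projection $\Phi : \pi \to \pi$ with $\Ran ( \Phi ) = \fM$. The plan is to define, for each block $A \in \pi$,
\[
\Phi (A) := \mbox{ the $\leqnest$-least element of }
\{ B \in \fM \mid B \geqnest A \} .
\]
First I would check that this is well posed. By Remark and Definition~\ref{rem:23}, the set $\{ W \in \pi \mid W \geqnest A \}$ is a finite chain $A = V_1 \lnest V_2 \lnest \cdots \lnest V_k$ whose top element $V_k$ is an outer block of $\pi$. The set $\{ B \in \fM \mid B \geqnest A \}$ is a subset of this chain, and it is non-empty because $V_k \in \fM$ (here we use that $\fM$ contains all outer blocks). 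Being a non-empty finite totally ordered set, it has a $\leqnest$-least element, which is the block we call $\Phi (A)$.

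Next I would verify the three requirements of Definition~\ref{def:32}. Property (iii), namely $A \leqnest \Phi (A)$, is immediate since $\Phi (A)$ is by construction one of the blocks $B$ with $B \geqnest A$. For property (ii): if $A \leqnest B$, then $\Phi (B) \in \fM$ and $\Phi (B) \geqnest B \geqnest A$, so $\Phi (B)$ belongs to the set of which $\Phi (A)$ is the $\leqnest$-least element, and hence $\Phi (A) \leqnest \Phi (B)$. For property (i): fix $A \in \pi$ and put $B := \Phi (A) \in \fM$; then $B \leqnest C$ for every $C$ in the set $\{ C \in \fM \mid C \geqnest B \}$, so $B$ itself is the least element of that set, which says precisely $\Phi (B) = B$, i.e. $\Phi ( \Phi (A) ) = \Phi (A)$.

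Finally I would pin down the range. By property (i) one has $\Ran ( \Phi ) = \{ B \in \pi \mid \Phi (B) = B \}$. The computation in the previous paragraph shows $\Phi (B) = B$ for every $B \in \fM$, so $\fM \subseteq \Ran ( \Phi )$; conversely, $\Phi (B)$ always lies in $\fM$ by construction, so $\Phi (B) = B$ forces $B \in \fM$, giving $\Ran ( \Phi ) \subseteq \fM$. Hence $\Ran ( \Phi ) = \fM$, as required. I do not expect any serious obstacle in this argument: the single point that needs care is the well-posedness of the defining minimum, and that is exactly the place where the two hypotheses — that the blocks $\geqnest A$ form a $\leqnest$-chain, and that $\fM$ contains every outer block — are invoked together.
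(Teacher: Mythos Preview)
Your proof is correct and takes essentially the same approach as the paper: the paper defines $\Phi$ recursively via $\Phi(A)=A$ for $A\in\fM$ and $\Phi(A)=\Phi(\Parent_\pi(A))$ otherwise, then observes this amounts to $\Phi(A)=B_j$ with $j=\min\{i\mid B_i\in\fM\}$ in the chain above $A$, which is exactly your ``$\leqnest$-least element of $\{B\in\fM\mid B\geqnest A\}$''. Your version is in fact more complete, since you explicitly verify conditions (i)--(iii) of Definition~\ref{def:32} and the equality $\Ran(\Phi)=\fM$, whereas the paper leaves these checks implicit.
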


\begin{proof}  Uniqueness of $\Phi$ follows from the preceding remark.
In order to prove existence, we use the following prescription
to define $\Phi$:
\begin{equation}    \label{eqn:34a}
\left\{  \begin{array}{l}
\mbox{if $A \in \fM$, then $\Phi (A) = A$;}              \\
\mbox{if $A \in \pi \setminus \fM$, then 
      $\Phi (A) = \Phi( \, \Parent_{\pi} (A) \, )$.}      \\
\end{array}   \right.
\end{equation}
The definition proposed via Equations (\ref{eqn:34a}) is consistent because 
if we start with any $A \in \pi$ and do iterations of the $\Parent_{\pi}$ map
on it, we will eventually have to find a block that belongs to $\fM$.
Or more precisely: if we start with $A \in \pi$ and we write explicitly
$\{ B \in \pi \mid B \geqnest A \} = \{ B_1, \ldots , B_k \}$ in the way
indicated in Remark \ref{def:22}(4), then Equations (\ref{eqn:34a}) 
define $\Phi (A) = B_j$ with 
$j := \min \Bigl\{ i \in \{ 1, \ldots , k \} \mid B_i \in \fM \Bigr\}$.
\end{proof}

\begin{remark}    \label{rem:35}
One has a natural construction of block-projection map 
$\Phi : \pi \to \pi$ which arises whenever we are given two partitions 
$\pi , \rho \in NC(n)$ such that $\pi \ll \rho$.  Recall that, in this 
situation, for every block $X \in \rho$ there exists a block
$B \in \pi$ such that
\begin{equation}   \label{eqn:35a}
B \subseteq X \mbox{ and } \min (B) = \min (X), \ \max (B) = \max (X).
\end{equation}  
We then define $\Phi : \pi \to \pi$ as follows: for every $A \in \pi$
we consider the (unique) block $X \in \rho$ such that 
$X \supseteq A$, and then we define $\Phi (A) := B$, where $B$ is as in 
(\ref{eqn:35a}).  It is easy to check that the map $\Phi : \pi \to \pi$
defined in this way fulfills the conditions (i), (ii) and (iii) from 
Definition \ref{def:32}, hence is indeed a block-projection map for $\pi$.

Let us record that, in the terminology introduced in 
\cite{BeNi2008}, a block $B$ as in (\ref{eqn:35a}) 
is said to be a {\em $\rho$-special} block of $\pi$.  The 
block-projection $\Phi$ constructed above is characterized by the fact 
that $\Ran ( \Phi )$ is precisely the set of all $\rho$-special blocks 
of $\pi$.
\end{remark}

We now come to the main point concerning the set of partitions indicated
in (\ref{eqn:31a}), namely that it is actually ``parametrized '' by the set 
of block-projection maps for $\pi$, where the parametrization is just the
inverse of the natural construction indicated in Remark \ref{rem:35}.
The formal statement of how this works is recorded in the next proposition.

\begin{proposition}    \label{prop:36}
Let $n$ be in $\bN$, let $\pi$ be a partition in $NC(n)$, and let 
$\Phi : \pi \to \pi$ be a block-projection map.  Then there exists
$\rho \in NC(n)$, uniquely determined, such that $\rho \gg \pi$ and
such that $\Phi$ is obtained from $\pi$ and $\rho$ by using the recipe 
described in Remark \ref{rem:35}.  If we list the range of $\Phi$ as 
$\Ran ( \Phi ) =: \{ B_1, \ldots, B_p \}$, then the partition $\rho$
can be described explicitly as $\rho = \{ X_1, \ldots , X_p \}$, 
where
\begin{equation}    \label{eqn:36a}
X_j = \cup_{A \in \Phi^{-1} (B_j)} \ A, \ \ 1 \leq j \leq p.
\end{equation}
\end{proposition}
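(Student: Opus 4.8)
The plan is to establish Proposition~\ref{prop:36} by first constructing the candidate partition $\rho$ explicitly from $\Phi$ using formula (\ref{eqn:36a}), then checking that this $\rho$ lies in $NC(n)$, that $\rho \gg \pi$, and that applying the recipe of Remark~\ref{rem:35} to the pair $(\pi,\rho)$ recovers $\Phi$; finally, uniqueness follows from Remark~\ref{rem:33}(2) once we observe that the recipe of Remark~\ref{rem:35} reads off a block-projection whose range is determined by $\rho$.

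\medskip

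First I would set $\Ran(\Phi) = \{B_1,\ldots,B_p\}$ and define $X_j := \bigcup_{A \in \Phi^{-1}(B_j)} A$ for $1 \le j \le p$. Since $\Phi$ is a map $\pi \to \pi$, the fibers $\Phi^{-1}(B_1),\ldots,\Phi^{-1}(B_p)$ partition $\pi$, so the sets $X_1,\ldots,X_p$ are non-empty, pairwise disjoint, and cover $\{1,\ldots,n\}$; thus $\rho := \{X_1,\ldots,X_p\}$ is a genuine set-partition with $\pi \le \rho$. The key structural point is that each $X_j$ is an \emph{interval} of blocks of $\pi$ in the $\leqnest$ order, with $B_j$ as its $\leqnest$-maximal element: indeed $B_j \in \Phi^{-1}(B_j)$ because $\Phi$ is idempotent, and every $A \in \Phi^{-1}(B_j)$ satisfies $A \leqnest \Phi(A) = B_j$ by property~(iii). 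Moreover, if $A \leqnest A' \leqnest B_j$ with $A \in \Phi^{-1}(B_j)$, then property~(ii) gives $\Phi(A) \leqnest \Phi(A') \leqnest \Phi(B_j)$, i.e. $B_j \leqnest \Phi(A') \leqnest B_j$, forcing $\Phi(A') = B_j$; so $\Phi^{-1}(B_j)$ is downward-closed below $B_j$ inside the totally ordered chain $\{W \in \pi \mid W \geqnest A\}$ described in Remark~\ref{def:22}. From this one deduces that $X_j$ is the set of positions "enclosed by $B_j$ but not enclosed by any nearer $\leqnest$-descendant lying in $\Ran(\Phi)$", which in particular shows $\min(X_j) = \min(B_j)$ and $\max(X_j) = \max(B_j)$.

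\medskip

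Next I would verify $\rho \in NC(n)$ and $\rho \gg \pi$ together. Non-crossingness: the $X_j$ are unions of "nesting intervals" of the non-crossing partition $\pi$, and a standard fact (which I would argue directly from the interval description above) is that merging each block of $\pi$ with its enclosing $\rho$-class preserves the non-crossing property — concretely, any potential crossing $i_1 < i_2 < i_3 < i_4$ with $i_1,i_3 \in X_j$ and $i_2,i_4 \in X_{j'}$ would, by tracing back through $\Phi$, produce a crossing already among blocks of $\pi$, contradiction. The relation $\rho \gg \pi$ then requires, for each block $X_j$ of $\rho$, a block of $\pi$ containing both $\min(X_j)$ and $\max(X_j)$; this is exactly $B_j$, by the previous paragraph. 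Then I would check that the recipe of Remark~\ref{rem:35} applied to $(\pi,\rho)$ gives back $\Phi$: for $A \in \pi$, that recipe sends $A$ to the $\rho$-special block inside the unique $X_j \supseteq A$, namely the block of $\pi$ achieving $\min(X_j)$ and $\max(X_j)$; since $B_j$ is such a block and (by the interval/chain structure) it is the \emph{unique} one, the recipe yields $B_j = \Phi(A)$, as desired. Finally, uniqueness of $\rho$: if $\rho'$ also satisfies $\rho' \gg \pi$ and induces $\Phi$ via Remark~\ref{rem:35}, then $\rho'$ induces a block-projection with range equal to the $\rho'$-special blocks of $\pi$; but that block-projection is $\Phi$, so the $\rho'$-special blocks are $\{B_1,\ldots,B_p\} = \Ran(\Phi)$, and each block of $\rho'$ is then forced (by $\pi \le \rho'$ and the definition of block-projection via (\ref{eqn:34a})) to be the union of the $\pi$-blocks mapping to a given $B_j$, i.e. $\rho' = \rho$.

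\medskip

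I expect the main obstacle to be the careful verification that $\rho$ is non-crossing and that $B_j$ is the \emph{unique} $\rho$-special block inside $X_j$ — both hinge on pinning down the precise combinatorial shape of the fiber $\Phi^{-1}(B_j)$ as a union of nesting-intervals below $B_j$, which is where conditions (ii) and (iii) of Definition~\ref{def:32} do all the work. Everything else (existence of the fibers, $\pi \le \rho$, matching with $\min$/$\max$, uniqueness via Remark~\ref{rem:33}(2)) is bookkeeping once that structural lemma about fibers is in hand. It may be cleanest to extract that structural observation as a separate sublemma before assembling the proof.
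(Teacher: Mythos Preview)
Your proposal is correct and follows essentially the same approach the paper has in mind: the paper does not actually write out a proof of Proposition~\ref{prop:36}, but defers to Proposition~2.13 of \cite{BeNi2008}, whose argument is precisely the one you sketch---build $\rho$ from the fibers of $\Phi$, identify $B_j$ as the $\leqnest$-top of each fiber so that $\min(X_j)=\min(B_j)$ and $\max(X_j)=\max(B_j)$, and read off non-crossingness, $\rho\gg\pi$, and uniqueness from there. Your identification of the fiber structure (downward-closed below $B_j$ in the nesting chain, via properties (ii) and (iii)) is exactly the right ``sublemma'' to isolate, and is what makes the non-crossing check and the recovery of $\Phi$ from $(\pi,\rho)$ go through cleanly.
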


The proof of Proposition \ref{prop:36} amounts essentially to reproducing
the proof of Proposition 2.13 from \cite{BeNi2008}, but where we work with 
$\Phi$ itself rather than writing all the arguments in terms of the set 
of blocks $\Ran ( \Phi )$.  We note that in view of Lemma \ref{lemma:34}, 
the parametrization of $\{ \rho \in NC(n) \mid \rho \gg \pi \}$ in terms 
of block-projections for $\pi$ can also be viewed as a parametrization in 
terms of subsets of $\pi$ which contain all the outer blocks -- this is, 
actually, what was observed in Proposition 2.13 of \cite{BeNi2008} and in 
the proof of that proposition.

We conclude the discussion about $\ll$ with an observation that will be 
needed in the next subsection.  This observation does not depend on 
Proposition \ref{prop:36}, it is just a direct consequence of how the 
notion of ``$\rho$-special block of $\pi$'', is defined.  It goes as 
follows.

\begin{lemma}   \label{lemma:37}
Let $n$ be in $\bN$, and let $\pi , \rho \in NC(n)$ be such that 
$\pi \ll \rho$.  Let $A$ be a $\rho$-special block of $\pi$ which is not 
outer, and let $B = \Parent_{\pi} (A) \in \pi$.  Let $X,Y$ be the blocks 
of $\rho$ determined by the requirements that $X \supseteq A$ and 
$Y \supseteq B$.  Then $Y = \Parent_{\rho} (X)$.
\hfill $\square$
\end{lemma}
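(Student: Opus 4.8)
The plan is to check that $Y$ satisfies the two defining conditions of $\Parent_{\rho}(X)$ recalled in Remark and Definition \ref{rem:23}: namely that $X \lnest Y$, and that there is no block $Z$ of $\rho$ with $X \lnest Z \lnest Y$.

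First I would assemble the elementary facts. Since $\pi \leq \rho$, each block of $\rho$ is a union of blocks of $\pi$, so $A \subseteq X$ and $B \subseteq Y$; and since $A$ is $\rho$-special we have $\min (A) = \min (X)$ and $\max (A) = \max (X)$. From $A \lnest B$ (as $B = \Parent_{\pi}(A)$) and $B \subseteq Y$ one gets $\min (Y) \leq \min (B) \leq \min (A) = \min (X)$ and, symmetrically, $\max (Y) \geq \max (X)$, hence $X \leqnest Y$. It remains to see $X \neq Y$, and here I would argue by contradiction: if $X = Y$ then $B \subseteq X$ forces $\min (X) \leq \min (B)$ and $\max (B) \leq \max (X)$, i.e. $B \leqnest A$, which together with $A \lnest B$ and antisymmetry of $\leqnest$ yields $A = B$, impossible. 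So $X \lnest Y$; in particular $X$ is not an outer block of $\rho$, so $\Parent_{\rho}(X)$ is defined.

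For the second condition, I would suppose that some $Z \in \rho$ has $X \lnest Z \lnest Y$ and derive a contradiction. Using $\pi \ll \rho$, let $C$ be the block of $\pi$ containing both $\min (Z)$ and $\max (Z)$; since $C$ is a block of $\pi$ it lies inside a single block of $\rho$, and that block must be $Z$ (it meets $Z$), so $C \subseteq Z$, $\min (C) = \min (Z)$, $\max (C) = \max (Z)$ -- that is, $C$ is the $\rho$-special block of $\pi$ sitting in $Z$. From $X \lnest Z$ one gets $\min (C) = \min (Z) \leq \min (X) = \min (A)$ and $\max (C) \geq \max (A)$, so $A \leqnest C$; and $A \neq C$ because $A$ and $C$ lie in the distinct, hence disjoint, blocks $X \neq Z$ of $\rho$. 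Thus $A \lnest C$. Since $B = \Parent_{\pi}(A)$ is the $\leqnest$-least block of $\pi$ strictly above $A$, this forces $B \leqnest C$, i.e. $\min (Z) \leq \min (B)$ and $\max (B) \leq \max (Z)$; moreover both inequalities are strict, since an equality would place an element of $B \subseteq Y$ inside $Z$, contradicting $Y \cap Z = \emptyset$. Finally, $Z \lnest Y$ with $Y \cap Z = \emptyset$ gives $\min (Y) < \min (Z)$ and $\max (Z) < \max (Y)$. Choosing any $b \in B \subseteq Y$, we then have $\min (Y) < \min (Z) < b < \max (Z)$ with $\min (Y), b \in Y$ and $\min (Z), \max (Z) \in Z$ -- a crossing, contradicting $\rho \in NC(n)$. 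Hence no such $Z$ exists and $Y = \Parent_{\rho}(X)$.

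All the computations above are short, and I expect the only point requiring care to be the bookkeeping of strict versus non-strict inequalities in the last step: each strictness is extracted from the disjointness of two distinct blocks of the non-crossing partition $\rho$, so one must keep precise track of which integers are known to lie in $Y$ and which in $Z$. Everything else is routine manipulation of the orders $\leqnest$ and $\ll$ together with the definition of $\rho$-special blocks, and in particular the argument does not use Proposition \ref{prop:36}.
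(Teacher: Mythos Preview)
Your proof is correct and is exactly the kind of elementary argument, directly from the definitions of $\ll$, $\leqnest$, $\Parent$, and $\rho$-special blocks, that the paper has in mind; the paper in fact does not spell out a proof at all but simply declares the lemma to follow by such an argument and places a $\square$. Your write-up therefore supplies precisely the missing details, and the care you take with the strict inequalities in the final crossing argument is appropriate and accurate.
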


The proof of Lemma \ref{lemma:37} is done by an elementary argument, 
directly from the definitions of the notions involved.  (One must keep in 
mind, of course, that the hypothesis ``$A$ is $\rho$-special'' means,
by definition, that $\min (A) = \min (X)$ and $\max (A) = \max (X)$.)

\vspace{10pt}

\subsection{VNRP, and the proof of Theorem \ref{thm:131}.}

$\ $

\noindent
Throughout this whole subsection we fix the data used in the statement 
of Theorem \ref{thm:131}.  That is, we fix two positive integers $m$ and 
$s$, and a function $c : \{ 1, \ldots , m \} \to \{ 1, \ldots , s \}$.
(We think of $c$ as of a ``colouring of $\{ 1, \ldots , m \}$ in $s$
colours''.)  
We will denote by $NC(m;c)$ the subset of $NC(m)$ defined by
\[
NC(m;c) := \{ \sigma \in NC(m) \mid
\mbox{ $c$ is constant on every block of $\sigma$} \} .
\]
For $\sigma \in NC(m;c)$ and $A \in \sigma$, we will use the notation 
$c(A)$ for the common value $c(a) \in \{ 1, \ldots , s \}$ taken by 
$c$ on all $a \in A$.
Note that on $NC(m;c)$ we have two partial order relations ``$\leq$'' 
(reverse refinement) and ``$\ll$'', induced from $NC(n)$.

The definition of VNRP goes as follows.

\begin{definition}   \label{def:41}
A partition $\sigma \in NC(m;c)$ will be said to have the 
{\em vertical no-repeat property} with respect to $c$ when the 
following happens: for every inner block $A \in \sigma$, 
one has
\[
c( \, \Parent_{\sigma} (A) \, ) \neq c(A).
\]
As already done in the Introduction, we will refer to the 
vertical no-repeat property by using the acronym ``VNRP''.
(Note that if the number of colours $s$ would happen to be 
$s=2$, VNRP could also go under the name of 
``vertical alternance property''.)
\end{definition}

$\ $

Our goal for the section is to prove Theorem \ref{thm:131} stated in 
the Introduction.  In order to do so, we start with an adjustment
of Proposition \ref{prop:36} to the present framework which uses 
coloured partitions from $NC(m;c)$.

\begin{proposition}  \label{prop:Y3}
Let $\sigma$ be a partition in $NC(m;c)$.  
Let $\Phi : \sigma \to \sigma$ be a block-projection map, and let 
$\rho \in NC(m)$ be the partition with $\rho \gg \sigma$ which is 
parametrized by $\Phi$ in Proposition \ref{prop:36}.  We have that:
\begin{equation}   \label{eqn:Y3a}
\Bigl( \rho \in NC(m;c) \Bigr) \ \Leftrightarrow
\ \Bigl( c( \Phi (A) ) = c(A), \ \forall \, A \in \sigma \Bigr) .
\end{equation} 
\end{proposition}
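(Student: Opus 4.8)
The plan is to unwind the explicit description of $\rho$ furnished by Proposition \ref{prop:36} and then observe that the asserted equivalence falls out almost immediately. First I would write $\Ran ( \Phi ) = \{ B_1, \ldots , B_p \}$, so that $\rho = \{ X_1, \ldots , X_p \}$ with $X_j = \cup_{A \in \Phi^{-1}(B_j)} A$ for $1 \leq j \leq p$, as in (\ref{eqn:36a}). I would also recall that the condition ``$\rho \in NC(m;c)$'' means precisely that $c$ is constant on each block $X_j$ of $\rho$, whereas the right-hand side of (\ref{eqn:Y3a}) asks that $c(A)$ agree with $c( \Phi (A) )$ for every $A \in \sigma$ --- note that $c(A)$ and $c( \Phi (A) )$ already make sense, since $\sigma \in NC(m;c)$ forces $c$ to be constant on the blocks of $\sigma$.

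For the implication ``$\Leftarrow$'', I would fix $j$ and pick an arbitrary block $A \in \Phi^{-1}(B_j)$; the hypothesis gives $c(A) = c( \Phi (A) ) = c(B_j)$. Hence $c$ takes the single value $c(B_j)$ on every block of $\sigma$ contained in $X_j$, and therefore on all of $X_j = \cup_{A \in \Phi^{-1}(B_j)} A$. As $j$ is arbitrary, this says $\rho \in NC(m;c)$. For the implication ``$\Rightarrow$'', I would fix $A \in \sigma$; since $\sigma \leq \rho$, the block $A$ is contained in a unique block $X_j$ of $\rho$, and for that same $j$ one has $\Phi (A) = B_j$ (as $\Phi (A) \in \Ran ( \Phi )$ and $A \in \Phi^{-1}( \Phi (A) )$), with $B_j \subseteq X_j$ as well. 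Thus $A$ and $\Phi (A)$ are both blocks of $\sigma$ sitting inside the one block $X_j$; assuming $\rho \in NC(m;c)$, the colouring $c$ is constant on $X_j$, so $c(A) = c( \Phi (A) )$, as needed.

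I do not anticipate any genuine obstacle in this proof: the content is entirely bookkeeping against the formula (\ref{eqn:36a}). The only point deserving a moment's care is, in the ``$\Rightarrow$'' direction, the verification that the block $X_j$ of $\rho$ containing a given $A \in \sigma$ is the same as the one containing $\Phi (A)$ --- but this is built into the description $X_j = \cup_{A' \in \Phi^{-1}(B_j)} A'$ together with the trivial membership $A \in \Phi^{-1}( \Phi (A) )$ and the fact that $B_j = \Phi (B_j) \in \Phi^{-1}(B_j)$.
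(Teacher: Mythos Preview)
Your proof is correct and follows essentially the same approach as the paper's own proof: both directions are handled by direct bookkeeping against the explicit description $X_j = \cup_{A \in \Phi^{-1}(B_j)} A$ from Equation~(\ref{eqn:36a}), observing that $A$ and $\Phi(A)$ always lie in the same block of $\rho$. The paper's write-up is slightly terser but the logical content is identical.
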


\begin{proof} ``$\Rightarrow$'' From the concrete description of $\rho$ 
provided by Equation (\ref{eqn:36a}) of Proposition \ref{prop:36}, it is 
clear that for every $A \in \sigma$, the blocks $A$ and $\Phi (A)$ are 
contained in the same block $X$ of $\rho$.  The latter fact implies in 
particular that $c(A) = c(X) = c( \Phi (A) )$.  The condition on $\Phi$ 
listed on the right-hand side of (\ref{eqn:Y3a}) thus follows.

\vspace{6pt}

``$\Leftarrow$''  Let $X$ be a block of $\rho$, and consider the block
$B$ of $\sigma$ such that $\min (X), \max (X) \in B$. The explicit description 
of $\rho$ provided by Equation (\ref{eqn:36a}) in Proposition \ref{prop:36}
tells us that
\begin{equation}   \label{eqn:Y3b}
X = \cup_{A \in \Phi^{-1} (B)} \ A.
\end{equation}
Denoting $c(B) = s_o \in \{ 1, \ldots , s \}$, we see that $c(A) = s_o$ for 
every $A$ appearing in the union from (\ref{eqn:Y3b}) -- indeed, one has 
$\Phi (A) = B$, hence the hypothesis $c(A) = c( \Phi (A) )$ gives 
$c(A) = c(B) = s_o$.  This makes it clear that $c$ is constantly equal to 
$s_o$ on $X$, and completes the verification that $\rho \in NC(n;c)$.
\end{proof}

\begin{corollary}  \label{cor:Y4}
Let $\sigma, \rho$ be partitions in $NC(m;c)$ such that 
$\sigma \ll \rho$.  Let $\Phi : \sigma \to \sigma$ be the block-projection 
map which corresponds to $\rho$ in Remark \ref{rem:35}.  One has
\begin{equation}   \label{eqn:Y4a}
\Ran ( \Phi ) \supseteq
\{ A \in \sigma \mid A \mbox{ is inner and } 
   c( \Parent_{\sigma} (A) ) \neq c(A) \} .
\end{equation}
\end{corollary}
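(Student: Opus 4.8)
The plan is to argue by contradiction, combining the recursive description of block-projection maps with the colour-compatibility criterion of Proposition \ref{prop:Y3}. So, fix an inner block $A \in \sigma$ with $c(\Parent_{\sigma}(A)) \neq c(A)$, and assume towards a contradiction that $A \notin \Ran(\Phi)$.

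The first step is to extract a usable formula for $\Phi$. By Remark \ref{rem:33}(1) the set $\Ran(\Phi)$ contains all outer blocks of $\sigma$, so Lemma \ref{lemma:34} applies with $\fM := \Ran(\Phi)$; since a block-projection is determined by its range, $\Phi$ must coincide with the map built by the prescription (\ref{eqn:34a}). In particular, for any block of $\sigma$ lying outside $\Ran(\Phi)$ -- and $A$ is such a block, by assumption -- we have $\Phi(A) = \Phi(\Parent_{\sigma}(A))$.

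The second step uses the hypothesis $\rho \in NC(m;c)$. Since $\Phi$ is exactly the block-projection that parametrizes $\rho$ in Proposition \ref{prop:36} (equivalently, the one associated to $\rho$ in Remark \ref{rem:35}), the implication ``$\Rightarrow$'' of Proposition \ref{prop:Y3} gives $c(\Phi(Y)) = c(Y)$ for every $Y \in \sigma$. Applying this with $Y = A$ and with $Y = \Parent_{\sigma}(A)$, and inserting the identity $\Phi(A) = \Phi(\Parent_{\sigma}(A))$ from the first step, we obtain
\[
c(A) = c(\Phi(A)) = c(\Phi(\Parent_{\sigma}(A))) = c(\Parent_{\sigma}(A)),
\]
contradicting $c(\Parent_{\sigma}(A)) \neq c(A)$. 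Hence $A \in \Ran(\Phi)$, which is exactly the inclusion (\ref{eqn:Y4a}).

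I do not expect a genuine obstacle here; the only delicate point is the bookkeeping that the $\Phi$ produced by Lemma \ref{lemma:34} out of the set $\Ran(\Phi)$ is the very map we started with -- so that the recursion (\ref{eqn:34a}) may legitimately be invoked for it -- and that this is also the $\Phi$ featuring in Proposition \ref{prop:Y3}. If one wished to bypass Lemma \ref{lemma:34}, the same contradiction can be reached geometrically: were $A$ not $\rho$-special, it would be properly nested in the $\rho$-special block $\Phi(A)$ of the block $X \in \rho$ that contains it; then $\Parent_{\sigma}(A)$ is nested in $\Phi(A)$ as well, hence strictly inside the interval $[\min(X), \max(X)]$, so any block of $\rho$ other than $X$ containing $\Parent_{\sigma}(A)$ would be crossing with $X$; thus $\Parent_{\sigma}(A) \subseteq X$ too, and $c(A) = c(X) = c(\Parent_{\sigma}(A))$, again absurd.
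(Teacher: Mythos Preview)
Your proof is correct and follows essentially the same approach as the paper's: both show that if $A_o \notin \Ran(\Phi)$ then $c(A_o) = c(\Parent_\sigma(A_o))$, by first establishing $\Phi(A_o) = \Phi(\Parent_\sigma(A_o))$ and then chaining this with the colour identities $c(Y) = c(\Phi(Y))$ supplied by Proposition~\ref{prop:Y3}. The only cosmetic difference is that you justify $\Phi(A) = \Phi(\Parent_\sigma(A))$ by invoking Lemma~\ref{lemma:34} and the recursion~(\ref{eqn:34a}), whereas the paper derives this equality directly from axioms (i)--(iii) of Definition~\ref{def:32} (from $A_o \lnest \Phi(A_o)$ and the parent relation one gets $A_1 \leqnest \Phi(A_o)$, whence $\Phi(A_1) \leqnest \Phi(\Phi(A_o)) = \Phi(A_o) \leqnest \Phi(A_1)$); both routes are short and valid.
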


\begin{proof}  We prove the reverse inclusion for the complements of the 
sets indicated in in (\ref{eqn:Y4a}).  That is: we pick 
$A_o \in \sigma \setminus \Ran ( \Phi )$, 
and we prove that $A_o$ belongs to the complement
of the set on the right-hand side of (\ref{eqn:Y4a}).  The condition 
$A_o \not\in \Ran ( \Phi )$ implies in particular that $A_o$ is inner,
so what we have to prove is the equality 
$c( \Parent_{\sigma} (A_o) ) = c(A_o)$.

We denote $\Parent_{\sigma} (A_o) = A_1$. It is easy to see (directly 
from the properties of $\Phi$ listed in Definition \ref{def:32}) 
that the assumption $A_o \not\in \Ran ( \Phi )$ (which is equivalent to
$\Phi ( A_o ) \neq A_o$, hence to $A_o \lnest \Phi (A_o)$) entails the
equality $\Phi (A_o) = \Phi (A_1)$.  On the other hand, the assumption 
that $\rho \in NC(m;c)$ entails, via Proposition \ref{prop:Y3}, the 
equalities $c(A_o) = c( \Phi (A_o) )$ and $c(A_1) = c( \Phi (A_1) )$.
By putting all these things together we find that 
$c(A_o) = c( \Phi (A_o) )$
= $c( \Phi (A_1) ) = c(A_1)$.
Hence $c( \Parent_{\sigma} (A_o) ) = c(A_o)$, as required.
\end{proof}

$\ $

The next proposition addresses the uniqueness part in the statement of 
Theorem \ref{thm:131}, by giving an explicit description of the set of
$\tau$-special blocks of $\sigma$, for the partition 
$\tau \gg \sigma $ which is needed in the conclusion of the theorem.

\begin{proposition}   \label{prop:Y5}
Let $\sigma$ be a partition in $NC(m;c)$.  Suppose that
$\tau \in NC(m;c)$ has VNRP, and is such that $\tau \gg \sigma$.  
Then the set of $\tau$-special blocks of $\sigma$ is equal to
\begin{equation}   \label{eqn:Y5a}
\{ A \in \sigma \mid A \mbox{ is outer} \} \cup
\{ A \in \sigma \mid A \mbox{ is inner and } 
   c( \Parent_{\pi} (A) ) \neq c(A) \} .
\end{equation}
\end{proposition}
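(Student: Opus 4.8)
The plan is to translate the statement into one about the block-projection map $\Phi : \sigma \to \sigma$ associated to the pair $\sigma \ll \tau$ in the manner of Remark \ref{rem:35}, using the fact recorded there that $\Ran ( \Phi )$ is exactly the set of $\tau$-special blocks of $\sigma$. Once this identification is made, the proposition becomes the assertion that $\Ran ( \Phi )$ coincides with the set (\ref{eqn:Y5a}), and I would prove the two inclusions separately.

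For the inclusion ``$\Ran ( \Phi ) \supseteq$ the set (\ref{eqn:Y5a})'' essentially no new work is needed. On the one hand, Remark \ref{rem:33}(1) guarantees that $\Ran ( \Phi )$ contains every outer block of $\sigma$. On the other hand, since $\tau \in NC(m;c)$ by hypothesis, Corollary \ref{cor:Y4} (applied with $\rho = \tau$) guarantees that $\Ran ( \Phi )$ contains every inner block $A$ of $\sigma$ with $c ( \Parent_{\sigma} (A) ) \neq c(A)$. Taking the union of these two facts gives the inclusion at once.

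The content of the proposition lies in the reverse inclusion ``$\Ran ( \Phi ) \subseteq$ the set (\ref{eqn:Y5a})'', and that is where I would focus. I would pick a $\tau$-special block $A$ of $\sigma$ and assume it is not outer (otherwise $A$ already belongs to the first set in (\ref{eqn:Y5a})); put $B := \Parent_{\sigma} (A)$, and let $X, Y$ be the unique blocks of $\tau$ containing $A$ and $B$ respectively. The tool that does the job is Lemma \ref{lemma:37}: it applies because $\sigma \ll \tau$, the block $A$ is $\tau$-special and not outer, and $B = \Parent_{\sigma}(A)$; its conclusion is that $Y = \Parent_{\tau} (X)$, and in particular that $X$ is an inner block of $\tau$. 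Next I would invoke the hypothesis that $\tau$ has VNRP with respect to $c$: applied to the inner block $X$, this gives $c ( \Parent_{\tau} (X) ) \neq c(X)$, i.e. $c(Y) \neq c(X)$. Finally, since $c$ is constant on the blocks of $\tau$ (again because $\tau \in NC(m;c)$) and $A \subseteq X$, $B \subseteq Y$, we have $c(A) = c(X)$ and $c(B) = c(Y)$; hence $c ( \Parent_{\sigma} (A) ) = c(B) = c(Y) \neq c(X) = c(A)$, which places $A$ in the second set in (\ref{eqn:Y5a}).

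I do not anticipate a genuine obstacle here: once Lemma \ref{lemma:37} is on the table, the argument is a short chain of colour-bookkeeping, and the reverse inclusion above is the only step with any substance. The one point that deserves a little attention is the passage from ``$A$ inner in $\sigma$'' to ``$X$ inner in $\tau$'', but this is delivered automatically by Lemma \ref{lemma:37} (the equality $Y = \Parent_{\tau}(X)$ forces $X \neq Y$), so it poses no difficulty.
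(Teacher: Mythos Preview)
Your proposal is correct and follows essentially the same approach as the paper: both reduce to showing $\Ran(\Phi)$ equals the set (\ref{eqn:Y5a}), obtain the inclusion $\supseteq$ from Remark~\ref{rem:33}(1) and Corollary~\ref{cor:Y4}, and handle the other inclusion via Lemma~\ref{lemma:37} together with the VNRP of $\tau$. The only cosmetic difference is that the paper phrases the second inclusion as a proof by contradiction while you argue it directly.
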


\begin{proof}  Let $\Phi : \sigma \to \sigma$ be the block-projection 
map which parametrizes $\tau$ in the way described in Proposition 
\ref{prop:36}.  The set of $\tau$-special blocks of $\sigma$ is thus
the same as $\Ran ( \Phi )$.  Corollary \ref{cor:Y4} then assures us 
that the set of $\tau$-special blocks of $\sigma$ contains all blocks 
$A \in \sigma$ such that $A$ is inner and 
$c ( \Parent_{\pi} (A) ) \neq c(A)$.
Since $\Ran ( \Phi )$ is also sure to contain all the outer blocks 
of $\sigma$ (Remark \ref{rem:33}(1)), it follows that $\Ran ( \Phi )$ 
must contain the set of blocks indicated in formula (\ref{eqn:Y5a}).  

Let us assume, for contradiction, that $\Ran ( \Phi )$ is strictly larger
than the set from (\ref{eqn:Y5a}), i.e. that it contains a block 
$V \in \sigma$ which is inner and has 
$c( \Parent_{\sigma} (V) ) = c(V)$. 
This $V$ is a $\tau$-special block of $\sigma$ (since it is in 
$\Ran ( \Phi )$), hence we can use Lemma \ref{lemma:37} with $A := V$ 
and $B := \Parent_{\sigma} (V)$.  Denoting by $X,Y$ the blocks of $\tau$
which contain $A$ and $B$, respectively, we get from Lemma \ref{lemma:37}
that $\Parent_{\tau} (X) = Y$.  Now,  we must have $c(A) = c(X)$ and 
$c(B) = c(Y)$ (since $A \subseteq X$ and $B \subseteq Y$), so from 
$c(A) = c(B)$ it follows that $c(X) = c(Y)$. This contradicts the 
VNRP of $\tau$, and concludes the proof.
\end{proof}

For the existence part in Theorem \ref{thm:131}, one could go 
by showing directly that the ``candidate for $\tau$'' suggested by 
Proposition \ref{prop:Y5} does indeed the required job.  We leave 
this approach as an exercise to the interested reader, and we just
invoke here a simple maximality argument.

\begin{lemma}    \label{lem:46}
Consider the partial order given by $\ll$ on $NC(m;c)$.
Every maximal element of $( NC(m;c) , \ll )$ has the 
VNRP property.
\end{lemma}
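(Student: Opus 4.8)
The plan is to argue by contraposition: I will show that if $\tau \in NC(m;c)$ fails VNRP, then $\tau$ is not maximal for $\ll$, by exhibiting a partition $\tau' \in NC(m;c)$ with $\tau' \gg \tau$ and $\tau' \neq \tau$. Since $\tau$ fails VNRP, there is an inner block $V \in \tau$ with $c(\Parent_{\tau}(V)) = c(V)$; write $W := \Parent_{\tau}(V)$. The idea is to merge $V$ into $W$ — more precisely, to build $\tau'$ by fusing $W$, $V$, and all blocks of $\tau$ that are $\leqnest$-between $V$ and $W$ into a single block, while leaving every other block of $\tau$ untouched.

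The cleanest way to realize this is via the machinery of Section 3.1. I would define a block-projection map $\Phi : \tau \to \tau$ by taking $\fM := \tau \setminus \{V\}$ in Lemma \ref{lemma:34}: since $V$ is inner, $\fM$ still contains all outer blocks of $\tau$, so there is a unique block-projection $\Phi$ with $\Ran(\Phi) = \fM$. Concretely $\Phi$ fixes every block except $V$, and sends $V$ to $\Parent_{\tau}(V) = W$. By Proposition \ref{prop:36} there is a unique $\tau' \in NC(m)$ with $\tau' \gg \tau$ parametrized by $\Phi$; since $V \notin \Ran(\Phi)$ while $V \in \tau$, the map $\Phi$ is not the identity, so $\tau' \neq \tau$. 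It remains only to check $\tau' \in NC(m;c)$, and this is exactly what Proposition \ref{prop:Y3} delivers: we need $c(\Phi(A)) = c(A)$ for every $A \in \tau$, which is automatic for $A \neq V$ (as $\Phi(A) = A$) and holds for $A = V$ precisely because $c(\Phi(V)) = c(W) = c(V)$, our standing hypothesis. Hence $\tau'$ witnesses that $\tau$ is not $\ll$-maximal, and the contrapositive is established.

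The main point to be careful about is not any deep obstacle but rather making sure the bookkeeping with $\Phi$ is exactly right — in particular that $\fM = \tau \setminus \{V\}$ really does contain all outer blocks (which is where the inner-ness of $V$ is used) and that the block $X_j$ of $\tau'$ produced by formula (\ref{eqn:36a}) from $\Phi^{-1}(W) = \{W, V\}$ is genuinely a new, strictly larger block, so $\tau' \neq \tau$. Once Propositions \ref{prop:36} and \ref{prop:Y3} are invoked, everything else is immediate, so the proof is short. (Alternatively, one could avoid Proposition \ref{prop:36} entirely and just write down $\tau'$ by hand as the partition obtained from $\tau$ by replacing $W$ and $V$ with $W \cup V$; one then verifies directly that $W \cup V$ is a block of a non-crossing partition and that $\tau \ll \tau'$, but routing through the block-projection formalism keeps the verification essentially automatic.)
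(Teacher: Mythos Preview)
Your proof is correct and follows essentially the same approach as the paper: argue by contraposition, pick an inner block $V$ with $c(\Parent_\tau(V)) = c(V)$, and merge $V$ with its parent to produce a strictly larger $\tau' \gg \tau$ in $NC(m;c)$. The only difference is in the verification: the paper carries out the merge directly and cites a lemma from \cite{BeNi2008} for the fact that the result lies in $NC(m;c)$ with $\tau \ll \tau'$, whereas you route the same construction through the block-projection formalism (Lemma~\ref{lemma:34}, Proposition~\ref{prop:36}, Proposition~\ref{prop:Y3}), which has the minor advantage of being self-contained within the paper.
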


\begin{proof}  We prove the contrapositive: if 
$\pi \in NC(m;c)$ does not have VNRP, then it cannot be a 
maximal element with respect to $\ll$.  Indeed, 
let us pick a $\pi \in NC(m;c)$ without VNRP, and let 
$V, V'$ be blocks of $\pi$ such that 
$V' = \mbox{Parent}_{\pi} (V)$ but $V, V'$ have the same colour.  
Let $\rho$ be the partition of $\{1, \ldots , n \}$ which is 
obtained out of $\pi$ by joining together the blocks $V$ and $V'$.  
Then $\rho \in NC(m;c)$ and $\pi \ll \rho$ (this is a slight 
modification of Lemma 6.4.3 from \cite{BeNi2008}), showing in 
particular that $\pi$ is not maximal with respect to $\ll$.
\end{proof}

$\ $

\begin{remark}   \label{rem:46}
We leave it as an exercise to the reader to check that the 
converse of Lemma \ref{lem:46} is true as well: the maximal 
elements of $( NC(m;c), \ll )$ are precisely the partitions 
which have VNRP with respect to the colouring $c$.
\end{remark}

$\ $

\begin{ad-hoc}
{\bf Proof of Theorem \ref{thm:131}.}  
We fix a partition $\sigma \in NC(m;c)$ and we have to prove that 
there exists a $\tau \in NC(m;c)$, uniquely determined, such that 
$\sigma \ll \tau$ and such that $\tau$ has VNRP with respect to $c$.
And indeed: the uniqueness of $\tau$ with the required properties 
follows from Proposition \ref{prop:Y5}.  On the other hand, the 
existence of $\tau$ follows from Lemma \ref{lem:46} and the fact 
that $\sigma$ must have a majorant which is maximal with respect 
to $\ll$.
\hfill $\square$
\end{ad-hoc}

$\ $

\section{Free independence in terms of Boolean cumulants, via VNRP}
\label{sec:Lemmas}

Based on Theorem \ref{thm:131}, one gets the characterization of 
free independence in terms of Boolean cumulants which was announced 
in Theorem \ref{thm:141} from the Introduction.

$\ $

\begin{ad-hoc}
{\bf Proof of Theorem \ref{thm:141}.}
Recall that in this theorem we are given a noncommutative probability 
space $( \cA , \varphi )$ and some unital subalgebras 
$\cA_1,\ldots,\cA_s\subseteq \cA$, and we have to prove the 
equivalence of two statements $(1)$ and $(2)$ concerning  
$\cA_1, \ldots, \cA_s$.
Throughout the proof we use the notation 
$( \beta_n : \cA^n \to \bC )_{n=1}^{\infty}$ and respectively 
$( \kappa_n : \cA^n \to \bC )_{n=1}^{\infty}$ for the families 
of Boolean and respectively free cumulant functionals of 
$( \cA , \varphi )$.

\vspace{6pt}

{\em Proof that $(1) \Rightarrow (2)$.}
Here we know that 
$\cA_1,\ldots,\cA_s$ are freely independent with respect to 
$\varphi$.  We consider an $n \in \bN$, a colouring 
$c : \{ 1, \ldots , n \} \to \{ 1, \ldots , s \}$, and some 
elements $a_1 \in \cA_{c(1)}, \ldots a_n \in \cA_{c(n)}$, and 
we have to prove that 
\begin{equation}    \label{eqn:51a}
\beta_n (a_1, \ldots , a_n) = 
\sum_{  \begin{array}{c}
{\scriptstyle \pi \in NC(n;c),}  \
{\scriptstyle \pi \ll 1_n ,} \\
{\scriptstyle \pi \ has \ VNRP}
\end{array}  } 
\ \prod_{V \in \pi} \beta_{|V|} ( (a_1, \ldots , a_n) \mid V ).
\end{equation}
To that end, we write
\begin{align*}
\beta_n (a_1, \ldots , a_n)
& = \sum_{\pi \in NC(n), \ \pi \ll 1_n}
\ \prod_{V \in \pi} \kappa_{|V|} ( (a_1, \ldots , a_n) \mid V)   \\
& = \sum_{ \pi \in NC(n,c), \ \pi \ll 1_n} 
\ \prod_{V \in \pi} \kappa_{|V|} ( (a_1, \ldots , a_n) \mid V),
\end{align*}
where at the first equality sign we used the formula expressing 
Boolean cumulants in terms of free cumulants, and at the second
equality sign we used the vanishing of mixed free cumulants with 
entries from the free subalgebras $\cA_1, \ldots, \cA_s$.

We now invoke Theorem \ref{thm:131}, and group the partitions 
``$\pi \in NC(n,c), \ \pi \ll 1_n$'' which index the latter sum
according to the unique $\rho \in NC(n;c)$ such that $\pi \ll \rho$
and $\rho$ has VNRP with respect to $c$.  When doing so, we continue 
the above equalities with
\[
= \sum_{  \begin{array}{c}
{\scriptstyle \rho \in NC(n,c),\ \rho \ll 1_n ,}  \\
{\scriptstyle \rho \ has \ VNRP}
\end{array}  } \, \Bigl( \ \sum_{\pi \ll \rho}
\prod_{V \in \pi} \kappa_{|V|} 
((a_1, \ldots , a_n) \mid V) \, \Bigr).
\]
Finally, in the latter double sum we note that the inside summation
comes to 

\noindent
$\prod_{V \in \rho} \beta_{|V|} ((a_1, \ldots , a_n) \mid V)$ (again 
due to how Boolean cumulants are expressed in terms of free cumulants).  
This leads precisely to the formula for $\beta_n (a_1, \ldots , a_n)$ 
that was stated in Equation (\ref{eqn:51a}).

\vspace{6pt}

\noindent
{\em Proof that $(2) \Rightarrow (1)$.}
In order to prove that $\cA_1,\ldots,\cA_s$ are free, we consider
the free product $\left(\widetilde{\cA},\widetilde{\varphi}\right)$
= $\ast\left(\cA_i,\varphi|_{\cA_i}\right)_{i=1,\ldots,s}$. Then 
for any $a_1,\ldots,a_n$ such that 
$a_k\in\cA_{i(k)}$, formula \eqref{eqn:51a} holds for Boolean 
cumulants related with $\varphi$, denoted by $\beta_{n}^{\varphi}$ 
by assumption. On the other hand \eqref{eqn:51a} holds also for 
$\beta_{n}^{\widetilde{\varphi}}$, i.e. Boolean cumulants related 
with $\widetilde{\varphi}$, since $a_1,\ldots,a_n$ are free wrt 
$\widetilde{\varphi}$ hence the implication $(1) \Rightarrow (2)$, 
proved above, can be applied.  Since for any $a_1,\ldots,a_n$ 
such that $a_k\in\cA_{i(k)}$ we have 
$\beta_{n}^{\varphi}(a_1,\ldots,a_n)=
\beta_{n}^{\widetilde{\varphi}}(a_1,\ldots,a_n)$, 
then also all joint moments with respect to $\varphi$ and 
$\widetilde{\varphi}$ coincide. Since $\cA_1,\ldots,\cA_s$ are free 
with respect to $\widetilde{\varphi}$, we get that $\cA_1,\ldots,\cA_s$ 
are free with respect to $\varphi$.
\hfill $\square$
\end{ad-hoc}

\begin{example}   \label{ex:42}
For the sake of clarity, we give a concrete example of how the 
formula (\ref{eqn:51a}) works.  Let $\cA_1, \cA_2$ be freely 
independent subalgebras of $\cA$. Suppose we pick elements 
$a, a', a'' \in \cA_1$ and $b, b', b'' \in \cA_2$, and 
we are interested in the Boolean cumulant
$\beta_6 ( a,b, a', b', b'', a'')$.

\begin{minipage}{1\linewidth}
\begin{center}
\begin{tikzpicture}
			\draw[red,thick] (-6,0) -- (-6,0.25);
			\draw[blue,thick] (-4,0) -- (-4,0.25);
			\draw[red,thick] (-8,0) to[out=90,in=90] node [sloped,above] {} (-3,0);
			\draw[blue,thick] (-7,0) to[out=90,in=90] node [sloped,above] {} (-5,0);
				
			\node[below,red] (1) at (-8,0) {$a$};
			\node[below,blue] (2) at (-7,0) {$b$};
			\node[below,red] (3) at (-6,0) {$a'$};
			\node[below,blue] (4) at (-5,0) {$b'$};
			\node[below,blue] (5) at (-4,0) {$b''$};
			\node[below,red] (6) at (-3,0) {$a''$};

			\draw[red,thick] (2,0) -- (2,0.25);
			\draw[red,thick] (0,0) to[out=90,in=90] node [sloped,above] {} (5,0);
			\draw[blue,thick] (1,0) to[out=90,in=90] node [sloped,above] {} (3,0);
			\draw[blue,thick] (3,0) to[out=90,in=90] node [sloped,above] {} (4,0);

			\node[below,red] (7) at (0,0) {$a$};
			\node[below,blue] (8) at (1,0) {$b$};
			\node[below,red] (9) at (2,0) {$a'$};
			\node[below,blue] (10) at (3,0) {$b'$};
			\node[below,blue] (11) at (4,0) {$b''$};
			\node[below,red] (12) at (5,0) {$a''$};
\end{tikzpicture}
\end{center}
\end{minipage}

\begin{minipage}{1\linewidth}
\begin{center}
\begin{tikzpicture}
		\draw[blue,thick] (-4,0) -- (-4,0.25);
		\draw[blue,thick] (-7,0) -- (-7,0.25);
		\draw[blue,thick] (-5,0) -- (-5,0.25);
		\draw[red,thick] (-8,0) to[out=90,in=90] node [sloped,above] {} (-6,0);
		\draw[red,thick] (-6,0) to[out=90,in=90] node [sloped,above] {} (-3,0);
		
		\node[below,red] (1) at (-8,0) {$a$};
		\node[below,blue] (2) at (-7,0) {$b$};
		\node[below,red] (3) at (-6,0) {$a'$};
		\node[below,blue] (4) at (-5,0) {$b'$};
		\node[below,blue] (5) at (-4,0) {$b''$};
		\node[below,red] (6) at (-3,0) {$a''$};

		\draw[blue,thick] (1,0) -- (1,0.25);
		\draw[red,thick] (0,0) to[out=90,in=90] node [sloped,above] {} (2,0);
		\draw[red,thick] (2,0) to[out=90,in=90] node [sloped,above] {} (5,0);
		\draw[blue,thick] (3,0) to[out=90,in=90] node [sloped,above] {} (4,0);

		\node[below,red] (7) at (0,0) {$a$};
		\node[below,blue] (8) at (1,0) {$b$};
		\node[below,red] (9) at (2,0) {$a'$};
		\node[below,blue] (10) at (3,0) {$b'$};
		\node[below,blue] (11) at (4,0) {$b''$};
		\node[below,red] (12) at (5,0) {$a''$};
\end{tikzpicture}
\end{center}
\end{minipage}

\begin{center}
{\bf Figure 3(a).}
{\em Some coloured partitions in $NC(6)$, with VNRP.}
\end{center}

$\ $

\noindent
We get
\begin{equation}   \label{eqn:42a}
\beta_6 (a,b,a',b',b'',a'') =
\begin{array}[t]{l}
\beta_3 (a,a',a'') \beta_1 (b) \beta_1 (b') \beta_1 (b'') 
+ \beta_3 (a,a',a'') \beta_1 (b) \beta_2 (b', b'')         \\
+ \beta_2 (a,a'') \beta_1 (a') \beta_3 (b,b',b'')         
+ \beta_2 (a,a'') \beta_1 (a') \beta_2 (b,b') \beta_1 (b'') ,
\end{array}
\end{equation}
where the four terms on the right-hand side of the above equation 
correspond to the four partitions in $NC(6)$ that are listed in 
Figure 3(a).  It is instructive to note that one also has 
two partitions in $NC(6)$, shown in Figure 3(b), which satisfy the 
colouring condition (i.e. they separate $a$'s from $b$'s in the 
tuple $(a,b,a',b',b'',a'')$) and also satisfy the requirement of 
having a unique outer block), but don't contribute to the 
sum on the right-hand side of (\ref{eqn:42a}) because they don't
have VNRP.
\end{example}

\begin{minipage}{1\linewidth}
\begin{center}
\begin{tikzpicture}
		\draw[red,thick] (-8,0) to[out=90,in=90] node [sloped,above] {} (-3,0);
		\draw[blue,thick] (-7,0) to[out=90,in=90] node [sloped,above] {} (-4,0);
		\draw[red,thick] (-6,0) -- (-6,0.25);
		\draw[blue,thick] (-5,0) -- (-5,0.25);		

		\node[below,red] (1) at (-8,0) {$a$};
		\node[below,blue] (2) at (-7,0) {$b$};
		\node[below,red] (3) at (-6,0) {$a'$};
		\node[below,blue] (4) at (-5,0) {$b'$};
		\node[below,blue] (5) at (-4,0) {$b''$};
		\node[below,red] (6) at (-3,0) {$a''$};

		\draw[red,thick] (0,0) to[out=90,in=90] node [sloped,above] {} (5,0);
		\draw[blue,thick] (3,0) to[out=90,in=90] node [sloped,above] {} (4,0);
		\draw[blue,thick] (1,0) -- (1,0.25);
		\draw[red,thick] (2,0) -- (2,0.25);		

		\node[below,red] (7) at (0,0) {$a$};
		\node[below,blue] (8) at (1,0) {$b$};
		\node[below,red] (9) at (2,0) {$a'$};
		\node[below,blue] (10) at (3,0) {$b'$};
		\node[below,blue] (11) at (4,0) {$b''$};
		\node[below,red] (12) at (5,0) {$a''$};
\end{tikzpicture}
\end{center}
\end{minipage}

\begin{center}
{\bf Figure 3(b).} 
{\em Some coloured partitions in $NC(6)$, without VNRP.}
\end{center}

$\ $

\begin{remark}   \label{rem:52}
It is useful to note a special situation when we are sure to 
get ``vanishing of mixed Boolean cumulants with free arguments'': 
consider the setting of Theorem \ref{thm:141}, and let
$c : \{ 1, \ldots , n \} \to \{ 1, \ldots , s \}$
be a colouring such that $c(1) \neq c(n)$.  Then for 
every $a_1 \in \cA_{c(1)}, \ldots a_n \in \cA_{c(n)}$, 
one has that $\beta_n (a_1, \ldots , a_n) = 0$.  
Indeed, in this special case the index set for the 
summation on the right-hand side of Equation (\ref{eqn:51a}) 
is the empty set.
\end{remark}

$\ $

In the remaining part of this section, we record some easy 
consequences of Theorem \ref{thm:141}.  First, we note that 
from Equation (\ref{eqn:141a}) one can derive a formula for 
moments -- this is precisely the $\bC$-valued case of the 
moment formula found in Proposition 4.30 of \cite{JeLi2019}, 
and is stated as follows.

\begin{corollary}   \label{cor:53}
For every $n \in \bN$, every colouring 
$c : \{ 1, \ldots , n \} \to \{ 1, \ldots , s \}$, and every 
$a_1 \in \cA_{c(1)}, \ldots a_n \in \cA_{c(n)}$, one has
\begin{equation}    \label{eqn:53a}
\varphi (a_1 \cdots a_n) = 
\sum_{  \begin{array}{c}
{\scriptstyle \pi \in NC(n;c)} \\
{\scriptstyle with \ VNRP}
\end{array} } \ \prod_{V \in \pi} 
\ \beta_{|V|} ( (a_1, \ldots , a_n) \mid V ).
\end{equation}
\end{corollary}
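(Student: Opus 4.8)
The plan is to start from the defining relation of Boolean cumulants, Equation (\ref{eqn:26c}), which writes $\varphi(a_1\cdots a_n)$ as a sum over interval partitions $\rho\in\Int(n)$ of the products $\prod_{J\in\rho}\beta_{|J|}\bigl((a_1,\ldots,a_n)\mid J\bigr)$, and then to substitute into each of these factors the expansion of a Boolean cumulant with free arguments provided by Equation (\ref{eqn:51a}) of Theorem \ref{thm:141}. For a fixed block $J\in\rho$, the tuple $(a_1,\ldots,a_n)\mid J$ (identified with an $|J|$-tuple in the obvious way, the colouring being the restriction of $c$ to $J$) still has its entries in the free subalgebras, so (\ref{eqn:51a}) rewrites $\beta_{|J|}\bigl((a_1,\ldots,a_n)\mid J\bigr)$ as a sum over the partitions $\pi_J\in NC(J;c)$ having a unique outer block and VNRP. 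Expanding the resulting product over $J\in\rho$ by distributivity turns $\varphi(a_1\cdots a_n)$ into a single sum indexed by the families $\bigl(\rho,(\pi_J)_{J\in\rho}\bigr)$, the term attached to such a family being $\prod_{J\in\rho}\prod_{V\in\pi_J}\beta_{|V|}\bigl((a_1,\ldots,a_n)\mid V\bigr)$.

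The key step is to match these families with the partitions indexing the right-hand side of (\ref{eqn:53a}). To a family $\bigl(\rho,(\pi_J)_{J\in\rho}\bigr)$ I associate $\pi:=\bigcup_{J\in\rho}\pi_J$; this is a non-crossing partition of $\{1,\ldots,n\}$ because the blocks of $\rho$ are pairwise disjoint intervals on each of which $\pi_J$ is non-crossing, and it lies in $NC(n;c)$ because every block of every $\pi_J$ is monochromatic. Conversely, an arbitrary $\pi\in NC(n;c)$ decomposes uniquely in this fashion: take $\rho$ to be the interval closure $\overline{\pi}$ from Remark \ref{rem:25} and set $\pi_J:=\{V\in\pi\mid V\subseteq J\}$ for each $J\in\rho$. (Here one uses the elementary fact that, since $\pi$ is non-crossing, every block of $\pi$ — and, for an inner block, also its $\Parent_{\pi}$ — is contained in $[\min W,\max W]$ for a suitable outer block $W$ of $\pi$.) Thus $\pi\mapsto\bigl(\overline{\pi},(\pi_J)_J\bigr)$ is a bijection onto the set of admissible families, and it remains to observe that under it $\pi$ has VNRP if and only if each $\pi_J$ does. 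This is because the outer blocks of $\pi_J$ are precisely the outer blocks of $\pi$ contained in $J$, and for an inner block $V$ of $\pi$ one has $\Parent_{\pi}(V)=\Parent_{\pi_J}(V)$ with $J$ the block of $\overline{\pi}$ containing $V$; hence the inequality $c(\Parent(V))\neq c(V)$ required by Definition \ref{def:41} holds for all inner blocks of $\pi$ exactly when it holds, separately on each $J$, for all the $\pi_J$.

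Putting the two steps together, the sum over families $\bigl(\rho,(\pi_J)_{J\in\rho}\bigr)$ collapses exactly to the sum over $\pi\in NC(n;c)$ with VNRP, with matching contributions $\prod_{V\in\pi}\beta_{|V|}\bigl((a_1,\ldots,a_n)\mid V\bigr)$, which is the asserted identity (\ref{eqn:53a}). The only genuinely combinatorial input is the bijection above together with the locality of the parent-block relation within an outer region of a non-crossing partition; everything else is formal substitution into (\ref{eqn:26c}). One may equally run the argument in the opposite direction, as indicated in Remark \ref{rem:142}(2): starting from (\ref{eqn:53a}), split each VNRP partition according to its interval closure, recognize the inner sum (over partitions of a given interval block with prescribed unique outer block and VNRP) as precisely the right-hand side of (\ref{eqn:51a}), and then invoke (\ref{eqn:26c}) to recover $\varphi(a_1\cdots a_n)$.
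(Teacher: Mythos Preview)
Your proof is correct and follows exactly the approach sketched in the paper's one-line proof (``perform an additional summation over interval partitions on both sides of Equation (\ref{eqn:141a})''): you expand $\varphi(a_1\cdots a_n)$ via (\ref{eqn:26c}), substitute (\ref{eqn:51a}) into each factor, and then identify the resulting double sum with the sum over VNRP partitions via the bijection $\pi\leftrightarrow(\overline{\pi},(\pi_J)_J)$. The paper leaves this bijection implicit, but your detailed verification that each $\pi_J$ has a unique outer block (namely the outer block $W$ of $\pi$ with $J=[\min W,\max W]$) and that the parent-block relation is local to each $J$ is exactly what is needed to make the argument rigorous.
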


\begin{proof} Perform an additional summation over
interval partitions on both sides of 
Equation (\ref{eqn:141a}), and use the formula which
expresses moments in terms of Boolean cumulants.
\end{proof}

A basic fact concerning free cumulants is that 
$\kappa_n (a_1, \ldots , a_n) = 0$ whenever $n \geq 2$ and 
there exists an $m \in \{ 1, \ldots , n \}$ such that $a_m$ 
is a scalar multiple of the unit.  The next corollary gives 
the analogue of this fact when one uses Boolean cumulants.

\begin{corollary}    \label{cor:54}
Let $( \cA , \varphi )$ be a noncommutative probability
space and let $( \beta_n : \cA^n \to \bC )_{n=1}^{\infty}$ 
be the family of Boolean cumulant functionals associated to 
it.  Let $a_1, \ldots , a_n$ be elements of $\cA$, where 
$n \geq 2$, and suppose we are given an index 
$m \in \{ 1, \ldots , n \}$ for which it is known that 
$a_m = \oneA$.  Then
\begin{equation}   \label{eqn:54a}
\beta_n ( a_1, \ldots , a_n) = \left\{  
\begin{array}{ll}
0, & \mbox{ if $m=1$ or $m=n$,}            \\
\beta_{n-1} (a_1, \ldots , a_{m-1}, a_{m+1}, \ldots , a_n),
& \mbox{ if $1 < m < n$.}
\end{array}  \right.
\end{equation} 
\end{corollary}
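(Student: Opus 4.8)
The plan is to argue by strong induction on $n$, using the recursive form of the moment--cumulant relation (\ref{eqn:26c}): isolating the term indexed by $1_n$ gives, for all $n\geq 1$ and all $b_1,\ldots,b_n\in\cA$,
\[
\beta_n(b_1,\ldots,b_n)=\varphi(b_1\cdots b_n)-\sum_{\substack{\pi\in\Int(n)\\ \pi\neq 1_n}}\ \prod_{V\in\pi}\beta_{|V|}\bigl((b_1,\ldots,b_n)\mid V\bigr),
\]
exactly as in (\ref{eqn:27a}) but with $\Int(n)$ replacing $NC(n)$ (cf.\ Remark \ref{rem:27}). The base case $n=2$ is immediate: $\beta_2(\oneA,a)=\varphi(a)-\beta_1(\oneA)\beta_1(a)=0$ and likewise $\beta_2(a,\oneA)=0$, while there is no $m$ with $1<m<2$. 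For the inductive step one fixes $n\geq 3$, assumes the statement at all smaller levels, and treats $m=1$, $m=n$ and $1<m<n$ separately, using in each case only the induction hypothesis at levels $<n$ together with $\beta_1(\oneA)=\varphi(\oneA)=1$.

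If $m=1$, so $a_1=\oneA$, group the partitions $\pi\neq 1_n$ in the sum above by their first block $\{1,\ldots,k\}$; here $1\leq k\leq n-1$. For $k\geq 2$ the factor attached to this block is $\beta_k(\oneA,a_2,\ldots,a_k)=0$ by the induction hypothesis, so those terms drop out. For $k=1$ the factor is $\beta_1(\oneA)=1$ and the remaining blocks run over all of $\Int(\{2,\ldots,n\})$, contributing $\varphi(a_2\cdots a_n)$. Since $\varphi(a_1\cdots a_n)=\varphi(a_2\cdots a_n)$, the recursion gives $\beta_n(\oneA,a_2,\ldots,a_n)=0$. The case $m=n$ is symmetric, grouping instead by the last block.

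Now let $1<m<n$ and put $\tilde a=(a_1,\ldots,a_{m-1},a_{m+1},\ldots,a_n)$. Since $a_m=\oneA$ one has $\varphi(a_1\cdots a_n)=\varphi(\tilde a_1\cdots\tilde a_{n-1})$, so by the recursion it suffices to prove
\[
\sum_{\substack{\pi\in\Int(n)\\ \pi\neq 1_n}}\ \prod_{V\in\pi}\beta_{|V|}\bigl((a_1,\ldots,a_n)\mid V\bigr)
=\sum_{\substack{\pi'\in\Int(n-1)\\ \pi'\neq 1_{n-1}}}\ \prod_{V'\in\pi'}\beta_{|V'|}\bigl(\tilde a\mid V'\bigr).
\]
Let $V_0\in\pi$ be the block containing the position $m$. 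If $|V_0|\geq 2$ and $m$ is the least or greatest element of $V_0$, then $\beta_{|V_0|}((a_1,\ldots,a_n)\mid V_0)$ has the form $\beta_k(\oneA,\ldots)$ or $\beta_k(\ldots,\oneA)$ with $2\leq k=|V_0|\leq n-1$, hence vanishes by the induction hypothesis; the remaining $\pi$ --- those with $V_0=\{m\}$ or with $\min(V_0)<m<\max(V_0)$ --- we call \emph{surviving}. For a surviving $\pi$, the factor at $V_0$ is $\beta_1(\oneA)=1$ when $V_0=\{m\}$, and otherwise (by the interior case of the induction hypothesis, at level $|V_0|\leq n-1$) equals $\beta_{|V_0|-1}$ of the tuple $(a_i)_{i\in V_0}$ with $a_m$ removed. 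Deleting position $m$ and relabelling $\{1,\ldots,n\}\setminus\{m\}$ onto $\{1,\ldots,n-1\}$ then carries a surviving $\pi$ to some $\Psi(\pi)\in\Int(n-1)$, and the preceding observations give $\prod_{V\in\pi}\beta_{|V|}((a_1,\ldots,a_n)\mid V)=\prod_{V'\in\Psi(\pi)}\beta_{|V'|}(\tilde a\mid V')$. So the displayed identity reduces to the claim that $\Psi$ is a bijection from the surviving partitions onto $\Int(n-1)\setminus\{1_{n-1}\}$.

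This last bookkeeping is where the real care is needed, and is the step I expect to be the main obstacle. The point to verify is that a given $\pi'\in\Int(n-1)$ with $\pi'\neq 1_{n-1}$ has exactly one surviving preimage under $\Psi$: if positions $m-1$ and $m$ lie in the same block of $\pi'$ one re-inserts $m$ into the relabelled copy of that block (making $m$ interior); if they lie in different blocks one re-inserts $\{m\}$ as a singleton. The two other ways of re-inserting $m$ in the second case --- as a new least element of one block or a new greatest element of the other --- produce precisely the non-surviving partitions that were already discarded, so no terms are lost or double-counted. One also notes that $1_n$ is not surviving, and that no surviving $\pi$ can map to $1_{n-1}$, since a block $\{1,\ldots,n-1\}$ of $\Psi(\pi)$ would force $\pi$ to contain $\{1,\ldots,n\}\setminus\{m\}$, which is not an interval when $1<m<n$; this is what pins down the index set of the right-hand sum as $\Int(n-1)\setminus\{1_{n-1}\}$ and completes the induction.
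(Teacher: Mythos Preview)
Your proof is correct but takes a genuinely different route from the paper's.  The paper derives the statement as an immediate corollary of Theorem~\ref{thm:141}: take $\cA_1=\cA$ and $\cA_2=\bC\,\oneA$ (trivially free), colour position $m$ with colour~$2$ and all others with colour~$1$, and apply (\ref{eqn:141a}).  For $m\in\{1,n\}$ the index set is empty by Remark~\ref{rem:52}; for $1<m<n$ the only partition in $NC(n;c)$ with a unique outer block and VNRP is $\bigl\{\{1,\ldots,n\}\setminus\{m\},\{m\}\bigr\}$, and its contribution is $\beta_{n-1}(a_1,\ldots,a_{m-1},a_{m+1},\ldots,a_n)\cdot\beta_1(\oneA)$.

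Your argument is more elementary: strong induction based solely on the Boolean moment--cumulant recursion, with no reference to free independence or VNRP.  What this buys is self-containment --- the result is shown to be a feature of Boolean cumulants per se, independent of the free-probability machinery developed earlier in the paper.  The price is the bookkeeping bijection $\Psi$ in the interior case, which the paper's approach avoids entirely.

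One small polish for your last paragraph: the clause ``would force $\pi$ to contain $\{1,\ldots,n\}\setminus\{m\}$'' only addresses the singleton-$V_0$ case.  When $m$ is interior to $V_0$, the map $\Psi$ preserves the number of blocks, so $\Psi(\pi)=1_{n-1}$ would force $\pi=1_n$, which is already excluded.  A cleaner way to phrase the whole point is simply that $1_n$ itself satisfies the ``surviving'' criterion and maps under $\Psi$ to $1_{n-1}$; hence removing $1_n$ from the domain corresponds exactly to removing $1_{n-1}$ from the codomain, and $\Psi$ restricts to the claimed bijection.
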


\begin{proof}
Consider the unital subalgebras $\cA_1, \cA_2$ of $\cA$ defined 
by $\cA_1 = \cA$ and $\cA_2 = \bC \oneA$, and consider the 
colouring $c : \{ 1, \ldots , n \} \to \{ 1,2 \}$ defined by
\[
c(i) = \left\{  \begin{array}{ll}
1, & \mbox{ if $i \in \{ 1, \ldots , n \} \setminus \{ m \}$,} \\
2, & \mbox{ if $i = m$.}
\end{array}  \right.
\]
We then have $a_i \in \cA_{c(i)}$ for all $1 \leq i \leq n$, 
with $\cA_1$ being freely independent of $\cA_2$, hence
Theorem \ref{thm:141} applies to this situation and gives us 
a formula for $\beta_n (a_1, \ldots , a_n)$.  If $m=1$ or 
$m=n$, then $\beta_n (a_1, \ldots , a_n) = 0$ by 
Remark \ref{rem:52}.  If $1 < m < n$, then an immediate 
inspection shows that the only partition $\pi \in NC(n)$ 
with unique outer block and with VNRP is 
\[
\pi = \bigl\{ \, \{ 1, \ldots , m-1, m+1, \ldots , n \},
\, \{ m \} \, \bigr\} .
\]
Thus the sum on the right-hand side of (\ref{eqn:51a}) 
has in this case only one term, which is as indicated 
in (\ref{eqn:54a}) above (since 
$\beta_1 (a_m) = \beta_1 ( \oneA ) = 1$).
\end{proof}

$\ $

From Theorem \ref{thm:141} one can also get an explicit 
description of the Boolean cumulants of the sum of two 
free elements, as follows.

\begin{proposition}    \label{prop:55}
Let $( \cA , \varphi )$ be a noncommutative probability 
space and let $a,b \in \cA$ be such that $a$ is freely 
independent from $b$.  Consider the sequences of 
Boolean cumulants $( \beta_n (a) )_{n=1}^{\infty}$ and
$( \beta_n (b) )_{n=1}^{\infty}$ for $a$ and for $b$, 
respectively.  Then for every $n \geq 1$, the $n$-th 
Boolean cumulant of $a + b$ is 
\begin{equation}    \label{eqn:55a}
\beta_n (a+b) = 
\sum_{ \begin{array}{c}
{\scriptstyle \pi \in NC(n),} \\
{\scriptstyle \pi \ll 1_n}
\end{array} } 
\ \Bigl( \prod_{ \begin{array}{c}
{\scriptstyle U \in \pi, \ with}  \\
{\scriptstyle \depth_{\pi} (U) \ even} 
\end{array} } \
\beta_{|U|} (a)\Bigr)  \cdot
\Bigl( \prod_{ \begin{array}{c}
{\scriptstyle V \in \pi, \ with}  \\
{\scriptstyle \depth_{\pi} (V) \ odd} 
\end{array} } \ \beta_{|V|} (b) \Bigr) 
\end{equation}
\[ 
+ \ \sum_{ \begin{array}{c}
{\scriptstyle \pi \in NC(n),} \\
{\scriptstyle \pi \ll 1_n}
\end{array} } 
\ \Bigl( \prod_{ \begin{array}{c}
{\scriptstyle U \in \pi, \ with}  \\
{\scriptstyle \depth_{\pi} (U) \ even} 
\end{array} } \
\beta_{|U|} (b) \Bigr)  \cdot
\Bigl( \prod_{ \begin{array}{c}
{\scriptstyle V \in \pi, \ with}  \\
{\scriptstyle \depth_{\pi} (V) \ odd} 
\end{array} } \ \beta_{|V|} (b) \Bigr) .
\]
\end{proposition}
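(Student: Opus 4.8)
The plan is to derive (\ref{eqn:55a}) by a brute-force multilinear expansion that reduces everything to Theorem \ref{thm:141}. First I would expand $\beta_n(a+b) = \beta_n(a+b, \ldots , a+b)$, using multilinearity of $\beta_n$ in each of its $n$ arguments, as a sum over colourings of the slots:
\[
\beta_n(a+b) = \sum_{c : \{1,\ldots,n\} \to \{1,2\}} \beta_n(y_1, \ldots , y_n), \qquad y_i := \begin{cases} a, & \text{if } c(i) = 1,\\ b, & \text{if } c(i) = 2.\end{cases}
\]
Let $\cA_1$ and $\cA_2$ be the unital subalgebras of $\cA$ generated by $a$ and by $b$, respectively; since $a$ is free from $b$, these are free with respect to $\varphi$, and $y_i \in \cA_{c(i)}$ for every $i$. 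Hence Theorem \ref{thm:141} (the implication $(1)\Rightarrow(2)$), applied with $s = 2$ to each colouring $c$ and the corresponding tuple, gives
\[
\beta_n(y_1,\ldots,y_n) = \sum_{\substack{\pi \in NC(n;c),\ \pi \ll 1_n,\\ \pi \text{ has VNRP w.r.t. } c}} \ \prod_{V \in \pi} \beta_{|V|}\bigl((y_1,\ldots,y_n)\mid V\bigr).
\]
Because $a$ and $b$ are single elements and $c$ is constant on each block $V$ of $\pi$, every factor $\beta_{|V|}((y_1,\ldots,y_n)\mid V)$ collapses to $\beta_{|V|}(a)$ if $c(V)=1$ and to $\beta_{|V|}(b)$ if $c(V)=2$.

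Next I would interchange the order of summation, grouping the resulting terms by the partition $\pi$ instead of by the colouring $c$: the admissible pairs $(c,\pi)$ above are exactly the pairs $(\pi,c)$ with $\pi \in NC(n)$, $\pi \ll 1_n$, $c$ constant on the blocks of $\pi$, and $\pi$ having VNRP with respect to $c$. The one point that genuinely needs an argument is a counting statement: for a fixed $\pi \in NC(n)$ with $\pi \ll 1_n$ (equivalently, with a unique outer block $W$, by Definition and Remark \ref{def:31}(2)), there are exactly two colourings $c:\{1,\ldots,n\}\to\{1,2\}$ that are constant on the blocks of $\pi$ and make $\pi$ satisfy VNRP. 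Indeed, for $s=2$ the VNRP condition $c(\Parent_{\pi}(V)) \neq c(V)$ for every inner block $V$ is a vertical-alternance condition; starting from the unique outer block $W$ and propagating along the block-nesting chains by induction on depth, a choice of $c(W)\in\{1,2\}$ forces $c(V)=c(W)$ whenever $\depth_{\pi}(V)$ is even and $c(V)\neq c(W)$ whenever $\depth_{\pi}(V)$ is odd; this assignment is always consistent, since $\pi$ has a single outer block, so the block-nesting forest is a tree and there is no competing constraint. Thus the two admissible colourings are the one sending every even-depth block to colour $1$ (hence every odd-depth block to colour $2$) and its colour-swap — which is precisely the observation recorded just after Definition \ref{def:153}.

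Substituting this count back, the total contribution of a fixed $\pi$ with $\pi\ll 1_n$ is the sum of the two products
\[
\prod_{\substack{U \in \pi\\ \depth_{\pi}(U)\text{ even}}}\!\!\beta_{|U|}(a)\cdot\!\!\prod_{\substack{V \in \pi\\ \depth_{\pi}(V)\text{ odd}}}\!\!\beta_{|V|}(b) \quad\text{and}\quad \prod_{\substack{U \in \pi\\ \depth_{\pi}(U)\text{ even}}}\!\!\beta_{|U|}(b)\cdot\!\!\prod_{\substack{V \in \pi\\ \depth_{\pi}(V)\text{ odd}}}\!\!\beta_{|V|}(a),
\]
and summing over all $\pi \in NC(n)$ with $\pi \ll 1_n$ gives exactly the two sums displayed in (\ref{eqn:55a}). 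I expect no serious obstacle: the heavy lifting is done by Theorem \ref{thm:141}, and what remains is the bookkeeping of the sum interchange together with the elementary verification that ``constant on blocks plus VNRP with two colours'' pins down the colouring up to the colour of the outer block. As a consistency check, the formula is visibly symmetric under $a\leftrightarrow b$, which merely swaps the two sums.
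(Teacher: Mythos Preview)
Your proposal is correct and follows essentially the same approach as the paper: expand by multilinearity, apply Theorem \ref{thm:141} to each colouring, then regroup by $\pi$ using the observation that a partition $\pi \ll 1_n$ admits exactly two VNRP $2$-colourings (determined by the colour of the unique outer block). Your write-up is somewhat more explicit about the induction-on-depth argument for the two-colouring count, but the strategy is identical.
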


\begin{proof} 
We expand $\beta_n (a+b, \ldots , a+b)$ as a sum of 
$2^n$ terms by multilinearity, and then for each of the 
$2^n$ terms we use the formula for Boolean cumulants with 
free entries.  This takes us to an expression of
the form stipulated on the right-hand side of Equation 
(\ref{eqn:55a}) -- it is a large sum where every term of 
the sum is a product of Boolean cumulants of $a$ and of $b$.

We next make the following observation: for every 
$\pi \in NC(n)$ such that $\pi \ll 1_n$ there exist precisely 
two ways of colouring the blocks of $\pi$ in the colours 
``$a$'' and ``$b$'' such that VNRP holds; indeed, once we 
decide what is the colour of the unique outer block of $\pi$, 
everything else is determined.  By using this observation, 
we sort out the terms of the large sum indicated in the 
preceding paragraph, and we organize these terms into two 
separate sums, arriving precisely to the formula announced 
in (\ref{eqn:55a}).
\end{proof}

\begin{remark}   \label{rem:56}
The statement of Proposition \ref{prop:55} simplifies quite 
a bit when the two elements $a$ and $b$ have the same 
distribution.  In this case we only have one sequence 
$( \lambda_n )_{n=1}^{\infty}$, giving the Boolean cumulants 
for both $a$ and $b$, and Equation (\ref{eqn:55a}) becomes 
\begin{equation}   \label{eqn:56a}
\beta_n (a+b) = 
2 \cdot \sum_{ \begin{array}{c}
{\scriptstyle \pi \in NC(n),} \\
{\scriptstyle \pi \ll 1_n}
\end{array} } 
\ \Bigl( \prod_{ V \in \pi } \lambda_{|V|} \Bigr) .
\end{equation}
We note that Equation (\ref{eqn:56a}) can alternatively be 
obtained by using some known facts about the Boolean
Bercovici-Pata bijection introduced in \cite{BePa-Bi1999}.  
Indeed, let us apply this Bercovici-Pata bijection to the 
common distribution $\mu$ of $a$ and of $b$, and let us 
denote the resulting distribution by $\nu$.  (Here we view 
both $\mu$ and $\nu$ as linear functionals on $\bC [X]$, 
with $\nu$ being defined in terms of $\mu$ via the requirement 
that its free cumulants 
\footnote{ The free cumulant $\kappa_n ( \nu )$ is defined 
as the free cumulant $\kappa_n (X, \ldots , X)$ in the 
noncommutative probability space $( \bC [X], \nu )$, while 
the Boolean cumulant $\beta_n ( \mu )$ is defined as the 
Boolean cumulant $\beta_n (X, \ldots , X)$ in the 
noncommutative probability space $( \bC [X], \mu )$. }
satisfy 
$\kappa_n ( \nu ) = \beta_n ( \mu )$, $\forall \, n \in \bN$.)
It is known (cf. Theorem 1.2 in \cite{BeNi2008b}) 
that the Boolean cumulants of $\nu$ satisfy the relation
\begin{equation}   \label{eqn:56b}
\beta_n ( \nu ) = \frac{1}{2} \beta_n ( \mu \boxplus \mu ),
\ \ n \in \bN ,
\end{equation}
where $\mu \boxplus \mu$ (the ``free additive convolution''
of $\mu$ with itself) is the distribution of $a+b$.  On the
other hand, one can write 
\begin{equation}   \label{eqn:56c}
\beta_n ( \nu )
= \sum_{ \begin{array}{c}
{\scriptstyle \pi \in NC(n),}  \\
{\scriptstyle \pi \ll 1_n}
\end{array} }
\prod_{V \in \pi} \kappa_{|V|} ( \nu )                
= \sum_{ \begin{array}{c}
{\scriptstyle \pi \in NC(n),}  \\
{\scriptstyle \pi \ll 1_n}
\end{array} }
\prod_{V \in \pi} \beta_{|V|} ( \mu ),
\end{equation}
where the first equality in (\ref{eqn:56c}) is the identity 
expressing Boolean cumulants in terms of free cumulants. 
Putting together (\ref{eqn:56b}) and (\ref{eqn:56c}) leads to 
\begin{equation}   \label{eqn:56d}
\beta_n ( \mu \boxplus \mu ) =
2 \cdot \sum_{ \begin{array}{c}
{\scriptstyle \pi \in NC(n),} \\
{\scriptstyle \pi \ll 1_n}
\end{array} } 
\ \Bigl( \prod_{ V \in \pi } 
\beta_{|V|} ( \mu ) \Bigr) , \ \ n \in \bN ,
\end{equation}
which is a re-phrasing of (\ref{eqn:56a}).
\end{remark}
	
$\ $

\begin{remark}   \label{rem:57}
One can do a bit of further combinatorial analysis following 
to the statement of Proposition \ref{prop:55}, in order to go 
to the level of power series, and thus come up with some 
equations in $\eta$-series which can be used to obtain $\eta_{a+b}$.

More precisely, let us consider, same as in Proposition 
\ref{prop:55}, a noncommutative probability space 
$( \cA , \varphi )$, and let $a,b \in \cA$ be free.  
Remark \ref{rem:52} implies that the $\eta$-series 
$\eta_{a+b}(z) := \sum_{n=1}^{\infty} \beta_n(a+b)z^n$
splits as $\eta_{a+b}(z) =B_a(z)+B_b(z)$, where 
\begin{align*}
B_a(z)
&=\sum_{n=1}^{\infty} 
\Bigl( \sum_{c_2,\ldots,c_{n-1}\in\{a,b\}} 
\beta_n (a,c_2,\ldots,c_{n-1},a) \Bigr) z^n,    \\
B_b(z)
&=\sum_{n=1}^{\infty} 
\Bigl( \sum_{c_2,\ldots,c_{n-1}\in\{a,b\}} 
\beta_n (b,c_2,\ldots,c_{n-1},b) \Bigr) z^n .
\end{align*}
From Theorem \ref{thm:141} it follows that 
$\beta_n(a,c_2,\ldots,c_{n-1},a)$ is expressed as a sum over 
coloured non-crossing partitions with one outer block.  Upon 
sorting out terms according to what is this outer block,
one obtains the formula
\begin{align*}
B_a(z)&=\beta_1(a) z\\&+\beta_2(a)z^2\left(\sum_{m=0}^{\infty}
\left(\beta_1(b)z+\beta_2(b,b)z^2+\left(\beta_3(b,a,b)+\beta_3(b,b,b)\right)z^3
+ \ldots\right)^m \right)
\\&+\eta_3(a)z^2\left(\sum_{m=0}^{\infty}
\left(\eta_1(b)z+\eta_2(b,b)z^2+\left(\eta_3(b,a,b)+\eta_3(b,b,b)\right)z^3
+ \ldots\right)^m\right)^2 + \ldots
\end{align*}
After summing the geometric series that have appeared, and after doing
the similar calculation for $B_b (z)$, one arrives to the system 
of equations
\begin{equation}   \label{eqn:57a}
\left\{  \begin{array}{lll}
B_a(z) & = & \eta_a \left( \frac{z}{1-B_b(z)} \right) (1-B_b(z)),  \\
B_b(z) & = & \eta_b \left( \frac{z}{1-B_a(z)} \right) (1-B_a(z)).
\end{array}    \right.
\end{equation}
Solving the system (\ref{eqn:57a}) may be used as a path towards
the explicit calculation of the $\eta$-series
$\eta_{a+b}=B_a+B_b$.  This is of course not as smooth as using 
free cumulants and $R$-transforms (where one has the simplest 
possible formula, $R_{a+b} = R_a + R_b$), but we indicated how this
goes in anticipation of the analogous development for anticommutators
which we present in Section 6 below, and where free cumulants do not 
provide a simpler alternative.
\end{remark}

$\ $ 

\section{Boolean cumulants of products and of anticommutators}

This section is concerned with studying $*$--Boolean cumulants
of products of $*$--free random variables; as a byproduct of
that, we obtain the formula for Boolean cumulants of a free
anticommutator which was announced in Theorem \ref{thm:154} of
the Introduction.

For clarity, we start by discussing a concrete low-order example.

\begin{example}   \label{eg:61}
Assume that $a,b$ are $*$--free. We are interested to calculate 
\begin{equation}   \label{eqn:61a}
\beta_3 ( ab, ab, b^*a^* ) = ? \, ,
\end{equation}
where the expression sought on the right-hand side should be a
polynomial expression in the Boolean $*$--cumulants of 
$a$ and those of $b$.  We reach this goal in three steps.

\vspace{6pt}

{\em Step 1.} Use the formula for Boolean cumulants with products
as entries.

This step expresses $\beta_3 ( ab, ab, b^*a^* )$ as a sum of 
8 terms, indexed by the collection of partitions 
$\sigma \in \Int (6)$ with the property that
$\sigma \vee 
\{ \, \{ 1,2 \}, \, \{ 3,4 \}, \, \{ 5,6 \}\, \}
= 1_6$ (or, equivalently, that 
$\sigma \geq \{ \, \{ 1 \}, \, \{ 2,3 \}, \, \{ 4,5 \} \, \{ 6 \}  \, \}$).  
By keeping in mind the Remark \ref{rem:52}, we see 
that 5 of these terms are sure to be equal to $0$, and thus the 
conclusion of this step is that
\begin{equation}   \label{eqn:61b}
\beta_3 ( ab, ab, b^*a^* ) 
= \beta_1 (a) \beta_4 (b,a,b,b^{*}) \beta_1 (a^{*})
\end{equation}
\[
+ \beta_3 (a,b,a) \beta_2 (b,b^{*}) \beta_1 (a^{*})
+ \beta_6 (a,b,a,b,b^{*},a^{*}).
\]

\vspace{6pt}

{\em Step 2.} Use the formula for Boolean cumulants with free
entries.

This step takes on the Boolean cumulants which appear on the 
right-hand side of (\ref{eqn:61b}) and still mix together $a$'s 
and $b$'s -- we use Theorem \ref{thm:141} to express these 
Boolean cumulants as polynomials which separate the $a$'s 
from the $b$'s.  In order to process
$\beta_6 (a,b,a,b,b^{*},a^{*})$ we refer to Example \ref{ex:42}
and we also do the immediate calculation that
\[
\beta_4 (b,a,b,b^{*}) = \beta_1 (a) \beta_3 (b,b,b^{*}), 
\ \ \beta_3 (a,b,a) = \beta_2 (a,a) \beta_1 (b).
\]
The overall result of the calculation is that 
\begin{align}   \label{eqn:61c}
\beta_3 ( ab, ab, b^*a^* ) =
& \  \beta_1 (a) \beta_1 (a) \beta_1 (a^{*}) \cdot 
     \beta_3 (b,b,b^{*})
  + \beta_2 (a,a) \beta_1 (a^{*}) \cdot
    \beta_1 (b) \beta_2 (b,b^{*})                             \\
& + \beta_2(a,a^*)\beta_1(a) \cdot \beta_2(b,b)\beta_1(b^*)
  + \beta_2(a,a^*)\beta_1(a)\beta_3(b,b,b^*)     \nonumber    \\
&   \beta_3(a,a,a^*) \cdot \beta_1(b)\beta_1(b)\beta_1(b^*)
  + \beta_3(a,a,a^*)\beta_1(b)\beta_2(b,b^*).  \nonumber
\end{align}

The right-hand side of (\ref{eqn:61c}) is indeed a sum of 
products of $*$-cumulants of $a$ and of $b$, as we wanted.

\vspace{6pt}

{\em Step 3.} In order to understand what is going on, 
we record Equation (\ref{eqn:61c}) in the form:
\begin{equation}   \label{eqn:61d}
\beta_3 ( ab, ab, b^*a^* ) = 
\end{equation}
\[
\sum_{\pi \in \Pi} 
\prod_{ \begin{array}{c}
{\scriptstyle U \in \pi, } \\
{\scriptstyle of \ `colour' \ a} 
\end{array} } 
\ \beta_{|U|} ( (a,b,a,b,b^{*},a^{*}) \mid U) 
\cdot
\prod_{ \begin{array}{c}
{\scriptstyle V \in \pi, } \\
{\scriptstyle of \ `colour' \ b} 
\end{array} }
\ \beta_{|V|} ( (a,b,a,b,b^{*},a^{*}) \mid V),
\]
where $\Pi$ is the set of 6 non-crossing partitions depicted 
(in a way which includes colouring of blocks) in Figure 4.
The question then becomes: can we put into evidence the 
structure which underlies this special set $\Pi \subseteq NC(6)$?  
Upon staring a bit at Figure 4, it becomes quite appealing to 
believe that VRNP must be involved here!  This hunch is confirmed 
by Theorem \ref{thm:66} below, where the set $\Pi \subseteq NC(6)$ 
from Equation (\ref{eqn:61d}) becomes the correct indexing set 
corresponding to the tuple $\ee = (1,1,*) \in \{ 1,* \}^3$.
\end{example}

\vspace{6pt}

\begin{minipage}{1\linewidth}
\begin{center}
\begin{tikzpicture}			
			\draw[red,thick] (-8,0) -- (-8,0.25);
			\draw[red,thick] (-6,0) -- (-6,0.25);
			\draw[red,thick] (-3,0) -- (-3,0.25);
			\draw[blue,thick] (-7,0) to[out=90,in=90] node [sloped,above] {} (-5,0);
			\draw[blue,thick] (-5,0) to[out=90,in=90] node [sloped,above] {} (-4,0);
						
			\node[below,red] (1) at (-8,0) {$a$};
			\node[below,blue] (2) at (-7,0) {$b$};
			\node[below,red] (3) at (-6,0) {$a$};
			\node[below,blue] (4) at (-5,0) {$b$};
			\node[below,blue] (5) at (-4,0) {$b^*$};
			\node[below,red] (6) at (-3,0) {$a^*$};

			\draw[blue,thick] (1,0) -- (1,0.25);
			\draw[red,thick] (5,0) -- (5,0.25);
			\draw[red,thick] (0,0) to[out=90,in=90] node [sloped,above] {} (2,0);
			\draw[blue,thick] (3,0) to[out=90,in=90] node [sloped,above] {} (4,0);
			
			\node[below,red] (7) at (0,0) {$a$};
			\node[below,blue] (8) at (1,0) {$b$};
			\node[below,red] (9) at (2,0) {$a$};
			\node[below,blue] (10) at (3,0) {$b$};
			\node[below,blue] (11) at (4,0) {$b^*$};
			\node[below,red] (12) at (5,0) {$a^*$};
			\end{tikzpicture}
		\end{center}
	\end{minipage}
	\begin{minipage}{1\linewidth}
		\begin{center}
			\begin{tikzpicture}
				\draw[red,thick] (-6,0) -- (-6,0.25);
				\draw[blue,thick] (-4,0) -- (-4,0.25);
				\draw[red,thick] (-8,0) to[out=90,in=90] node [sloped,above] {} (-3,0);
				\draw[blue,thick] (-7,0) to[out=90,in=90] node [sloped,above] {} (-5,0);
				
				\node[below,red] (1) at (-8,0) {$a$};
				\node[below,blue] (2) at (-7,0) {$b$};
				\node[below,red] (3) at (-6,0) {$a$};
				\node[below,blue] (4) at (-5,0) {$b$};
				\node[below,blue] (5) at (-4,0) {$b^*$};
				\node[below,red] (6) at (-3,0) {$a^*$};

				\draw[red,thick] (2,0) -- (2,0.25);
				\draw[red,thick] (0,0) to[out=90,in=90] node [sloped,above] {} (5,0);
				\draw[blue,thick] (1,0) to[out=90,in=90] node [sloped,above] {} (3,0);
				\draw[blue,thick] (3,0) to[out=90,in=90] node [sloped,above] {} (4,0);

				\node[below,red] (7) at (0,0) {$a$};
				\node[below,blue] (8) at (1,0) {$b$};
				\node[below,red] (9) at (2,0) {$a$};
				\node[below,blue] (10) at (3,0) {$b$};
				\node[below,blue] (11) at (4,0) {$b^*$};
				\node[below,red] (12) at (5,0) {$a^*$};
			\end{tikzpicture}
		\end{center}
	\end{minipage}
	\begin{minipage}{1\linewidth}
	\begin{center}
		\begin{tikzpicture}
		\draw[blue,thick] (-4,0) -- (-4,0.25);
		\draw[blue,thick] (-7,0) -- (-7,0.25);
		\draw[blue,thick] (-5,0) -- (-5,0.25);
		\draw[red,thick] (-8,0) to[out=90,in=90] node [sloped,above] {} (-6,0);
		\draw[red,thick] (-6,0) to[out=90,in=90] node [sloped,above] {} (-3,0);
		
		\node[below,red] (1) at (-8,0) {$a$};
		\node[below,blue] (2) at (-7,0) {$b$};
		\node[below,red] (3) at (-6,0) {$a$};
		\node[below,blue] (4) at (-5,0) {$b$};
		\node[below,blue] (5) at (-4,0) {$b^*$};
		\node[below,red] (6) at (-3,0) {$a^*$};

		\draw[blue,thick] (1,0) -- (1,0.25);
		\draw[red,thick] (0,0) to[out=90,in=90] node [sloped,above] {} (2,0);
		\draw[red,thick] (2,0) to[out=90,in=90] node [sloped,above] {} (5,0);
		\draw[blue,thick] (3,0) to[out=90,in=90] node [sloped,above] {} (4,0);

		\node[below,red] (7) at (0,0) {$a$};
		\node[below,blue] (8) at (1,0) {$b$};
		\node[below,red] (9) at (2,0) {$a$};
		\node[below,blue] (10) at (3,0) {$b$};
		\node[below,blue] (11) at (4,0) {$b^*$};
		\node[below,red] (12) at (5,0) {$a^*$};
		\end{tikzpicture}
        \end{center}
        \end{minipage}

\begin{center}
{\bf Figure 4.}
{\em The set $\Pi \subseteq NC(6)$ used in 
Equation (\ref{eqn:61d}).}
\end{center}

$\ $

\begin{remark}   \label{rem:62}
A similar calculation to the one shown in 
Example \ref{eg:61} could be done in the world of free cumulants, 
and would lead to a similarly looking formula which expresses 
the free cumulant $\kappa_3 (ab, ab, b^{*}a^{*} )$ in terms 
of the $*$-free cumulants of $a$ and the $*$-free cumulants 
of $b$.  The formula with free cumulants is ``just a bit'' 
more complicated than the one for Boolean cumulants obtained
in (\ref{eqn:61c}) above: it has 7 terms instead of 6, where 
6 of the 7 terms are having the same structure as in 
(\ref{eqn:61c}), while the 7th term is
\[
\kappa_2(a,a^*)\kappa_1(a)\kappa_1(b)\kappa_2(b,b^*),
\]
corresponding to the partition 
$\{\{1,6\},\{2\},\{3\},\{4,5\}\} \in NC(6)$.
But, unlike the partitions corresponding to the other 6 terms,
this latter partition does not satisfy VNRP! (It is one of the 
two partitions without VNRP that were featured in Figure 3(b) 
in Section 4.)
\end{remark}

$\ $

We now move to discuss Boolean cumulants for general words made 
with $ab$ and with $(ab)^{*}$.  To this end, we recall from the 
Introduction the terminology about ac-friendly partitions and 
about the canonical alternating colouring of a non-crossing 
partition.  

In the Introduction, the definition of what is an 
ac-friendly partition in $NC(2n)$ was made by only 
referring to depths of blocks.  It will come in handy to 
note that this notion can also be approached via canonical 
alternating colourings, as explained in the next proposition.

\begin{proposition}    \label{prop:63}
Let $n$ be a positive integer and let $\pi$ be a 
partition in $NC(2n)$.  Then $\pi$ belongs to the set
$\NCacfriendly (2n)$ introduced in Definition \ref{def:151}
if and only if it satisfies the following two conditions:

\vspace{6pt}

\em{(AC-Friendly1)} $\OuterMax ( \pi ) \subseteq 
\{ 1,3, \ldots , 2n-1 \} \cup \{ 2n \}$.

\vspace{6pt}

(AC-Friendly2') $\calt_{\pi} (2i-1) \neq \calt_{\pi} (2i),
\ \ \forall \, 1 \leq i \leq n$,
where $\calt_{\pi} : \{ 1, \ldots , 2n \} \to \{ 1,2 \}$
is the canonical alternating colouring associated to $\pi$.
\end{proposition}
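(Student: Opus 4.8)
The plan is to note that (AC-Friendly1) appears verbatim in both descriptions of $\NCacfriendly(2n)$, so it suffices to prove that for a fixed $\pi\in NC(2n)$ satisfying (AC-Friendly1), conditions (AC-Friendly2) and (AC-Friendly2') are equivalent. Both are conjunctions, indexed by $i\in\{1,\ldots,n\}$, of a statement about the consecutive pair $2i-1,2i$, so I would argue index by index, splitting on whether $2i-1\in\OuterMax(\pi)$. When $2i-1\in\OuterMax(\pi)$, condition (AC-Friendly2) requires nothing for this $i$, while (AC-Friendly2') holds automatically: if $2i-1=\max(W)$ for an outer block $W$, then $2i=\min(W')$ for the next outer block $W'$ (Remark \ref{rem:25}), and (C-Alt2) gives $\calt_\pi(2i-1)=\calt_\pi(W)\neq\calt_\pi(W')=\calt_\pi(2i)$. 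So the real content is the case $2i-1\notin\OuterMax(\pi)$, where I must show $\depth_\pi(2i-1)\neq\depth_\pi(2i)\Leftrightarrow\calt_\pi(2i-1)\neq\calt_\pi(2i)$.

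The engine is a structural trichotomy for a consecutive pair $j,j+1$ with $j\notin\OuterMax(\pi)$. Writing $B_j,B_{j+1}$ for the blocks of $\pi$ containing $j$ and $j+1$, one first observes that $B_j$ and $B_{j+1}$ lie $\leqnest$-below a common outer block (otherwise $j$ would be the maximum of the outer block above it, i.e. $j\in\OuterMax(\pi)$), and then exactly one of the following holds: (i) $B_j=B_{j+1}$; (ii) one of $B_j,B_{j+1}$ is the parent-block of the other; (iii) $B_j\neq B_{j+1}$ but $\Parent_\pi(B_j)=\Parent_\pi(B_{j+1})$. To establish this I would split on whether $j=\max(B_j)$ and whether $j+1=\min(B_{j+1})$. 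If $j<\max(B_j)$, the integers strictly between $j$ and its successor inside $B_j$ form a region nested directly inside $B_j$, and a short argument (a block strictly between $B_{j+1}$ and $B_j$ would be forced to contain $j+1$) gives $\Parent_\pi(B_{j+1})=B_j$; the mirror argument handles $j=\max(B_j)$ with $j+1>\min(B_{j+1})$ and yields $\Parent_\pi(B_j)=B_{j+1}$. The remaining subcase, $j=\max(B_j)$ and $j+1=\min(B_{j+1})$ with $B_j\neq B_{j+1}$, is the hard part, producing (iii): set $P:=\Parent_\pi(B_j)$, which exists because $j\notin\OuterMax(\pi)$ forces $B_j$ to be inner; one checks that $P$ has elements on both sides of the \emph{slot} between $j$ and $j+1$, so $j+1$ lies in a gap of $P$ and hence $B_{j+1}\lnest P$; then the minimality of $P$ as a parent of $B_j$ rules out any block strictly between $B_{j+1}$ and $P$, giving $P=\Parent_\pi(B_{j+1})$. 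All of these arguments rest on the laminarity of the block-spans of a non-crossing partition, and I expect this last (sibling) subcase to be the main obstacle.

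Granting the trichotomy, the conclusion is immediate. In cases (i) and (iii) one has $\depth_\pi(j)=\depth_\pi(j+1)$ and also $\calt_\pi(j)=\calt_\pi(j+1)$ -- trivially when $B_j=B_{j+1}$, and, in the sibling case, because (C-Alt3) forces each of $B_j,B_{j+1}$ (both inner, with common parent $P$) to take the colour complementary to $\calt_\pi(P)$, hence the same colour. In case (ii) the depths of $j$ and $j+1$ differ by exactly $1$ (a block and its parent-block have depths differing by one, Remark \ref{rem:24}), and (C-Alt3) makes their colours differ. Thus for each $i$ with $2i-1\notin\OuterMax(\pi)$ the $i$-th clause of (AC-Friendly2) and the $i$-th clause of (AC-Friendly2') are logically equivalent; combined with the $2i-1\in\OuterMax(\pi)$ case treated above, taking the conjunction over all $i$ yields (AC-Friendly2) $\Leftrightarrow$ (AC-Friendly2'), which is the assertion of the proposition.
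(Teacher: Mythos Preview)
Your proof is correct and follows essentially the same approach as the paper's: both reduce to the structural trichotomy that for consecutive $j,j+1$ with $j\notin\OuterMax(\pi)$, the blocks $B_j,B_{j+1}$ are either equal, in a parent--child relation, or siblings with a common parent, and then read off the depth/colour behaviour in each case. The difference is one of thoroughness rather than strategy: the paper simply asserts ``this can only happen if\ldots'' at the two places where the trichotomy is invoked, whereas you actually argue it out (splitting on whether $j=\max(B_j)$ and $j+1=\min(B_{j+1})$). Your index-by-index equivalence framing is also slightly cleaner than the paper's two contrapositive arguments; in particular you handle the $2i-1\in\OuterMax(\pi)$ case explicitly via (C-Alt2), which the paper's ``$\Rightarrow$'' direction leaves implicit.
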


\begin{proof} ``$\Rightarrow$'' We assume that 
(AC-Friendly1) and (AC-Friendly2) hold.  Suppose, toward a contradiction, that (AC-Friendly2') fails. Then there exists an $i$ such that $\calt_\pi(2i-1)\not=\calt_\pi(2i)$.  This can only happen if $2i-1$ and $2i$ belong to distinct blocks $A$ and $B$ that are ``siblings," i.e., have same parent $C$.  But then, for $j=2i-1$, we have that $\depth_\pi(j)=\depth\pi(C)+1=\depth_\pi(j+1)$ contradicting (AC-Friendly2).

\vspace{6pt}

``$\Leftarrow$'' We assume that (AC-Friendly1) and (AC-Friendly2') hold.  
Suppose, toward a contradiction, that (AC-Friendly2) fails.  
Let $j=2i-1\not\in\OuterMax(\pi)$ be such that $\depth_\pi(j)=\depth_\pi(j+1)$.  
Since $j$ and $j+1$ cannot belong to distinct outer blocks (as then we 
would have $j\in\OuterMax(\pi)$) this  can only happen if they either 
belong to the same block or they belong to blocks that have the same parent.  
In both of these cases we then have that $\calt_\pi(2i-1)=\calt_\pi(2i)$,
contradicting (AC-Friendly2').
\end{proof}

\begin{notation}  \label{def:64}
Let $n$ be a positive integer, let $\pi$ be a partition
in $\NCacfriendly (2n)$, and consider the canonical 
alternating colouring 
$\calt_{\pi} : \{ 1, \ldots , 2n \} \to \{ 1,2 \}$.
We use the values 
$\calt_{\pi} (1), \calt_{\pi} (3), \ldots , \calt_{\pi} (2n-1)$
in order to create a tuple $\ee \in \{ 1,* \}^n$, as follows:
\begin{equation}   \label{eqn:64a}
\ee(i) = \left\{  \begin{array}{ll}
1,  &  \mbox{ if $\calt_{\pi} (2i-1) = 1$, }   \\
*,  &  \mbox{ if $\calt_{\pi} (2i-1) = 2$  }
\end{array}  \right\} \ , 1 \leq i \leq n.
\end{equation}
The tuple $\ee \in \{ 1,* \}^n$ so defined will be denoted 
as $\oddtuple ( \pi )$.
\end{notation}

\begin{remark}    \label{rem:65}
Let $n$ be a positive integer and let $\pi$ be a partition
in $\NCacfriendly (2n)$.  Knowing what is the tuple 
$\oddtuple ( \pi ) \in \{ 1,* \}^n$ gives us precisely the 
same information as if we knew what is the colouring
$\calt_{\pi} : \{ 1, \ldots , 2n \} \to \{ 1,2 \}$.  Indeed, 
if we know $\oddtuple ( \pi )$ then we can use Equation 
(\ref{eqn:64a}) to find the values 
$\calt_{\pi} (1), \calt_{\pi} (3), \ldots , \calt_{\pi} (2n-1)$, 
after which we can also find out what are 
$\calt_{\pi} (2), \calt_{\pi} (4), \ldots , \calt_{\pi} (2n)$
based on the fact that $\calt_{\pi} (2i) \neq \calt_{\pi} (2i-1)$,
$1 \leq i \leq n$.
\end{remark}

We can now state the desired formula about the joint 
Boolean cumulants of $ab$ and $ba$.

\begin{theorem}   \label{thm:66}
Let $( \cA , \varphi )$ be a $*$-probability space 
and let $( \beta_n : \cA^n \to \bC )_{n=1}^{\infty}$ be the 
family of Boolean cumulant functionals associated to it.
We consider two selfadjoint elements $a,b \in \cA$ such that 
$a$ is freely independent from $b$, and we consider the 
sequences of Boolean cumulants
$( \beta_n (a) )_{n=1}^{\infty}$ and 
$( \beta_n (b) )_{n=1}^{\infty}$.

\vspace{6pt}

(1) Let $n$ be a positive integer and let
$\ee = ( \ee (1), \ldots , \ee (n)) \in \{ 1,* \}^n$
be such that $\ee (1) = 1$.  One has
\begin{equation}   \label{eqn:66a}
\beta_n \bigl( \, (ab)^{\ee (1)}, \ldots ,
(ab)^{\ee (n)} \, \bigr) =
\end{equation}
\[
\sum_{ \begin{array}{c}
{\scriptstyle \pi \in \NCacfriendly (2n),} \\
{\scriptstyle such \ that}    \\
{\scriptstyle \oddtuple ( \pi ) = \ee}
\end{array} } 
\ \Bigl( 
\prod_{ \begin{array}{c}
{\scriptstyle U \in \pi, \ with}  \\
{\scriptstyle \calt_{\pi} (U) = 1} 
\end{array} } \
\beta_{|U|} (a) \Bigr)  \cdot
\Bigl( \prod_{ \begin{array}{c}
{\scriptstyle V \in \pi, \ with}  \\
{\scriptstyle \calt_{\pi} (V) = 2} 
\end{array} } \ \beta_{|V|} (b) \Bigr). 
\]

\vspace{6pt}

(2) Let $n$ be a positive integer and let
$\ee = ( \ee (1), \ldots , \ee (n)) \in \{ 1,* \}^n$
be such that $\ee (1) = *$.  Consider the complementary 
tuple $\ee ' \in \{ 1,* \}^n$, uniquely determined by 
the requirement that $\ee ' (i) \neq \ee (i)$, for all
$1 \leq i \leq n$.  One has
\begin{equation}   \label{eqn:66b}
\beta_n \bigl( \, (ab)^{\ee (1)}, \ldots ,
(ab)^{\ee (n)} \, \bigr) =
\end{equation}
\[
\sum_{ \begin{array}{c}
{\scriptstyle \pi \in \NCacfriendly (2n),} \\
{\scriptstyle such \ that}    \\
{\scriptstyle \oddtuple ( \pi ) = \ee '}
\end{array} } 
\ \Bigl(   \prod_{ \begin{array}{c}
{\scriptstyle U \in \pi, \ with}  \\
{\scriptstyle \calt_{\pi} (U) = 1} 
\end{array} } \
\beta_{|U|} (b) \Bigr)  \cdot
\Bigl(   \prod_{ \begin{array}{c}
{\scriptstyle V \in \pi, \ with}  \\
{\scriptstyle \calt_{\pi} (V) = 2} 
\end{array} } \ \beta_{|V|} (a) \Bigr). 
\]
\end{theorem}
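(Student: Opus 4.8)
The plan is to carry out the three-step strategy described in the Introduction. Fix $\ee \in \{1,*\}^n$ and set $x_i := (ab)^{\ee(i)}$; since $a$ and $b$ are selfadjoint, $x_i = ab$ when $\ee(i) = 1$ and $x_i = ba$ when $\ee(i) = *$. Thus $\beta_n(x_1,\dots,x_n)$ is a Boolean cumulant whose $i$-th entry is a product of two of the elements $a_1,\dots,a_{2n}\in\cA$, where $(a_{2i-1},a_{2i}) = (a,b)$ if $\ee(i) = 1$ and $(a_{2i-1},a_{2i}) = (b,a)$ if $\ee(i) = *$. Let $\cA_1$ be the unital subalgebra of $\cA$ generated by $a$ and $\cA_2$ the one generated by $b$; these are free. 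Introduce the colouring $c_{\ee}:\{1,\dots,2n\}\to\{1,2\}$ with $c_{\ee}(j) = 1$ if $a_j = a$ and $c_{\ee}(j) = 2$ if $a_j = b$. Two features of $c_{\ee}$ will be used repeatedly: $c_{\ee}(2i)\neq c_{\ee}(2i-1)$ for every $i$ (the two factors of $x_i$ are $a$ and $b$, in one order or the other), and $c_{\ee}(1) = 1$ precisely when $\ee(1) = 1$.

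Step 1 is to apply Proposition \ref{prop:210} with $i(p) = 2p$; by Remark \ref{rem:211} this writes $\beta_n(x_1,\dots,x_n)$ as a sum over those $\pi\in\Int(2n)$ which do not ``cut between $2p$ and $2p+1$'' for any $1\le p\le n-1$ --- equivalently, over those interval partitions all of whose blocks have odd maximum, except for the last block, whose maximum is $2n$. Step 2 is to expand, for each such $\pi$ and each block $V\in\pi$, the factor $\beta_{|V|}\bigl((a_1,\dots,a_{2n})\mid V\bigr)$ by Theorem \ref{thm:141} applied to the free pair $\cA_1,\cA_2$ (the expansion has a single surviving summand, namely the factor itself, when $c_{\ee}$ is already constant on $V$). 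Assembling, for each $\pi$, the chosen refinements of the various restrictions into a single $\tau\in NC(2n)$ and then letting $\pi$ vary, one arrives at
\[
\beta_n\bigl((ab)^{\ee(1)},\dots,(ab)^{\ee(n)}\bigr)
= \sum_{\tau\in\cT_{\ee}} \ \prod_{W\in\tau} \beta_{|W|}\bigl((a_1,\dots,a_{2n})\mid W\bigr),
\]
where $\cT_{\ee}$ is the set of $\tau\in NC(2n)$ such that $c_{\ee}$ is constant on every block of $\tau$, the pair $(\tau,c_{\ee})$ has VNRP, and $\OuterMax(\tau)\subseteq\{1,3,\dots,2n-1\}\cup\{2n\}$. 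To see that the index set is exactly $\cT_{\ee}$ one uses that the outer blocks of $\tau$ are in bijection with the blocks of $\overline{\tau}$ (so the Step 1 condition on $\pi = \overline{\tau}$ becomes precisely (AC-Friendly1)), that every block of $\tau$ together with its whole chain of ancestors under $\Parent_{\tau}$ lies inside one and the same block of $\overline{\tau}$ (so ``$\tau$ restricted to each block of $\overline{\tau}$ has VNRP'' is the same as the single requirement that $(\tau,c_{\ee})$ has VNRP), and that such a restriction automatically has a unique outer block spanning the corresponding block of $\overline{\tau}$.

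Step 3 is to identify $\cT_{\ee}$ and simplify the summand. The crucial point is that, for $\tau$ satisfying (AC-Friendly1), any colouring $c$ that is constant on blocks, has VNRP, and satisfies $c(2i-1)\neq c(2i)$ for all $i$ --- in particular $c_{\ee}$ --- must coincide with the canonical alternating colouring $\calt_{\tau}$ of Definition \ref{def:153} up to a global interchange of the two colours. Indeed such a $c$ satisfies (C-Alt3) since it has VNRP, and it satisfies (C-Alt2) because whenever $W,W'$ are consecutive outer blocks of $\tau$ with $\min(W') = 1+\max(W)$, condition (AC-Friendly1) forces $\max(W)$ odd and $\min(W')$ even, whence $c(W) = c(\max W)\neq c(\max W + 1) = c(W')$; so by the defining property of $\calt_{\tau}$ (the unique colouring satisfying (C-Alt1)--(C-Alt3)) we get $c = \calt_{\tau}$ when $c(1) = 1$ and $c$ equal to the colour-swap of $\calt_{\tau}$ when $c(1) = 2$. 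In either case $\calt_{\tau}(2i-1)\neq\calt_{\tau}(2i)$ for all $i$, so Proposition \ref{prop:63} yields $\tau\in\NCacfriendly(2n)$. Conversely, for $\tau\in\NCacfriendly(2n)$ the colouring $\calt_{\tau}$ is constant on blocks, has VNRP, and satisfies $\calt_{\tau}(2i-1)\neq\calt_{\tau}(2i)$ (Proposition \ref{prop:63}); since $c_{\ee}$ is itself determined on odd positions by $\ee$, on even positions by its pair-alternation, and takes the value $1$ at position $1$ iff $\ee(1)=1$, one reads off that $c_{\ee}$ coincides with $\calt_{\tau}$ exactly when $\ee(1)=1$ and $\oddtuple(\tau)=\ee$ (Notation \ref{def:64}), and with the colour-swap of $\calt_{\tau}$ exactly when $\ee(1)=*$ and $\oddtuple(\tau)=\ee'$, with $\ee'$ the complementary tuple; in either of those cases $\tau\in\cT_{\ee}$. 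Finally, a block $W$ on which $c_{\ee}$ is constantly $1$ has all entries of $(a_1,\dots,a_{2n})\mid W$ equal to $a$, so $\beta_{|W|}\bigl((a_1,\dots,a_{2n})\mid W\bigr) = \beta_{|W|}(a)$, and similarly $\beta_{|W|}(b)$ when $c_{\ee}$ is constantly $2$ on $W$. Substituting $c_{\ee} = \calt_{\tau}$ in the case $\ee(1) = 1$ turns the display above into (\ref{eqn:66a}), and substituting the colour-swap of $\calt_{\tau}$ in the case $\ee(1) = *$ interchanges the roles of $a$ and $b$ and turns it into (\ref{eqn:66b}).

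I expect the main obstacle to be Step 3, specifically the verification that the freedom one a priori has in colouring the outer blocks of $\tau$ under a mere VNRP colouring is entirely removed once one combines (AC-Friendly1) with the pair-alternation $c_{\ee}(2i-1)\neq c_{\ee}(2i)$ --- this is exactly what pins $c_{\ee}$ down to $\calt_{\tau}$ or its colour-swap and forces $\tau$ to be ac-friendly. A secondary point demanding care is the bookkeeping in the passage from Steps 1--2 to the formula over $\cT_{\ee}$: one must confirm that ``$\overline{\tau}$ makes no cut between $2p$ and $2p+1$'' together with ``each restriction $\tau|_{V}$ has a unique outer block and VNRP'' amount to exactly (AC-Friendly1) plus the global VNRP of $(\tau,c_{\ee})$.
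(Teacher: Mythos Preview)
Your proof is correct and follows essentially the same three-step strategy as the paper: expand via Proposition~\ref{prop:210}, apply Theorem~\ref{thm:141} blockwise, and then identify the resulting index set with the ac-friendly partitions carrying the prescribed oddtuple. The only cosmetic difference is in Step~3: the paper verifies condition (AC-Friendly2) directly by a depth argument, whereas you instead establish (AC-Friendly2') by showing $c_{\ee}$ agrees with $\calt_{\tau}$ (or its colour-swap) and then invoke Proposition~\ref{prop:63}; the two routes are equivalent and of comparable length.
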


\begin{proof}  We will assume the case (1), i.e., that $\varepsilon(1)=1$ (the case $\varepsilon(1)=*$ is the `mirror' image of this case and is left to the reader).  Note that
$\varepsilon$ induces a colouring $c\colon\{1,\ldots,2n\}\to\{1,2\}$ by $c(2i-1)=1, c(2i)=2$ when $\varepsilon(i)=1$ and
$c(2i-1)=2$, $c(2i)=1$ when $\varepsilon(i)=*$; in other words, $c$ assigns $1$ to positions where there is an $a$ in $(ab)^{\varepsilon(1)}\ldots(ab)^{\varepsilon(n)}$ and $2$ to positions where there is a $b$.

We do again the steps presented 
in Example \ref{eg:61}, where the discussion is now 
made to go in reference to an abstract tuple $\ee$,
rather than the special case of 
$(1,1,*) \in \{ 1,* \}^3$.  

Step 1 takes us to a sum over interval partitions 
partitions $\sigma = \{ J_1, \ldots , J_p \} \in \Int (2n)$
such that $\sigma\vee\{\{1\},\{2,3\},\ldots, \{2n-2,2n-1\},\{2n\}\}=1_{2n}$.  The condition 
$\sigma\vee\{\{1\},\{2,3\},\ldots, \{2n-2,2n-1\},\{2n\}\}=1_{2n}$ is equivalent to saying that
$\max (J_k) \in \{ 1,3, \ldots , 2n-1 \}$ for all
$1 \leq k < p$.  The latter  can be expressed 
directly in terms of the cardinalities of the blocks of 
$\sigma$: either $\sigma = 1_{2n}$, or it has 
$|J_1|, |J_p|$ odd and $|J_k|$ even for all $1 < k < p$.  This implies, among other things, that for all $k<p$ we have that
$c(\max(J_k))\not=c(\min(J_{k+1}))$.

The observation in Remark \ref{rem:52} that certain mixed Boolean 
cumulants with free entries have to vanish, then yields that 
we are left to only consider the 
cases where for each $k$ we have that $c(\min(J_k))=c(\max(J_k))$.  

Step 2: We now apply the separation formula from 
Theorem \ref{thm:141} to each block $J_k$ to get that $\beta_n \bigl( \, (ab)^{\ee (1)}, \ldots ,
(ab)^{\ee (n)} \, \bigr)$ is equal to
\[
\sum_{\begin{array}{c} \scriptstyle{\sigma=\{J_1,\ldots,J_p\}\in \Int(2n)}\\
 \scriptstyle{ \sigma\vee \{\{1,2\},\ldots,\{2n-1,2n\}\}=1_{2n}}\\
 \scriptstyle{\forall k, c(\min(J_k))=c(\max(J_k))} \end{array}} 
\sum_{ \begin{array}{c}
\scriptstyle{\pi \in NC(2n,c),} \\
\scriptstyle{\pi\ll \sigma} \\
\scriptstyle{\pi\mbox{ has VNRP }}
\end{array} } 
\ \Bigl( 
\prod_{ \begin{array}{c}
{\scriptstyle U \in \pi, \ with}  \\
{\scriptstyle c(U) = 1} 
\end{array} } \
\beta_{|U|} (a) \Bigr)  \cdot
\Bigl( \prod_{ \begin{array}{c}
{\scriptstyle V \in \pi, \ with}  \\
{\scriptstyle c(V) = 2} 
\end{array} } \ \beta_{|V|} (b) \Bigr). 
\]
Step 3: We claim that partitions $\pi$ appearing in the summation above are precisely those 
in $\NCacfriendly (2n)$ for which $\oddtuple ( \pi ) = \ee$ and that we always have
$\calt_\pi = c$; from whence the formula (\ref{eqn:66a}) follows.  

Since consecutive blocks of $\sigma$ start with distinct colours we have that the consecutive outer blocks of $\pi$ have distinct colours.  When we combine this observation with the facts that the first block of $\pi$ has colour $1$ and that $\pi$ has VNRP (with respect to $c$) we get that $c=\calt_\pi$.

As already mentioned above we have that the condition $\pi\ll\sigma$, $\sigma\in\Int(2n)$, 
$\sigma\vee \{\{1,2\},\ldots,\{2n-1,2n\}\}=1_{2n}$ is equivalent to $\pi$ satisfying (AC-Friendly1). 

It now remains to prove that every $\pi$ in the above summation formula satisfies 
(AC-Friendly2).  Suppose the converse.  Then there is an odd number 
$j\not\in\OuterMax(\pi)$ such that $\depth_\pi(j)=\depth_\pi(j+1)$.  
Since $j$ and $j+1$ have distinct $c$-colours we have 
that $j$ and $j+1$ belong to distinct blocks.  They cannot both belong to outer 
blocks (in this case it would follow that $j\in\OuterMax(\pi)$).  The equality 
$\depth_\pi(j)=\depth_\pi(j+1)$ then implies that the blocks containing $j$ and $j+1$ 
must have the same parent, and considering the $c$-colour of the parent-block leads to
an immediate contradiction with VNRP.
\end{proof}

\begin{remark}   \label{rem:67}
We note that Theorem \ref{thm:66} could be stated in 
the framework where $( \cA , \varphi )$ is a plain
noncommutative probability space, and we look at joint 
Boolean cumulants of $ab$ and $ba$, with $a$ free from $b$.

The statement of Theorem \ref{thm:66} 
could also be extended to the case when we deal with joint Boolean 
cumulants of $ab$ and $(ab)^{*}$ in a $*$-probability space, 
without assuming that $a$ and $b$ are selfadjoint.  The proof 
would be the same, only that it would result in stuffier formulas.
\end{remark}

\begin{remark}   \label{rem:58}
The special case $\ee = (1,1, \ldots , 1)$ of Theorem 
\ref{thm:66} gives a formula for the Boolean cumulant 
$\beta_n (ab,ab, \ldots , ab)$.  We explain here that 
this is precisely the formula from \cite{BeNi2008} which was
reviewed in Equation (\ref{eqn:12a}) of the Introduction.

To this end, let us first note that if a partition 
$\sigma \in \NCacfriendly (2n)$ has 
$\oddtuple ( \sigma ) = (1,1, \ldots , 1)$, then the 
canonical alternating colouring $\calt_{\sigma}$ must have 
$\calt_{\sigma} (2i-1) = 1$ and $\calt_{\sigma} (2i) = 2$ 
for all $1 \leq i \leq n$.  So $\calt_{\sigma}$ is
the colouring of $\{ 1, \ldots , 2n \}$ by parity, which 
implies that every block of $\sigma$ is contained either
in $\{ 1,3, \ldots , 2n-1 \}$ or in 
$\{ 2,4, \ldots , 2n \}$.  We note moreover that such 
$\sigma$ is sure to only have two outer blocks, the blocks 
$W'$ and $W''$ which contain the numbers $1$ and $2n$, respectively.
Indeed, if $\sigma$ had an outer block $W \neq W', W''$, then the 
condition (AC-Friendly1) satisfied by $\sigma$ would imply that 
$\min (W)$ is an even number and $\max (W)$ is an odd number 
-- not possible!  Knowing these things about $\sigma$, plus the 
fact that $\sigma$ has VNRP, leads to the conclusion that $\sigma$
must be of the form 
$\sigma = \piodd \sqcup ( \Kr_n ( \pi ) )^{\mathrm{(even)}}$
for some $\pi \in NC(n)$; this is precisely the content of 
Lemma 6.8 in \cite{BeNi2008}.

Conversely, let $\pi$ be in $NC(n)$ and consider the partition 
$\sigma = \piodd \sqcup ( \Kr_n ( \pi ) )^{\mathrm{(even)}}
\in NC(2n)$.  Then Lemma 6.6 of \cite{BeNi2008} assures us that
the canonical alternating colouring of $\sigma$ is the 
colouring of $\{ 1, \ldots , 2n \}$ by parity.  The same lemma of
\cite{BeNi2008} also records the fact that $\sigma$ has exactly two
outer blocks, the ones containing the numbers $1$ and $2n$, and
this clearly entails the condition (AC-Friendly1) from
Proposition \ref{prop:63} above.  In view of Proposition 
\ref{prop:63}, we then conclude that $\sigma \in \NCacfriendly (2n)$ 
and at the same time we see that the tuple $\oddtuple ( \sigma )$ 
is equal to $(1,1, \ldots , 1)$.

The discussion from the preceding two paragraphs shows that 
when we make $\ee = (1,1, \ldots , 1)$ in Theorem \ref{thm:66}(1),
the summation on the right-hand side of Equation (\ref{eqn:66a})
is made over partitions of the form 
$\piodd \sqcup ( \Kr_n ( \pi ) )^{\mathrm{(even)}}$,
with $\pi$ running in $NC(n)$.  Moreover, when looking at the 
term of the summation which is indexed by a $\pi \in NC(n)$, one 
sees that the two products appearing there are precisely 
$\prod_{U \in \pi} \beta_{|U|} (a)$ and 
$\prod_{V \in \Kr_n ( \pi )} \beta_{|V|} (b)$.  Hence this 
special case of Equation (\ref{eqn:66a}) retrieves Equation 
(\ref{eqn:12a}), as claimed at the beginning of the remark.
\end{remark}

$\ $

By starting from Theorem \ref{thm:66}, it is easy to prove
the formula announced in Theorem \ref{thm:154} of the 
Introduction (and repeated below), concerning 
the Boolean cumulants of a free anticommutator.

\begin{theorem}    \label{thm:610}
Let $( \cA , \varphi )$ be a 
noncommutative probability space and let $a,b \in \cA$
be such that $a$ is freely independent from $b$.
Consider the sequences of Boolean cumulants 
$( \beta_n (a) )_{n=1}^{\infty}$ and 
$( \beta_n (b) )_{n=1}^{\infty}$ for $a$ and for $b$,
respectively.  Then, for every $n \geq 1$, the $n$-th 
Boolean cumulant of $ab + ba$ is 
\[
\beta_n (ab+ba) = 
\sum_{\pi \in \NCacfriendly (2n)} \ 
\Bigl( \prod_{ U \in \pi, \calt_{\pi} (U) = 1} 
\beta_{|U|} (a) \Bigr)  \cdot
\Bigl( \prod_{ V \in \pi, \calt_{\pi} (V) = 2} 
\beta_{|V|} (b) \Bigr)  
\]
\[ 
+ \ \sum_{\pi \in \NCacfriendly (2n)} \ 
\Bigl( \prod_{ U \in \pi, \calt_{\pi} (U) = 1} 
\beta_{|U|} (b) \Bigr)  \cdot
\Bigl( \prod_{ V \in \pi, \calt_{\pi} (V) = 2} 
\beta_{|V|} (a) \Bigr) . 
\]
\end{theorem}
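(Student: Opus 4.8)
The plan is to obtain Theorem \ref{thm:610} as a quick corollary of Theorem \ref{thm:66}: all the combinatorial substance has already been invested in the latter, and (as observed in Remark \ref{rem:67}) Theorem \ref{thm:66} is available verbatim in the present purely algebraic setting if one simply reads ``$ba$'' wherever it said ``$(ab)^{*}$''. So the remaining work is pure bookkeeping.

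First I would expand the left-hand side by multilinearity of $\beta_n$:
\[
\beta_n (ab+ba, \ldots , ab+ba) =
\sum_{ \ee \in \{ 1, * \}^n }
\beta_n \bigl( \, (ab)^{\ee (1)}, \ldots , (ab)^{\ee (n)} \, \bigr),
\]
where for $\ee \in \{ 1,* \}^n$ the symbol $(ab)^{\ee(i)}$ is read as $ab$ if $\ee(i) = 1$ and as $ba$ if $\ee(i) = *$. I would then split the $2^n$ tuples $\ee$ into the $2^{n-1}$ having $\ee(1) = 1$, to which Theorem \ref{thm:66}(1) applies, and the $2^{n-1}$ having $\ee(1) = *$, to which Theorem \ref{thm:66}(2) applies.

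The one fact that makes the bookkeeping work is that $\oddtuple ( \sigma )(1) = 1$ for \emph{every} $\sigma \in \NCacfriendly (2n)$: indeed (C-Alt1) forces $\calt_{\sigma} (1) = 1$, which by Notation \ref{def:64} is precisely the statement $\oddtuple(\sigma)(1) = 1$. Hence, as $\ee$ ranges over all tuples with $\ee(1) = 1$, the fibres $\{ \sigma \in \NCacfriendly (2n) \mid \oddtuple(\sigma) = \ee \}$ form a partition of the whole set $\NCacfriendly (2n)$. Summing the formula of Theorem \ref{thm:66}(1) over this group of tuples therefore collapses to a single sum over all of $\NCacfriendly (2n)$, which is exactly the first term on the right-hand side of Theorem \ref{thm:610}. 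For the tuples with $\ee(1) = *$, the assignment $\ee \mapsto \ee'$ (complementary tuple) is a bijection onto the tuples with first entry $1$; applying Theorem \ref{thm:66}(2) and then re-indexing by $\ee'$, the same fibre argument collapses that sum to $\sum_{\sigma \in \NCacfriendly(2n)} ( \prod_{U \in \sigma, \, \calt_{\sigma}(U) = 1} \beta_{|U|}(b) ) \cdot ( \prod_{V \in \sigma, \, \calt_{\sigma}(V) = 2} \beta_{|V|}(a) )$, i.e. the second term on the right-hand side of Theorem \ref{thm:610}. Adding the two contributions gives the claimed identity.

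There is no serious obstacle here: once Theorem \ref{thm:66} is in hand the argument is purely a reindexing of summations. The only point needing a moment's attention is the observation that $\oddtuple$ always has $1$ in its first coordinate, which is what guarantees that summing over tuples starting with $1$ sweeps out every ac-friendly partition exactly once, with none omitted and none double-counted; after that the proof goes through mechanically.
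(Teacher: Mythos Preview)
Your proposal is correct and follows essentially the same route as the paper's own proof: expand by multilinearity, split according to whether $\ee(1)=1$ or $\ee(1)=*$, invoke the two parts of Theorem \ref{thm:66}, and collapse each sum using the observation that $\oddtuple(\sigma)(1)=1$ for every $\sigma\in\NCacfriendly(2n)$. Your write-up is in fact slightly more explicit than the paper's, since you spell out \emph{why} that first coordinate is always $1$ (via (C-Alt1)) and you justify the passage to the plain noncommutative setting via Remark \ref{rem:67}.
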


\begin{proof}  We write that 
\begin{equation}   \label{eqn:610a}
\beta_n ( ab+ba , \ldots , ab+ba )
= \sum_{\ee \in \{ 1,* \}^n } \,
\beta_n \bigl( \, (ab)^{\ee (1)}, \ldots , 
                  (ab)^{\ee (n)} \, \bigr) = 
\end{equation}
\[
\sum_{\ee \in \{ 1,* \}^n, \ee(1)=1 } \beta_n \bigl( \, (ab)^{\ee (1)}, \ldots , 
                  (ab)^{\ee (n)} \, \bigr) + \sum_{\ee \in \{ 1,* \}^n, \ee(1)=* } \beta_n \bigl( \, (ab)^{\ee (1)}, \ldots , 
                  (ab)^{\ee (n)} \, \bigr). 
\]
Note that every $\pi\in\NCacfriendly (2n)$ determines a unique $\ee:=\oddtuple(\pi)\in\{1,*\}^n$ such that $\ee(1)=1$.  Also recall that every $\ee\in \{1,*\}^n$ for which $\ee(1)=*$ we have $\ee'\in\{1,*\}^n$ determined by $\ee'(k)\not=\ee(k)$ for all $k$ (in particular $\ee'(1)=1$). We invoke the formulas found in Theorem 
\ref{thm:66} to get
\begin{align*}
&\sum_{\ee \in \{ 1,* \}^n, \ee(1)=1 } \beta_n \bigl( \, (ab)^{\ee (1)}, \ldots , 
                  (ab)^{\ee (n)} \, \bigr)\\ 
&= \sum_{\ee \in \{ 1,* \}^n, \ee(1)=1 }\sum_{ \begin{array}{c}
{\scriptstyle \pi \in \NCacfriendly (2n),} \\
{\scriptstyle such \ that}    \\
{\scriptstyle \oddtuple ( \pi ) = \ee}
\end{array} } 
\ \Bigl( 
\prod_{ \begin{array}{c}
{\scriptstyle U \in \pi, \ with}  \\
{\scriptstyle \calt_{\pi} (U) = 1} 
\end{array} } \
\beta_{|U|} (a) \Bigr)  \cdot
\Bigl( \prod_{ \begin{array}{c}
{\scriptstyle V \in \pi, \ with}  \\
{\scriptstyle \calt_{\pi} (V) = 2} 
\end{array} } \ \beta_{|V|} (b) \Bigr) \\
&= \sum_{
{\scriptstyle \pi \in \NCacfriendly (2n)} } 
\ \Bigl( 
\prod_{ \begin{array}{c}
{\scriptstyle U \in \pi, \ with}  \\
{\scriptstyle \calt_{\pi} (U) = 1} 
\end{array} } \
\beta_{|U|} (a) \Bigr)  \cdot
\Bigl( \prod_{ \begin{array}{c}
{\scriptstyle V \in \pi, \ with}  \\
{\scriptstyle \calt_{\pi} (V) = 2} 
\end{array} } \ \beta_{|V|} (b) \Bigr),
\end{align*}
and 
\begin{align*}
&\sum_{\ee \in \{ 1,* \}^n, \ee(1)=* } \beta_n \bigl( \, (ab)^{\ee (1)}, \ldots , 
                  (ab)^{\ee (n)} \, \bigr)\\ 
&= \sum_{\ee \in \{ 1,* \}^n, \ee(1)=* }\sum_{ \begin{array}{c}
{\scriptstyle \pi \in \NCacfriendly (2n),} \\
{\scriptstyle such \ that}    \\
{\scriptstyle \oddtuple ( \pi ) = \ee'}
\end{array} } 
\ \Bigl( 
\prod_{ \begin{array}{c}
{\scriptstyle U \in \pi, \ with}  \\
{\scriptstyle \calt_{\pi} (U) = 1} 
\end{array} } \
\beta_{|U|} (b) \Bigr)  \cdot
\Bigl( \prod_{ \begin{array}{c}
{\scriptstyle V \in \pi, \ with}  \\
{\scriptstyle \calt_{\pi} (V) = 2} 
\end{array} } \ \beta_{|V|} (a) \Bigr)\\
&= \sum_{
{\scriptstyle \pi \in \NCacfriendly (2n)} } 
\ \Bigl( 
\prod_{ \begin{array}{c}
{\scriptstyle U \in \pi, \ with}  \\
{\scriptstyle \calt_{\pi} (U) = 1} 
\end{array} } \
\beta_{|U|} (b) \Bigr)  \cdot
\Bigl( \prod_{ \begin{array}{c}
{\scriptstyle V \in \pi, \ with}  \\
{\scriptstyle \calt_{\pi} (V) = 2} 
\end{array} } \ \beta_{|V|} (a) \Bigr).
\end{align*}
\end{proof}

$\ $

\section{On the $\eta$--series of a free anticommutator}

In this section we show how observations about Boolean cumulants of a
free anticommuator from the previous section can be captured in the 
form of a system of equation at the level of $\eta$-series.

\subsection{Equations in power series.}

$\ $

\noindent
Throughout this subsection we fix a $*$-probability space 
$( \cA , \varphi )$ and two elements $a,b \in \cA$. 
For clarity of arguments it is better if at first we do not 
assume that $a$ and $b$ are selfadjoint, and we discuss the formal 
power series 
$\eta_{ab,b^*a^*}\in\mathbb{C}\langle\langle 
z_a,z_{a^*},z_b,z_{b^*}\rangle\rangle$ 
defined as
\begin{equation}  \label{eqn:6a}
\eta_{ab,b^*a^*} =
\sum_{n=1}^{\infty} \sum_{ (\ee (1), \ldots , \ee (n)) \in \{1,* \}^n} 
\ \beta_n ( (ab)^{\ee (1)}, \ldots , (ab)^{\ee (n)}) 
(z_a z_b)^{\ee (1)} \cdots  (z_a z_b)^{\ee (n)},
\end{equation}
where on the right-hand side of (\ref{eqn:6a}) we make the convention
to put $(z_a z_b)^{*} = z_{b^{*}} z_{a^{*}}$ (so, for instance, the 
term corresponding to $\ee = (1,1,*) \in \{ 1,* \}^3$ is 
$\beta_3 (ab, ab, b^{*}a^{*}) z_a z_b z_a z_b z_{b^{*}} z_{a^{*}}$).
At some point down the line we will however switch to the special case 
when $a=a^*,b=b^*$ and $z_a= z_{{a}^*} = z_b = z_{{b}^*} =: z$;
in this special case the series from Equation (\ref{eqn:6a}) becomes 
a series of one variable, which is nothing but $\eta_{ab+ba}(z^2)$.

Returning to Equation (\ref{eqn:6a}) we observe that 
$\eta_{ab,b^*a^*}$ splits naturally as a sum,
\begin{equation}   \label{eqn:6c}
\eta_{ab,b^*a^*} = \eta^{1,1} + \eta^{1,*}
+ \eta^{*,1} + \eta^{*,*},
\end{equation}
where $\eta^{1,1}$ contains those terms of $\eta_{ab,b^*a^*}$ which 
correspond to Boolean cumulants beginning and ending with $ab$, 
$\eta^{1,*}$ contains those terms of $\eta_{ab,b^*a^*}$ which correspond 
to Boolean cumulants that begin with $ab$ and end with $b^*a^*$, etc.
Under the assumption that $\{ a,a^{*} \}$ is free from $\{ b, b^{*} \}$,
one can then make a number of structural observations about the four power 
series introduced in Equation (\ref{eqn:6c}).

Let us start with $\eta^{1,1}$.  Every term of this series is 
of the form 
\begin{equation}    \label{eqn:6d}
\beta_n(ab,\ldots,ab)z_a z_b\dotsm z_a z_b . 
\end{equation}
To the Boolean cumulant appearing in (\ref{eqn:6d}) we apply the 
formula for Boolean cumulants with products as entries from Proposition 
\ref{prop:210}, together with the property that a Boolean cumulant 
which starts with $a$ or $a^*$ and ends with $b$ or $b^*$ (or vice versa) 
vanishes, as noticed in Remark \ref{rem:52}.
Then it follows from Theorem \ref{thm:610} 
that the cumulant from (\ref{eqn:6d}) is a sum of terms of the form
\begin{align}\label{eqn:92}
\beta_{l_0}(a,b,\ldots,a)\left(\prod_{j=1}^{n-1}
\beta_{k_{j}}(b,\ldots,b^*)\beta_{l_{j}}(a^*,\ldots,a)\right)
\beta_{k_{n}}(b,\ldots,a,b),
\end{align}
for $n\geq 1$, $l_0,k_n\geq 1$ and $l_j,k_j\geq 2$ for $j=1,\ldots,n-1$.

For $l,l^\prime\in\{a,a^*\}$ or $l,l^\prime\in\{b,b^*\}$ we define power series $f_{l,l^*}\in \mathbb{C}\langle\langle z_a,z_{a^*},z_b,z_{b^*}\rangle\rangle$ as power series which contain Boolean cumulants starting with $l$ and ending with $l^\prime$ which can appear in the expression above. 
To be more precise, consider $\eta_{a,a^*,b,b^*}\in\mathbb{C}\langle\langle z_a,z_{a^*},z_b,z_{b^*}\rangle\rangle$ the joint $\eta$--series of $a,a^*,b,b^*$ then $f_{l,l^\prime}$ is a restriction of $\eta_{a,a^*,b,b^*}$ to these terms which begin with $l$, end with $l^\prime$, and corresponding word $z_{l}\dotsm z_{l^\prime}$ is a subword of some word of the type $(z_az_b)^{l_1}(z_{b^*}z_{a^*})^{k_1}\dotsm(z_az_b)^{l_n}(z_{b^*}z_{a^*})^{k_n}$ with $n\geq 1$ and $l_1,k_n\geq 0$ and $k_1,\ldots,k_{n-1},l_2,\ldots,l_n\geq 1$. 
We have for example
\begin{equation}   \label{eqn:6e}
\left\{   \begin{array}{lll}
f_{a,a} &=& \beta_1(a)z_a+\beta_3(a,b,a)z_az_bz_{a^*}
           +\beta_5(a,b,b^*,a^*,a)z_az_bz_{b^*}z_{a^*}z_a+\ldots \\
f_{a^*,a} &=& \beta_2(a^*,a)z_{a^*} z_a+
\beta_4(a^*,a,b,a)z_{a^*}z_az_bz_a+\ldots
\end{array}  \right.
\end{equation}
With such notation, Equation \eqref{eqn:92} can be written as
\begin{align*}
\eta^{1,1}=f_{a,a}(1-f_{b,b^*} f_{a^*,a})^{-1} f_{b,b},
\end{align*}
with the convention
\begin{align*}
(1-f_{b,b^*} f_{a^*,a})^{-1}=\sum_{n=0}^{\infty}\left(f_{b,b^*} f_{a^*,a}\right)^{n}.
\end{align*}

Similar analysis to the one done above for $\eta^{1,1}$ can be done 
for the remaining three power series from the right-hand side of
(\ref{eqn:6c}).  We have for example
\begin{align*}
\eta^{1,*}=f_{a,a^*}+f_{a,a} (1-f_{b,b^*}f_{a^*,a})^{-1} f_{b,b^*} f_{a^*,a^*},
\end{align*}
where the additional term $f_{a,a^*}$ comes from the term of the 
expansion of $\beta_n (ab,\ldots,b^*a^*)$ with one outer block.
 
The system of equations which comes out of the preceding discussion can be nicely 
written in matrix form, as follows:
\begin{align}  \label{eqeta}
\begin{bmatrix}
\eta^{1,1} & \eta^{1,*} \\ 
\eta^{*,1} & \eta^{*,*}
\end{bmatrix}=\begin{bmatrix}
f_{aa} (1-f_{bb^*}f_{a^*a})^{-1}f_{bb} & f_{aa^*}+f_{aa} (1-f_{bb^*}f_{a^*a})^{-1} f_{bb^*} f_{a^*a^*}\\
f_{b^*b}+f_{b^*b^*} (1-f_{a^*a} f_{bb^*})^{-1} f_{a^*a} f_{bb} & f_{b^*b^*} (1-f_{a^*a} f_{bb^*})^{-1} f_{a^*a^*}
\end{bmatrix}.
\end{align}

Once that Equation (\ref{eqeta}) is put into evidence, the problem of computing 
the $\eta$-series $\eta_{ab, b^{*}a^{*}}$ from (\ref{eqn:6a}) is reduced 
to the one of computing the power series $f_{l,l^\prime}$.  
We will take on this job in the special case when
$a$ and $b$ are assumed to be (freely independent and) selfadjoint. 
In this special case, 
the determination of the series $f_{l, l^\prime}$ has to be made 
in terms of the Boolean cumulants of $a$ and of $b$, or equivalently,
in terms of the $\eta$-series of these elements.  The mechanism for doing
so is provided by the following theorem.

\begin{theorem}\label{thm:61}
Notation as above, where we assume that $a,b$ are selfadjoint and freely 
independent, and we put $z_a = z_{a^{*}} = z_b = z_{b^{*}} =: z$.  Define
\begin{align*}
	F_a=\begin{bmatrix}
		f_{a,a} & f_{a,a^*} \\
		f_{a^*,a} & f_{a^*,a^*}
	\end{bmatrix},\quad
	F_b=\begin{bmatrix}
	f_{b,b} & f_{b,b^*} \\
	f_{b^*,b} & f_{b^*,b^*}
	\end{bmatrix}
	\end{align*}
	and
	\begin{align*}
		H_a&=\begin{bmatrix}
		f_{bb} (1-f_{b^*b})^{-1} & f_{b,b^*}+f_{b,b} (1-f_{b^*,b})^{-1} f_{b^*,b^*}\\ (1-f_{b^*,b})^{-1} & (1-f_{b^*,b})^{-1} f_{b^*,b^*} 
		\end{bmatrix}, \\
		H_b&=\begin{bmatrix}
		 (1-f_{a,a^*})^{-1}f_{a,a} & (1-f_{a,a^*})^{-1} \\f_{a^*,a}+f_{a^*,a^*} (1-f_{a,a^*})^{-1} f_{a,a} & f_{a^*,a^*}(1-f_{a,a^*})^{-1}  
		\end{bmatrix}.
	\end{align*}
Then one has
	\begin{align}\label{eqn:acPowerSeries}
	\begin{cases}
		F_aH_a&=\eta_a(z H_a),\\
		F_bH_b&=\eta_b(z H_b).
	\end{cases}
	\end{align}
where $\eta_a$ and $\eta_b$ are the $\eta$-series of $a$ and of $b$
(as reviewed in Notation \ref{def:27b}). 
\end{theorem}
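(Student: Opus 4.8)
The plan is to prove the first equation $F_aH_a=\eta_a(zH_a)$; the second, $F_bH_b=\eta_b(zH_b)$, then follows by the mirror-image argument, the different shape of $H_b$ versus $H_a$ being merely an artifact of the letter order within the syllables ($a$-type before $b$-type in $ab$, the reverse in $(ab)^{*}=b^{*}a^{*}$). First I would rewrite the right-hand side: since $\eta_a(z)=\sum_{k\ge 1}\beta_k(a)z^k$, substituting the matrix $zH_a$ gives $\eta_a(zH_a)=\sum_{k\ge 1}\beta_k(a)\,z^k\,H_a^{\,k}$, so it suffices to prove
\[
F_a=\sum_{k\ge 1}\beta_k(a)\,z^k\,H_a^{\,k-1};
\]
this reduction is legitimate even though $H_a$ is not invertible over $\bC[[z]]$ (its degree-zero part is nilpotent) because we only ever multiply by $H_a$, never divide, and the displayed series lies in $M_2(\bC[[z]])$ since the factor $z^k$ pushes the $k$-th summand into degrees $\ge k$. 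Expanding $H_a^{\,k-1}$ as a sum over length-$(k-1)$ walks on the index set $\{1,2\}$, the $(s,s')$-entry of the right side becomes the sum, over $k\ge1$ and over $s=s_1,s_2,\ldots,s_k=s'$, of $\beta_k(a)\,z^k\prod_{m=1}^{k-1}(H_a)_{s_m,s_{m+1}}$.

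Next I would read the left side combinatorially, entry by entry. Here index $1$ codes the letter $a$ and index $2$ the letter $a^{*}$, so that $(F_a)_{1,1}=f_{a,a}$, $(F_a)_{1,2}=f_{a,a^{*}}$, $(F_a)_{2,1}=f_{a^{*},a}$, $(F_a)_{2,2}=f_{a^{*},a^{*}}$, and by definition $(F_a)_{s,s'}$ is the generating series of the joint Boolean cumulants $\beta_{|w|}(w)$ as $w$ ranges over words in $a,a^{*},b,b^{*}$ that start with the letter coded by $s$, end with the one coded by $s'$, and occur as a contiguous subword of some $(z_az_b)^{l_1}(z_{b^{*}}z_{a^{*}})^{k_1}\cdots(z_az_b)^{l_n}(z_{b^{*}}z_{a^{*}})^{k_n}$. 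Since $\{a,a^{*}\}$ is free from $\{b,b^{*}\}$, I would expand each $\beta_{|w|}(w)$ by Theorem \ref{thm:141} as a sum over partitions in $NC(|w|;c)$ (with $c$ the $a/b$-colouring of the positions of $w$) that have a unique outer block and VNRP. As $w$ starts and ends with an $a$-coloured letter, the unique outer block $U_0$ is $a$-coloured, and because $a=a^{*}$ it contributes the factor $\beta_{|U_0|}(a)$. Listing the positions of $U_0$ as $1=p_1<p_2<\cdots<p_k=|w|$ and peeling $U_0$ off, the rest of the partition is a disjoint union of non-crossing partitions filling the $k-1$ open gaps $(p_m,p_{m+1})$, each gap carrying a partition all of whose top blocks are $b$-coloured (VNRP forces the children of $U_0$ to have the other colour).

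The key step is to identify the generating series of the content of one gap with an entry of $H_a$. Attach to each leg $p_m$ the state $1$ or $2$ according to whether the underlying letter is $a$ or $a^{*}$; then $p_1$ has state $s$, $p_k$ has state $s'$, and I claim the generating series of the content of a gap running from a leg in state $\sigma$ to a leg in state $\sigma'$ equals $(H_a)_{\sigma,\sigma'}$. This is where the real work lies: the $b$-coloured top blocks of a gap, each in turn expanded by Theorem \ref{thm:141} applied to a $b$-coloured cumulant, contribute $f_{b,\cdot}$-pieces; the geometric series $(1-f_{b^{*},b})^{-1}$ occurring throughout $H_a$ records the freedom to concatenate several such pieces (the junctions being precisely the ``turning points'' of the $1/*$-syllable pattern, where two $b$-type letters are forced adjacent); and the remaining factors $f_{b,b}$, $f_{b,b^{*}}$, $f_{b^{*},b^{*}}$ and the bare constant term $1$ of $(H_a)_{21}$ account for the four admissible (state-in, state-out) pairs — the constant $1$ corresponding to the empty gap, which can occur only between an $a^{*}$-leg and an adjacent $a$-leg, i.e.\ at a $*\!\to\!1$ turn, hence only in position $(2,1)$. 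Granting this, peeling $U_0$ off is a weight-preserving bijection between the partitions contributing to $(F_a)_{s,s'}$ and the data ``an $a$-coloured block of size $k$ with legs in states $s=s_1,\ldots,s_k=s'$, together with a gap content of type $(s_m,s_{m+1})$ for each $m<k$''; summing weights gives $(F_a)_{s,s'}=\sum_{k\ge1}\beta_k(a)z^k(H_a^{\,k-1})_{s,s'}$, and multiplying by $H_a$ on the right yields $F_aH_a=\eta_a(zH_a)$.

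I expect the gap-identification step to be the main obstacle. One must match, case by case, the admissibility condition ``$w$ is a contiguous subword of some $(z_az_b)^{l_1}(z_{b^{*}}z_{a^{*}})^{k_1}\cdots$'' against the precise four entries of $H_a$, tracking the parities of positions, how the $1/*$-syllable pattern and its turning points interleave with the non-crossing block structure, and exactly where the constant term of $(H_a)_{21}$ enters. Once this dictionary is pinned down, the rest is routine manipulation of geometric series of non-commuting formal power series, plus one further use of Theorem \ref{thm:141} on the $b$-side; and the $F_b$-equation is handled by the symmetric argument.
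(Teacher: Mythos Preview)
Your proposal is correct and follows essentially the same route as the paper: expand each entry of $F_a$ via Theorem~\ref{thm:141}, peel off the unique ($a$-coloured) outer block, and identify the generating series of each gap's content with the corresponding entry of $H_a$ by a four-case analysis keyed to the states $(\sigma,\sigma')\in\{1,2\}^2$ of the bounding legs. The paper phrases this by first naming the gap series $h_{k,k'}$, packaging them into a matrix $\widetilde{H}_a$, deriving $F_a\widetilde{H}_a=\eta_a(z\widetilde{H}_a)$, and then verifying $\widetilde{H}_a=H_a$ entry by entry --- but the substance, including your observation about where the constant term of $(H_a)_{2,1}$ comes from, is the same.
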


\begin{proof}
The main tool we will use in order to establish the relations stated in
\eqref{eqn:acPowerSeries} is the VNRP property from Theorem \ref{thm:141}.
We will only prove the first of the two relations, as the proof of the 
second one is analogous.  Note that, even though our current hypotheses 
are such that $(ab)^*=b^*a^*=ba$, we will nevertheless continue to allow 
for occurrences of $a^*$ and $b^{*}$, which we will use to distinguish 
between $a$'s and $b$'s coming from a product $ab$ versus $a$'s and $b$'s 
coming from a product $ba$.
	
For $l,l^\prime\in\{a,a^*\}$ consider $f_{l,l^\prime}$. Each term of $f_{l,l^\prime}$ is a joint 
Boolean cumulant of $a,a^*,b,b^*$ which starts with $l$ and ends with $l^\prime$. 
According to Theorem \ref{thm:141} all partitions in the expansion have 
a unique outer block.  We will sort the terms of $f_{l,l^\prime}$ according to 
the structure of this outer block. So let us fix a possibility for what the
outer block could be -- this has to start with $l$, has to end with 
$l^\prime$, and must contain only $a$'s and/or $a^*$'s. By $h_{k,k^\prime}$ 
we will denote the power series which occurs between consecutive $k$ 
and $k^\prime$ in the outer block. Then we have 
\begin{align*}
f_{l,l^\prime}=\sum_{n=1}^{\infty}
\sum_{\substack{ w(1),\ldots,w(n) \in \{a,a^*\}\\ 
                 with \ w(1)=l,w(n)=l^\prime}}
\beta_n\left(w(1),\ldots,w(n)\right)
z_{w(1)}h_{w(1),w(2)} z_{w(2)} \cdots   
\end{align*}
\[
\hfill
\cdots z_{w(n-1)} h_{w(n-1),w(n)} z_{w(n)}.
\]
	For example we have 
	\begin{align*}
	f_{a,a}=\beta_1(z) z_a+\beta_2(a,a)z_a h_{a,a} z_a+\beta_3(a,a^*,a)z_a h_{a,a^*} z_{a^*} h_{a^*,a} z_a+\ldots
	\end{align*}
Consider then the matrix $\widetilde{H}_a$ defined by
	\begin{align*}
\widetilde{H}_a := \begin{bmatrix}
		h_{a,a} & h_{a,a^*} \\
		h_{a^*,a} & h_{a^*,a^*}
		\end{bmatrix}.
	\end{align*}
Since we assume $a=a^*$ the relation between $f_{l,l^\prime}$ and $h_{k,k^\prime}$ 
can be written on the level of $2 \times 2$ matrices in the form
	\begin{align*}
		F_a=\sum_{n=1}^{\infty}\beta_n(a)\begin{bmatrix}
		z_a&0\\0&z_{a^*}
		\end{bmatrix}\left(\widetilde{H}_a \begin{bmatrix}
		z_a&0\\0&z_{a^*}
		\end{bmatrix}\right)^{n-1}.
	\end{align*}
We assumed that $z_a=z_{a^*}=z$, thus multiplying both sides by 
$\widetilde{H}_a$ immediately gives
	\begin{align*}
	F_a \widetilde{H}_a&=\eta_a(z \widetilde{H}_a).
	\end{align*}
We are left to prove that the matrix $\widetilde{H}_a$ is in fact the 
same as the $H_a$ defined in the statement of the theorem, i.e. that 
the series $h_{l,l'}$ which have appeared in the discussion are such 
that
\begin{align*}
\begin{bmatrix}
h_{a,a} & h_{a,a^*} \\
h_{a^*,a} & h_{a^*,a^*}
\end{bmatrix}
		&=\begin{bmatrix}
f_{b,b} (1-f_{b^*,b})^{-1} & f_{b,b^*}+f_{b,b} (1-f_{b^*,b})^{-1} f_{b^*,b^*}\\ 
(1-f_{b^*,b})^{-1} & (1-f_{b^*,b})^{-1} f_{b^*,b^*} 
		\end{bmatrix}.
\end{align*}
We will analyze separately each of the four entries of this matrix 
equality.  The observation that is used in the discussion of each of 
the four entries is as follows: due to VNRP, it is immediate that between 
any two consecutive elements of the outer block we will have products of 
Boolean cumulants starting with $b$ or $b^*$ and ending with $b$ or $b^*$. 

	\begin{enumerate}

\item {\em Entry (1,1).} 
Assume that the two consecutive variables in the outer block are $a$ and $a$. 
Observe that the element coming right after the first $a$ must be a $b$. The 
element appearing immediately to the left of the second $a$ from the outer 
block could be $a^*$ or $b$, but if it was $a^*$, then by VNRP this $a^*$ 
would be in the outer block and thus the consecutive elements considered in 
the outer block would be $a$ and $a^*$ (rather than $a$ and $a$). We thus 
conclude that the element appearing immediately to the left of the second 
$a$ from the outer block is a $b$.  In order to not violate VNRP between $a$ 
and $a$ we can get a cumulant $\beta_{n_0}(b,\ldots,b)$ or for some $k>1$ we 
can get $\beta_{n_0}(b,\ldots,b)\left(\prod_{i=0}^k\beta_{n_i}(b^*,\ldots,b)\right)$ 
for $n_i\geq 2$.  In $\eta_{n_0}(b,\ldots,b)$ and $\eta_{n_i}(a,\ldots,b^*)$ there 
are $a$ and $a^*$ as arguments but by Theorem \ref{thm:141} writing the term as 
a joint cumulant gives exactly all terms with VNRP property. We conclude that 
in each $a,a$ pocket we can get any term of the power series $f_{bb} (1-f_{b^*b})^{-1}$

\item  {\em Entry (1,2).}
Assume that the two consecutive variables in the outer block are $a$ and $a^*$. 
In this case we find, by a similar argument as above, that between $a$ and $a^*$ 
one can get one block of the form $\beta_{m}(b,\ldots,b^*)$ for $m\geq 2$ or if 
there are more blocks they are of the form 
$\beta_{n_0}(b,\ldots,b)\prod_{i=0}^k\beta_{n_i}(b^*,\ldots,b)
\beta_{n_{k+1}}(b^*,\ldots,b^*)$ for $k\geq 0$ and $n_0,n_{k+1}\geq 1$ and 
$n_i\geq 2$ for $i=1,\ldots,k$. Thus in each $a, a^*$ pocket we get the power 
series $f_{b,b} (1-f_{b^*,b})^{-1} f_{b^*,b^*}$

\item {\em Entry (2,1).} 
Assume that the two consecutive variables in the outer block are $b^*$ and $b$. Then similar analysis shows that in each pocket we can get $\left(1-f_{b^*, b}\right)^{-1}$.

\item {\em Entry (2,2).}
Assume that the two consecutive variables in the outer block are $a^*$ and $a^*$. Then similar analysis shows that in each pocket we can get $\left(1-f_{b^*, b}\right)^{-1}f_{b^*, b^*}$.

\end{enumerate}
\end{proof}

\begin{remark}
(1) Suppose that $( \cA , \varphi )$ is a $C^{*}$-probability space.
In this case the $\eta$--series of $a$ and $b$ are convergent power series 
around zero and the system of equations \eqref{eqn:acPowerSeries} can be 
solved for analytic functions in some neighbourhood of zero.

(2) By specializing to the case when $a$ and $b$ have the same 
distribution, one immediately obtains Theorem \ref{thm:161} of
the Introduction.  Stated in a bit more detail, this goes as 
follows.
\end{remark}

\begin{corollary}  \label{cor:62}
Consider the setting of Theorem \ref{thm:61}, where we make the 
additional assumption that $a$ and $b$ have the same distribution. 
Then one has 
\begin{align*}
	&f_{b,b}=f_{a^*,a^*},\quad f_{b,b^*}=f_{a^*,a},\\
	&f_{b^*,b}=f_{a,a^*},\quad f_{b^*,b^*}=f_{a,a}.
\end{align*}
The system of equations \eqref{eqn:acPowerSeries} reduces to a 
single equation,
\begin{align}\label{eqn:acSameDistr}
	F_aH_a&=\eta_a(z H_a),
\end{align}
where
\begin{align*}
H_a&=\begin{bmatrix}
f_{a^*,a^*} (1-f_{a,a^*})^{-1} & f_{a^*,a}+f_{a^*,a^*} (1-f_{a,a^*})^{-1} f_{a,a}\\ 
(1-f_{a,a^*})^{-1} & (1-f_{a,a^*})^{-1} f_{a,a} 
\end{bmatrix}.
\end{align*}
Moreover, in this case the formula for the $\eta$--series of $ab+ba$ also 
simplifies, and from Equation \eqref{eqeta} one gets that
\begin{align}\label{eqn:etaSamDist}
	\eta_{ab+ba}(z^2)=2\left(f_{a,a^*}(z)+\frac{f_{a,a}(z)f_{a^*,a^*}(z)}{1-f_{a^*,a}(z)}\right).
\end{align}
$\ $ \hfill $\square$
\end{corollary}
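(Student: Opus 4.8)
The plan is to derive everything from the four symmetry identities $f_{b,b}=f_{a^*,a^*}$, $f_{b,b^*}=f_{a^*,a}$, $f_{b^*,b}=f_{a,a^*}$, $f_{b^*,b^*}=f_{a,a}$; once these are in hand, the reduction of the system and the formula for $\eta_{ab+ba}$ are a matter of substitution and elementary bookkeeping.

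For the identities, recall that each $f_{l,l'}$ is, by construction, the restriction of the joint $\eta$-series of $a,a^*,b,b^*$ to those terms $\beta_m(w(1),\dots,w(m))\,z_{w(1)}\cdots z_{w(m)}$ whose provenance-word begins with $z_l$, ends with $z_{l'}$, and is a subword of an admissible word $(z_az_b)^{l_1}(z_{b^*}z_{a^*})^{k_1}\cdots(z_az_b)^{l_n}(z_{b^*}z_{a^*})^{k_n}$. Because $a$ and $b$ are freely independent with the same distribution, the quadruple $(a,a,b,b)$ has the same joint distribution as $(b,b,a,a)$; equivalently, every joint Boolean cumulant is left unchanged when the elements $a$ and $b$ are interchanged. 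At the level of provenance-labels this interchange turns each factor $ab$ into $ba=(ab)^*$ and conversely, hence it acts as the involution $\iota$ given by $a\leftrightarrow b^*$, $a^*\leftrightarrow b$ (equivalently $z_az_b\leftrightarrow z_{b^*}z_{a^*}$). I would then check directly that the family of admissible words is stable under $\iota$ -- it merely interchanges the blocks $z_az_b$ and $z_{b^*}z_{a^*}$ -- and that $\iota$ sends subwords to subwords; consequently $\iota$ is a bijection between the defining terms of $f_{l,l'}$ and those of $f_{\iota(l),\iota(l')}$ which preserves the value of the cumulant attached to each term (the underlying selfadjoint elements are just swapped $a\leftrightarrow b$). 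Since $\iota(a,a)=(b^*,b^*)$, $\iota(a,a^*)=(b^*,b)$, $\iota(a^*,a)=(b,b^*)$, $\iota(a^*,a^*)=(b,b)$, and everything is specialized to the single variable $z$, this yields exactly the four identities. This is the one delicate point of the proof; everything below is routine.

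Granting the identities, one substitutes $f_{b,b}\mapsto f_{a^*,a^*}$, $f_{b^*,b}\mapsto f_{a,a^*}$, $f_{b,b^*}\mapsto f_{a^*,a}$, $f_{b^*,b^*}\mapsto f_{a,a}$ in the matrix $H_a$ of Theorem \ref{thm:61}: the result is precisely the matrix $H_a$ displayed in the statement of the corollary. Next, comparing the explicit entries of $H_a,F_a$ with those of $H_b,F_b$ and using the identities once more, one sees that $F_b=JF_aJ$ and $H_b=JH_aJ$, where $J=\left[\begin{smallmatrix}0&1\\1&0\end{smallmatrix}\right]$; moreover $\eta_b=\eta_a$ because $a$ and $b$ have the same distribution. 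Since conjugation by $J$ is an involutive automorphism of the algebra of $2\times2$ matrices over $\bC[[z]]$, it commutes with the operation $M\mapsto\eta_a(zM)$, so the relation $F_bH_b=\eta_b(zH_b)$ is obtained by conjugating $F_aH_a=\eta_a(zH_a)$ with $J$ on both sides and hence carries no new information. This establishes the reduction of the system \eqref{eqn:acPowerSeries} to the single equation \eqref{eqn:acSameDistr}.

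Finally, for the $\eta$-series of $ab+ba$, I would substitute the four identities into the four entries of the matrix on the right-hand side of \eqref{eqeta} and specialize $z_a=z_{a^*}=z_b=z_{b^*}=z$, so that all series now lie in $\bC[[z]]$ and commute. A short calculation gives $\eta^{1,1}=\eta^{*,*}=f_{a,a}f_{a^*,a^*}\bigl(1-f_{a^*,a}^2\bigr)^{-1}$ and $\eta^{1,*}=\eta^{*,1}=f_{a,a^*}+f_{a,a}f_{a^*,a}f_{a^*,a^*}\bigl(1-f_{a^*,a}^2\bigr)^{-1}$. Adding the four entries, and recalling from the splitting \eqref{eqn:6c} together with the observation after \eqref{eqn:6a} that under this specialization $\eta^{1,1}+\eta^{1,*}+\eta^{*,1}+\eta^{*,*}$ equals $\eta_{ab+ba}(z^2)$, one obtains $\eta_{ab+ba}(z^2)=2\bigl(f_{a,a^*}+f_{a,a}f_{a^*,a^*}(1+f_{a^*,a})(1-f_{a^*,a}^2)^{-1}\bigr)$, which reduces via $1-f_{a^*,a}^2=(1-f_{a^*,a})(1+f_{a^*,a})$ to the asserted formula \eqref{eqn:etaSamDist}. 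The substance of the argument is the exchange-symmetry of the second paragraph; the manipulations in the last two paragraphs are straightforward.
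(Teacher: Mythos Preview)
Your proof is correct. The paper itself offers no proof for this corollary (it simply closes with a $\square$, treating the statement as an immediate specialization of Theorem~\ref{thm:61} and Equation~\eqref{eqeta}), so you have supplied considerably more detail than the authors do.

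Your explicit use of the label involution $\iota\colon a\leftrightarrow b^{*},\ a^{*}\leftrightarrow b$ is a clean way to justify the four identities $f_{b,b}=f_{a^{*},a^{*}}$, etc.; the paper presumably intends the reader to see this but never spells it out. Likewise, your observation that $F_b=JF_aJ$ and $H_b=JH_aJ$ with $J=\left[\begin{smallmatrix}0&1\\1&0\end{smallmatrix}\right]$, together with the fact that conjugation by $J$ commutes with $M\mapsto\eta_a(zM)$, is a tidy device for showing the second matrix equation is redundant---again something the paper simply asserts. The final computation summing the four entries of~\eqref{eqeta} and simplifying via $1-f_{a^{*},a}^{2}=(1-f_{a^{*},a})(1+f_{a^{*},a})$ is routine and carried out correctly.
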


\vspace{10pt}

\subsection{The special case of symmetric distributions.}

$\ $

\noindent
As explained in Remark \ref{rem:156}, the calculation of the distribution 
of a free anticommutator $ab+ba$ is much more approachable in the case 
when $a$ and $b$ have symmetric distributions.  This fact also manifests 
itself in the framework of Theorem \ref{thm:61}, where the hypothesis 
that $a$ and $b$ have symmetric distributions leads to the simplified 
statement of the next proposition.

\begin{proposition}   \label{prop:64}
Consider the framework and notation of Theorem \ref{thm:61}, and
let us also assume that $a$ and $b$ have symmetric distributions.
Then the power series appearing in Theorem \ref{thm:61} are such that
$f_{a,a}=f_{a^*,a^*}=f_{b,b}=f_{b^*,b^*}=0$, and the matrices $H_a$ and $H_b$ 
from that theorem become:
\[
		H_a = \begin{bmatrix}
		0 & f_{b,b^*} \\ (1-f_{b^*,b})^{-1} & 0 
		\end{bmatrix}, \ \ \  
	H_b = \begin{bmatrix}
	0 & (1-f_{a,a^*})^{-1} \\ 
        f_{a^*,a} & 0
	\end{bmatrix} .
\]
The system of equations \eqref{eqn:acPowerSeries} simplifies to
		\begin{align}\label{eqn:SymAcPowerSeries}
		\begin{cases}
		\begin{bmatrix}
		f_{a,a^*}(1-f_{b^*,b})^{-1} & 0 \\0 & f_{a^*,a}f_{b,b^*}
		\end{bmatrix}&=\eta_a(z H_a),\\ \\
		\begin{bmatrix}
		f_{b,b^*}f_{a^*,a} & 0 \\0 & (1-f_{a,a^*})^{-1}f_{b^*,b}
		\end{bmatrix}&=\eta_b(z H_b),
		\end{cases}
		\end{align}
and one has $\eta_{ab+ba}(z^2)=f_{a,a^*}(z)+f_{b^*,b}(z)$.
\end{proposition}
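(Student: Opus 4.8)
The plan is to reduce everything to two ingredients: the symmetry-induced sign relations for joint Boolean cumulants, and the system of equations already established in Theorem \ref{thm:61} together with the matrix identity \eqref{eqeta}. First I would record the arithmetic consequences of the hypothesis. Since $a$ has a symmetric distribution, $a$ and $-a$ have the same distribution; as $a$ is free from $b$, it follows that $(a,b)$ and $(-a,b)$ have the same joint $*$-distribution, so every joint Boolean cumulant $\beta_m(c_1,\ldots,c_m)$ with $c_i\in\{a,a^*,b,b^*\}$ is unchanged when each occurrence of $a$ or $a^*$ among its arguments is replaced by $-a$. By multilinearity this rewrites $\beta_m(c_1,\ldots,c_m)$ as $(-1)^{k}$ times itself, where $k$ is the number of $a$-type arguments, so it vanishes whenever $k$ is odd; symmetrically it vanishes whenever the number of $b$-type arguments is odd. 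In particular $\beta_{2k-1}(a)=\beta_{2k-1}(b)=0$ for all $k$.

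Next I would prove the key assertion $f_{a,a}=f_{a^*,a^*}=f_{b,b}=f_{b^*,b^*}=0$. The point is that, by the explicit combinatorial description of the series $f_{l,l'}$ given in the discussion preceding Theorem \ref{thm:61} (and visible on the sample expansions in \eqref{eqn:6e}), every monomial $z_{c_1}\cdots z_{c_m}$ occurring in $f_{a,a}$ — and in the same way in $f_{a^*,a^*}$, $f_{b,b}$, $f_{b^*,b^*}$ — has odd total degree $m$; equivalently the number of $a$-type variables plus the number of $b$-type variables in it is odd, so exactly one of these two counts is odd. Its coefficient is the joint Boolean cumulant $\beta_m(c_1,\ldots,c_m)$, which therefore vanishes by the sign argument of the previous paragraph. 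Hence all four of these power series are identically zero. I expect this parity bookkeeping to be the one delicate step: it requires a careful reading of the words $(z_az_b)^{l_1}(z_{b^*}z_{a^*})^{k_1}\cdots$ of which these monomials are subwords, bearing in mind that the star-decorations on the variables only serve to record whether a letter came from a product $ab$ or from a product $ba$.

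It then remains to substitute and read off. Putting $f_{a,a}=f_{a^*,a^*}=f_{b,b}=f_{b^*,b^*}=0$ into the matrices $H_a$ and $H_b$ of Theorem \ref{thm:61} collapses them to the antidiagonal matrices displayed in the proposition, while $F_a$ and $F_b$ also become antidiagonal; multiplying out $F_aH_a$ and $F_bH_b$ — using that we work in the commutative ring $\bC[[z]]$, so the order of the factors is immaterial — turns the system \eqref{eqn:acPowerSeries} into \eqref{eqn:SymAcPowerSeries}. Finally, inserting the same four vanishings into \eqref{eqeta} makes the diagonal entries $\eta^{1,1}$ and $\eta^{*,*}$ vanish and leaves $\eta^{1,*}=f_{a,a^*}$ and $\eta^{*,1}=f_{b^*,b}$; summing the four entries as in \eqref{eqn:6c} then gives $\eta_{ab,b^*a^*}=f_{a,a^*}+f_{b^*,b}$, and since in the present specialization ($a=a^*$, $b=b^*$, $z_a=z_{a^*}=z_b=z_{b^*}=z$) the series $\eta_{ab,b^*a^*}$ of \eqref{eqn:6a} is precisely $\eta_{ab+ba}(z^2)$, this yields $\eta_{ab+ba}(z^2)=f_{a,a^*}(z)+f_{b^*,b}(z)$, as claimed.
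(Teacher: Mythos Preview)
Your proof is correct and follows essentially the same overall strategy as the paper: establish the vanishing $f_{a,a}=f_{a^*,a^*}=f_{b,b}=f_{b^*,b^*}=0$, then substitute into the matrices of Theorem~\ref{thm:61} and into \eqref{eqeta}. The substitution steps are identical to the paper's.

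Where you genuinely diverge is in the argument for the vanishing of the odd-length mixed Boolean cumulants. The paper first records that $\beta_{2n-1}(a)=\beta_{2n-1}(b)=0$, and then invokes Theorem~\ref{thm:141} to expand each mixed Boolean cumulant as a sum of products of univariate Boolean cumulants; since every block of a partition in the index set is monochromatic and the total length is odd, some factor must be an odd-length $\beta_k(a)$ or $\beta_k(b)$ and hence vanish. You instead use a sign argument: freeness of $a$ from $b$ together with symmetry of the law of $a$ forces $(a,b)$ and $(-a,b)$ to have the same joint distribution, hence the same joint Boolean cumulants, and multilinearity then kills any cumulant with an odd number of $a$-type arguments (and symmetrically for $b$). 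Your route is a little more self-contained in that it does not call on the VNRP expansion at all, only on the fact that freeness determines the joint law from the marginals; the paper's route stays within the machinery developed earlier and makes the parity-of-blocks reason for the vanishing more visible. Both arguments are short and correct.
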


\begin{proof}
We first note that from the fact that 
$\varphi (a^{2n-1}) = \varphi (b^{2n-1})=0$ for all $n \in \bN$
and from the formulas connecting Boolean cumulants to moments we get 
$\beta_{2n-1}(a)=\beta_{2n-1}(b)=0$ for all $n \in \bN$.
Now, the coefficients of the power series 
$f_{a,a},f_{a^*,a^*},f_{b,b},f_{b^*,b^*}$ are odd length joint Boolean 
cumulants of $a$ and $b$, thus each of them contains either an odd number 
of $a$'s or an odd number of $b$'s.  From Theorem \ref{thm:141} one 
gets that every such joint cumulant is zero (since upon writing it as 
a sum of products in the way indicated by Theorem \ref{thm:141}, each 
term will contain an odd length Boolean cumulant of $a$ or of $b$).
Thus we get $f_{a,a}=f_{a^*,a^*}=f_{b,b}=f_{b^*,b^*}=0$, and the claims 
of the proposition then follow from the general formulas obtained in 
Theorem \ref{thm:61}.
\end{proof}

\begin{corollary}    \label{cor:65}
In the framework of Proposition \ref{prop:64}, assume moreover that 
$a$ and $b$ have the same distribution.  Then the Equations 
\eqref{eqn:SymAcPowerSeries} of Proposition \ref{prop:64} further 
simplify to
		\begin{align}\label{eqn:AcSymSameDistr}
		\begin{bmatrix}
			f_{a,a^*}(1-f_{a,a^*})^{-1} & 0 \\0 & f_{a^*,a}^2
		\end{bmatrix}&=\eta_a(z H_a),
		\end{align}
		where
		\begin{align*}
		H_a&=\begin{bmatrix}
		0 & f_{a^*,a}\\ (1-f_{a,a^*})^{-1} & 0 
		\end{bmatrix}.
		\end{align*}
Moreover, in this case one gets that
$\eta_{ab+ba}(z^2)=2f_{a,a^*}(z)$.
\hfill $\square$
\end{corollary}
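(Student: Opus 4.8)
The plan is to derive Corollary \ref{cor:65} by pure substitution, combining the two specializations already established upstream: the vanishing of certain series forced by symmetry (Proposition \ref{prop:64}) and the identifications among the $f$-series forced by $a$ and $b$ having equal distribution (Corollary \ref{cor:62}). The hypotheses here place us simultaneously in the frameworks of both results, so all of their conclusions are available; in particular $f_{a,a}=f_{a^{*},a^{*}}=f_{b,b}=f_{b^{*},b^{*}}=0$, while Corollary \ref{cor:62} gives $f_{b,b^{*}}=f_{a^{*},a}$ and $f_{b^{*},b}=f_{a,a^{*}}$ (its two remaining identities becoming $0=0$).

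First I would take the single matrix equation $F_aH_a=\eta_a(zH_a)$ of \eqref{eqn:acSameDistr} and plug in $f_{a,a}=f_{a^{*},a^{*}}=0$. The matrix $H_a$ of Corollary \ref{cor:62} then collapses to $\begin{bmatrix}0 & f_{a^{*},a}\\ (1-f_{a,a^{*}})^{-1} & 0\end{bmatrix}$, which is exactly the $H_a$ in the statement, while $F_a$ becomes $\begin{bmatrix}0 & f_{a,a^{*}}\\ f_{a^{*},a} & 0\end{bmatrix}$. A one-line $2\times2$ matrix product gives $F_aH_a=\begin{bmatrix}f_{a,a^{*}}(1-f_{a,a^{*}})^{-1} & 0\\ 0 & f_{a^{*},a}^{2}\end{bmatrix}$, which is the left-hand side of \eqref{eqn:AcSymSameDistr}; this proves that equation. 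For the scalar identity I would use the formula $\eta_{ab+ba}(z^{2})=f_{a,a^{*}}(z)+f_{b^{*},b}(z)$ recorded at the end of Proposition \ref{prop:64} and substitute $f_{b^{*},b}=f_{a,a^{*}}$, which yields $\eta_{ab+ba}(z^{2})=2f_{a,a^{*}}(z)$. (Equivalently one may substitute $f_{a,a}=f_{a^{*},a^{*}}=0$ into \eqref{eqn:etaSamDist}.)

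It then remains to observe that nothing is lost in passing from the two-equation system \eqref{eqn:SymAcPowerSeries} (equivalently, the pair \eqref{eqn:acPowerSeries}) to the single Equation \eqref{eqn:AcSymSameDistr}: under $\eta_b=\eta_a$ and the same $f$-identities, the second equation becomes the ``transposed twin'' of the first. The reason the two coincide is that, after the specialization $z_a=z_{a^{*}}=z_b=z_{b^{*}}=z$, all the $f$-series are power series in the single variable $z$ and hence commute, while symmetry forces $\beta_{2k-1}(a)=0$ (as already noted in the proof of Proposition \ref{prop:64}); for an anti-diagonal matrix $M$ with commuting entries, $M^{2k}$ is scalar and $M^{2k-1}$ is a scalar multiple of $M$, so $\eta_a(zH_a)=\sum_{k\ge1}\beta_{2k}(a)z^{2k}H_a^{2k}$ is a scalar matrix, and the same holds for $\eta_b(zH_b)$. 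Thus each of the two matrix equations is genuinely a single scalar relation, and they carry the same content.

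The whole argument is routine bookkeeping with $2\times2$ matrices; there is no real obstacle. The only place calling for a little care is the last paragraph — confirming that the second half of the system is redundant — which rests on the anti-diagonal shape of $H_a$ (a consequence of $f_{a,a}=f_{a^{*},a^{*}}=0$) together with the vanishing of odd Boolean cumulants of $a$.
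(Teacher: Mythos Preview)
Your argument is correct and is exactly the substitution the paper has in mind; the paper itself offers no proof for this corollary (it ends with a bare $\square$), treating it as immediate from Proposition~\ref{prop:64} and Corollary~\ref{cor:62}, and your write-up simply supplies the details of that specialization. One small wording issue: saying each matrix equation is ``genuinely a single scalar relation'' is not quite accurate --- the diagonal entries of $F_aH_a$ need not coincide a priori, so \eqref{eqn:AcSymSameDistr} really encodes two scalar equations in the two unknowns $f_{a,a^*}, f_{a^*,a}$ --- but your redundancy conclusion is still correct, since the second matrix equation is the transpose of the first (as $H_b=H_a^{T}$ after the identifications and $\eta_a$ commutes with transposition).
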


$\ $

While the examples of free anticommutators of symmetric distributions 
can be handled with the methods from \cite{NiSp1998}, we nevertheless 
discuss two such examples, mostly in order to point out that 
Theorem \ref{thm:154} can be used (in some sense) in reverse, for 
getting corollaries about the enumeration of ac-friendly
non-crossing partitions.

\begin{example}   \label{ex:66}
Suppose that $p$ and $q$ are two free projections in a $*$-probability space 
$( \cA , \varphi )$, such that $\varphi (p) = \varphi (q) = 1/2$, and
let $a := 2p-1, b = 2q-1$.  Then $a$ and $b$ are as in Corollary \ref{cor:65},
where the common distribution of $a$ and $b$ is the symmetric Bernoulli 
distribution $\frac{1}{2} ( \delta_{-1} + \delta_1 )$.  The distribution of 
$ab+ba$ can be computed by using Corollary \ref{cor:65}; but this special 
example is actually much easier to handle, since it is immediate that 
$u := ab$ is a Haar unitary in $( \cA , \varphi )$, which implies by direct
calculation (cf. Example 1.14 in Lecture 1 of \cite{NiSp2006}) that 
$ab + ba = u + u^{*}$ has the arcsine distribution with density 
$( \pi \sqrt{4-t^2} )^{-1}$ on the interval $[-2,2]$.

On the other hand, one can also look at what Theorem \ref{thm:154}
has to say in connection to this example, and this leads to the 
corollary stated next.  For this corollary, recall that a 
non-crossing partition $\sigma \in NC(2n)$ is said to be a pairing 
when every block $V$ of $\sigma$ has $|V| = 2$.  The set of all 
non-crossing pairings in $NC(2n)$ is denoted by $NC_2 (2n)$.  
It is well-known that the cardinality of $NC_2 (2n)$ is equal to 
$\Cat_n$, the same Catalan number which counts all the non-crossing 
partitions in $NC(n)$.
\end{example}

\begin{corollary}  \label{cor:67}
For every $m \in \bN$, one has that 
$| \, \NCacfriendly (4m) \cap NC_2 (4m) \, | = \Cat_{m-1}$
and that
$\NCacfriendly (4m-2) \cap NC_2 (4m-2) = \emptyset$.
\end{corollary}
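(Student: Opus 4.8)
The plan is to deduce both equalities from the concrete model already set up in Example~\ref{ex:66}, by comparing two computations of the Boolean cumulants of $ab+ba$: the combinatorial one coming from Theorem~\ref{thm:154} (in the specialized form of Remark~\ref{rem:155}(2)), and the analytic one coming from the fact that $ab+ba=u+u^{*}$ is arcsine-distributed.

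First I would pin down the relevant Boolean cumulant sequences. With $a=2p-1$, $b=2q-1$ as in Example~\ref{ex:66}, one has $a^{2}=b^{2}=1_{\cA}$ and $\varphi(a)=\varphi(b)=0$, so the common distribution is $\tfrac12(\delta_{-1}+\delta_{1})$, with moment series $\tfrac{z^{2}}{1-z^{2}}$ and hence $\eta$-series $\eta_{a}(z)=z^{2}$. Thus the common Boolean cumulant sequence $(\lambda_{n})_{n\geq 1}$ has $\lambda_{2}=1$ and $\lambda_{n}=0$ for $n\neq 2$. Substituting this into Equation~(\ref{eqn:155a}) of Remark~\ref{rem:155}(2), the product $\prod_{V\in\sigma}\lambda_{|V|}$ is $1$ precisely when every block of $\sigma$ has size $2$ and is $0$ otherwise, so for every $n\in\bN$
\[
\beta_{n}(ab+ba)=2\,\bigl|\,\NCacfriendly(2n)\cap NC_2(2n)\,\bigr|.
\]

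Next I would compute the left-hand side analytically. By Example~\ref{ex:66}, $ab+ba$ has the arcsine law on $[-2,2]$, whose even moments are $\varphi\bigl((ab+ba)^{2k}\bigr)=\binom{2k}{k}$ and whose odd moments vanish; hence $M_{ab+ba}(z)=\tfrac{1}{\sqrt{1-4z^{2}}}-1$, and using $\eta=M/(1+M)$ one gets $\eta_{ab+ba}(z)=1-\sqrt{1-4z^{2}}$. Since $1-\sqrt{1-4w}=2w\sum_{k\geq 0}\Cat_{k}w^{k}$, setting $w=z^{2}$ yields $\eta_{ab+ba}(z)=\sum_{m\geq 1}2\Cat_{m-1}z^{2m}$, i.e. $\beta_{2m}(ab+ba)=2\Cat_{m-1}$ and $\beta_{2m-1}(ab+ba)=0$ for all $m\in\bN$. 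Matching this against the displayed formula above: taking $n=2m$ gives $|\NCacfriendly(4m)\cap NC_2(4m)|=\Cat_{m-1}$, and taking $n=2m-1$ gives $|\NCacfriendly(4m-2)\cap NC_2(4m-2)|=0$, which are exactly the two assertions.

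I do not expect a genuine obstacle here: the only points demanding care are the elementary moment/cumulant computations for the symmetric Bernoulli and arcsine laws, and keeping the parity bookkeeping straight when translating the index $2n$ into the values $4m$ and $4m-2$. (If desired, the vanishing statement admits an independent direct proof: one inspects condition (AC-Friendly2$'$) of Proposition~\ref{prop:63} via the canonical alternating colouring to see that a pairing of $\{1,\dots,4m-2\}$ can never be ac-friendly; but the argument through Example~\ref{ex:66} is shorter and already yields the exact count in the $4m$ case for free.)
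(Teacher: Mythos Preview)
Your argument is correct and follows essentially the same route as the paper's own proof: both specialize Remark~\ref{rem:155}(2) to the symmetric Bernoulli case of Example~\ref{ex:66} to identify $\beta_n(ab+ba)$ with twice the desired cardinality, and then read off the Boolean cumulants of the arcsine law. You supply more detail in the analytic step (explicit moment series and the Catalan expansion of $1-\sqrt{1-4z^{2}}$), but the strategy is identical.
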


\begin{proof}
Take $a$ and $b$ as in the preceding example.  It is an easy 
exercise to check that the common Boolean cumulants 
$( \lambda_n )_{n=1}^{\infty}$ for $a$ and for $b$ come out 
as $\lambda_2 = 1$ and $\lambda_n = 0$ for all $n \neq 2$.
Equation (\ref{eqn:155a}) from Remark \ref{rem:155}(2) thus 
tells us that 
\[
\beta_n (ab+ba) = 2 \cdot 
| \, \NCacfriendly (2n) \cap NC_2 (2n) \, |,
\ \ \forall \, n \in \bN .
\]
On the other hand, the direct calculation of the $\eta$-series
of the arcsine distribution gives us that 
$\beta_n (ab+ba) = 2  \, \Cat_{(n-2)/2}$  when $n$ is even,
and that $\beta_n (ab+ba) = 0$ when $n$ is odd, which
leads to the formulas stated in the corollary.
\end{proof}

\begin{remark}  \label{rem:68}
Let $m$ be a positive integer.  It is easy to easy that if 
$\pi \in NC_2 (2m)$ and if $\pi$ has $\{ 1, 2m \}$ as a pair, 
then the natural process of ``doubling'' the pairs of $\pi$ 
leads to a pairing in $\NCacfriendly (4m)$.  This construction 
produces $\Cat_{m-1}$ examples of pairings in $\NCacfriendly (4m)$, 
and the above corollary tells us that all the pairings in 
$\NCacfriendly (4m)$ are obtained in this way.
\end{remark}

\begin{example}   \label{ex:69}
Suppose we want to repeat the trick from Corollary \ref{cor:67} 
in order to calculate the number of partitions 
$\sigma \in \NCacfriendly (2n)$ with the property that every block 
$V \in \sigma$ has even cardinality.
To this end we now start with two selfadjoint elements 
$a, b$ in a $*$-probability space $( \cA , \varphi )$ such 
that $a$ is free from $b$ and such that both $a$ and $b$ 
have distribution 
\begin{equation}   \label{eqn:69a}
\frac{1}{4}\delta_{-\sqrt{2}}+\frac{1}{2}\delta_{0}+
\frac{1}{4}\delta_{\sqrt{2}} 
\end{equation}
with respect to $\varphi$.  The reason for choosing to use 
the distribution in (\ref{eqn:69a}) is that its $\eta$-series is 
$z^2 / (1-z^2)$, which makes the common sequence 
$( \lambda_n )_{n=1}^{\infty}$ of Boolean cumulants for $a$ and 
for $b$ to be given by 
\begin{equation}   \label{eqn:69b}
\lambda_n = \left\{  \begin{array}{ll}
1, & \mbox{if $n$ is even,}  \\
0, & \mbox{if $n$ is odd.}
\end{array}  \right.
\end{equation}

When applying Corollary \ref{cor:65} to the $a$ and $b$ of the 
present example, the system of equations presented in 
\eqref{eqn:AcSymSameDistr} becomes
	\begin{align*}
	\begin{cases}
		f_{a,a^*}(1-z^2f_{a^*,a}(1-f_{a,a^*})^{-1})&=z^2 f_{a^*,a}\\
		f_{a^*,a}(1-z^2f_{a^*,a}(1-f_{a,a^*})^{-1})&=z^2 (1-f_{a,a^*})^{-1}.
	\end{cases}
	\end{align*}
Upon some further processing, we find that the series $f_{a,a^{*}}$ satisfies
the equation
\begin{align*}
	f_{a,a^*}(z)(1-f_{a,a^*}(z))^3=z^4.
\end{align*}
Lagrange inversion formula gives 
\begin{align*}
f_{a,a^*}(z) =
\sum_{n=1}^\infty \frac{3}{4n-1}  {4n-1 \choose n-1} z^{4n},
\end{align*}
and (in view of the formula $\eta_{ab+ba}(z^2)=2f_{a,a^*}(z)$ 
from Corollary \ref{cor:65}) we come to the conclusion that the 
$\eta$-series of $ab+ba$ is
\begin{equation}   \label{eqn:69c}
\eta_{ab+ba}(z) =
2 \sum_{n=1}^\infty \frac{3}{4n-1} {4n-1 \choose n-1} z^{2n}.
\end{equation}

When we look at what Theorem \ref{thm:154} has to say in this 
particular case, we obtain the corollary stated next.  In the
corollary we will use the notation
\[
NC^{ (\mathrm{even}) } (2n)
:= \{ \sigma \in NC (2n) \mid 
\mbox{ every block $V \in \sigma$ has even cardinality} \}.
\]
\end{example}

\begin{corollary}   \label{cor:610}
For every $m \in \bN$, one has that 
\[
| \, \NCacfriendly (4m) \cap NC^{(\mathrm{even})} (4m) \, | 
= \frac{3}{4m-1} {4m-1 \choose m-1} 
\]
and that
$\NCacfriendly (4m-2) \cap NC^{(\mathrm{even})} (4m-2) 
 = \emptyset$. 
\end{corollary}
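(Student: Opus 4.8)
The plan is to feed the distribution used in Example \ref{ex:69} into the anticommutator formula, read off the specialization recorded in Remark \ref{rem:155}(2), and then match the resulting Boolean cumulants against the explicit $\eta$-series already computed in Example \ref{ex:69}. This is exactly the ``Theorem \ref{thm:154} used in reverse'' idea announced before Example \ref{ex:66}.

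First I would take $a,b$ precisely as in Example \ref{ex:69}: selfadjoint, freely independent, each of distribution $\frac14\delta_{-\sqrt2}+\frac12\delta_0+\frac14\delta_{\sqrt2}$, so that their common sequence of Boolean cumulants is $\lambda_n=1$ for $n$ even and $\lambda_n=0$ for $n$ odd, as recorded in (\ref{eqn:69b}). Substituting this sequence into Equation (\ref{eqn:155a}) of Remark \ref{rem:155}(2) gives
\[
\beta_n(ab+ba)=2\sum_{\sigma\in\NCacfriendly(2n)}\ \prod_{V\in\sigma}\lambda_{|V|}.
\]
The key (elementary) observation is that each factor $\lambda_{|V|}$ lies in $\{0,1\}$, so $\prod_{V\in\sigma}\lambda_{|V|}$ equals $1$ exactly when every block of $\sigma$ has even cardinality and equals $0$ otherwise; in other words this product is the indicator of the event $\sigma\in NC^{(\mathrm{even})}(2n)$. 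Hence, for all $n\in\bN$,
\[
\beta_n(ab+ba)=2\,|\,\NCacfriendly(2n)\cap NC^{(\mathrm{even})}(2n)\,|.
\]

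Next I would invoke the explicit $\eta$-series of $ab+ba$ obtained in Example \ref{ex:69}, namely Equation (\ref{eqn:69c}): recalling that $\beta_k(ab+ba)$ is by definition the coefficient of $z^k$ in $\eta_{ab+ba}(z)$, formula (\ref{eqn:69c}) says $\beta_k(ab+ba)=0$ whenever $k$ is odd, while $\beta_{2m}(ab+ba)=2\,\tfrac{3}{4m-1}\binom{4m-1}{m-1}$. Comparing with the previous display at $2n=4m$ (so $n=2m$) gives $|\,\NCacfriendly(4m)\cap NC^{(\mathrm{even})}(4m)\,|=\tfrac{3}{4m-1}\binom{4m-1}{m-1}$, and comparing it at $2n=4m-2$ (so $n=2m-1$, which is odd) gives $2\,|\,\NCacfriendly(4m-2)\cap NC^{(\mathrm{even})}(4m-2)\,|=\beta_{2m-1}(ab+ba)=0$, i.e. $\NCacfriendly(4m-2)\cap NC^{(\mathrm{even})}(4m-2)=\emptyset$. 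This is the whole proof.

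There is really no hard step here: the analytic work (the quartic equation for $f_{a,a^*}$ and the Lagrange inversion leading to (\ref{eqn:69c})) was already carried out in Example \ref{ex:69}, and Theorem \ref{thm:154} together with Remark \ref{rem:155}(2) supplies the combinatorial translation for free. The only things demanding a bit of care are the parity bookkeeping --- matching the argument $2n$ of $\NCacfriendly$ against the values $4m$ and $4m-2$ appearing in the statement, and keeping track of which powers of $z$ occur in (\ref{eqn:69c}) --- together with the trivial remark that a product of $0/1$-valued Boolean cumulants over the blocks of a partition is nonzero exactly for the partitions all of whose blocks are even.
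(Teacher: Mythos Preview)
Your proposal is correct and follows essentially the same approach as the paper: both take $a,b$ from Example~\ref{ex:69}, use Equation~(\ref{eqn:155a}) with the $\lambda_n$'s from (\ref{eqn:69b}) to identify $\beta_n(ab+ba)$ with twice the desired cardinality, and then read off the coefficients from (\ref{eqn:69c}). Your write-up is simply more explicit about the parity bookkeeping and the indicator-function interpretation of $\prod_{V\in\sigma}\lambda_{|V|}$.
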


\begin{proof}
Take $a$ and $b$ as in the preceding example. 
Equation (\ref{eqn:155a}) from Remark \ref{rem:155}(2) 
(used in conjunction to the formula for $\lambda_n$'s 
found in (\ref{eqn:69b})) tells us that 
\[
\beta_n (ab+ba) = 2 \cdot 
| \, \NCacfriendly (2n) \cap NC^{(\mathrm{even})} (2n) \, |,
\ \ \forall \, n \in \bN .
\]
On the other hand, $\beta_n (ab+ba)$ is obtained by extracting
the coefficient of $z^n$ in the equality of power series which 
appeared in (\ref{eqn:69c}).  This immediately leads to the 
formulas stated in the corollary.
\end{proof}

\vspace{10pt}

\subsection{A non-symmetric example.} 

$\ $

\noindent
In this subsection we look at the example where $a$ and $b$ 
have distribution $\frac{1}{2} (\delta_0+ \delta_2)$.

This example offers a very good illustration of how one gets 
to have different distributions for the free commutator and 
anticommutator.  The commutator $i(ab-ba)$ has exactly the 
same arcsine distribution as in Example \ref{ex:66}; indeed, 
the $a,b$ of the current example are obtained by adding $1$ 
to the $a,b$ of Example \ref{ex:66}, and the translation by 
$1$ does not affect the commutator.  The anti-commutator 
$ab+ba$ turns out to have a different distribution, as 
stated in the next proposition.  (Recall that the graph of 
the density $f(x)$ indicated in the proposition was shown 
in Figure 2 at the end of the Introduction, together with 
a histogram of eigenvalues of random matrix approximation.)

\begin{proposition}   \label{prop:611}
Notations as above, with $a,b$ free and having distribution 
$\frac{1}{2} (\delta_0+ \delta_2)$.  Then the distribution of 
$ab+ba$ is the absolutely continuous measure on the interval 
$[-1,8]$ which has density $f(x)$ described as follows:
\begin{align*}
f(x)=\begin{cases}
& \frac{\sqrt{2}}{\pi}\frac{\sqrt{-1-\sqrt{\frac{x-8}{x}}-\frac{4}{x}}}{8-3\sqrt{(x-8 )x}-x}
  \quad\quad\mbox{ for } x \in (-1,0), \\
&                                      \\
& \frac{1}{\pi}\frac{\sqrt{x(4\sqrt{1+x}-x-4)}
  +3\sqrt{(8-x)(4\sqrt{1+x}+x+4)}
  -8\sqrt{\frac{4\sqrt{1+x}-4-x}{x}}}{8(8-x)(1+x)}
  \quad\mbox{for } x \in{(0,8).}
\end{cases}
\end{align*}
\end{proposition}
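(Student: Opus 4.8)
The plan is to obtain $\eta_{ab+ba}$ explicitly from Corollary \ref{cor:62}, then convert to the Cauchy transform and apply the Stieltjes inversion formula. First I would specialize the matrix equation \eqref{eqn:acSameDistr} to the distribution $\mu = \tfrac12(\delta_0+\delta_2)$, whose $\eta$-series is readily computed from $M_\mu(z) = \tfrac{2z}{1-2z}$ via $\eta_\mu = M_\mu/(1+M_\mu)$, giving $\eta_a(z) = \tfrac{2z}{1+2z}$. The scalar equation for $F_a H_a = \eta_a(zH_a)$ then becomes a system in the four series $f_{a,a}, f_{a,a^*}, f_{a^*,a}, f_{a^*,a^*}$; because $\eta_a$ is a simple Möbius-type function, $\eta_a(zH_a) = 2zH_a(I+2zH_a)^{-1}$, so the matrix equation is purely algebraic and can be reduced (eliminating $H_a$ using its definition in terms of the $f$'s) to a small polynomial system. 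Solving this — likely arriving at a single algebraic equation for $f_{a,a^*}$, then reading off $\eta_{ab+ba}(z^2) = 2\bigl(f_{a,a^*}+\tfrac{f_{a,a}f_{a^*,a^*}}{1-f_{a^*,a}}\bigr)$ from \eqref{eqn:etaSamDist} — produces an explicit algebraic equation satisfied by $\eta_{ab+ba}$.

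Next I would pass from $\eta_{ab+ba}$ to the Cauchy transform. Using $M(z) = \eta(z)/(1-\eta(z))$ and $G(w) = \tfrac1w(1 + M(1/w)) = \tfrac{1}{w(1-\eta(1/w))}$, the algebraic equation for $\eta$ transfers to an algebraic equation $P(G(w),w) = 0$ of low degree in $G$. One then solves for $G(w)$ with the branch chosen so that $G(w) \sim 1/w$ as $w\to\infty$ and $G$ maps the upper half-plane to the lower half-plane. This is where the radicals in the statement — the nested square roots involving $\sqrt{(x-8)x}$, $\sqrt{1+x}$, and the rational denominators $8-3\sqrt{(x-8)x}-x$ and $8(8-x)(1+x)$ — will emerge: the support $[-1,8]$ is read off as the locus where the discriminant of $P(\cdot,w)=0$ changes sign (so $G$ picks up an imaginary part), and the two sub-intervals $(-1,0)$ and $(0,8)$ correspond to different sheets/branch behaviors of the algebraic function, which is why the density has a two-case description. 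Finally, Stieltjes inversion $f(x) = -\tfrac1\pi \lim_{\ee\downarrow 0}\operatorname{Im} G(x+i\ee)$ gives the stated density; I would also check that $\int f = 1$ as a sanity test, and note that the atom-free nature follows because $G$ has no poles on $\bR$.

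The main obstacle I expect is the algebra of untangling the $2\times 2$ matrix equation \eqref{eqn:acSameDistr}: although $\eta_a$ is simple, $H_a$ involves $(1-f_{a,a^*})^{-1}$ in several entries, so after clearing denominators one gets a coupled polynomial system whose careful elimination — to reach a clean single equation like $f_{a,a^*}(1-f_{a,a^*})^3 = (\text{something})$ analogous to what happens in Example \ref{ex:69} — requires real care to avoid spurious factors. A secondary difficulty is branch bookkeeping: once $G$ satisfies a cubic or quartic, identifying which root is the correct Cauchy transform on each of $(-1,0)$ and $(0,8)$, and confirming the endpoints $-1$, $0$, $8$ of the support, demands a careful analysis of where the relevant discriminant vanishes and of the sign of the imaginary part. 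The random-matrix histogram in Figure 2 serves as an independent numerical check that the branch choices and the final formula for $f(x)$ are correct.
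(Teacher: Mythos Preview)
Your strategy is exactly the paper's: specialize Corollary~\ref{cor:62} to this distribution, solve the matrix equation \eqref{eqn:acSameDistr} for the $f_{l,l'}$, read off $\eta_{ab+ba}$ from \eqref{eqn:etaSamDist}, pass to the Cauchy transform via $G(w)=\tfrac{1}{w(1-\eta(1/w))}$, and finish with Stieltjes inversion. The paper carries this out and records the intermediate closed form
\[
\eta_{ab+ba}(z)=1-\sqrt{(1-8z)\,\frac{1-2z-\sqrt{1-8z}}{2z}}
\]
before computing $G_{ab+ba}$ and the density.

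There is, however, a computational slip at the very first step that would derail the algebra. For $\mu=\tfrac12(\delta_0+\delta_2)$ the moments are $m_n=\tfrac12\cdot 2^n=2^{n-1}$, so $M_\mu(z)=\dfrac{z}{1-2z}$, not $\dfrac{2z}{1-2z}$; and then $\eta_a(z)=\dfrac{M_\mu}{1+M_\mu}=\dfrac{z}{1-z}$, not $\dfrac{2z}{1+2z}$. (Even from your claimed $M_\mu$ the quotient $M_\mu/(1+M_\mu)$ would be $2z$, so something went astray twice.) With the correct $\eta_a$ the matrix equation becomes $F_a=z(I-zH_a)^{-1}$, which is what the paper actually solves; your form $2zH_a(I+2zH_a)^{-1}$ would produce a different polynomial system and a wrong answer. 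Once you fix $\eta_a$, the rest of your outline goes through as written.
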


\begin{proof}
We have $\eta_a (z) = z/(1-z)$, hence Equation \eqref{eqn:acSameDistr} 
amounts here to
\begin{align}\label{eqn:acBinomial}
F_a=z (1-z H_a)^{-1},
\end{align}
where $(1-z H_a)^{-1}$ can be written explicitly as 
\begin{align*}
\frac{1}{z^2f_{a^*,a}
+z(f_{a,a}+f_{a^*,a^*})+f_{a,a^*}-1}
\begin{bmatrix}
z f_{a,a}+f_{a,a^*}-1 & z f_{a^*,a}(f_{a,a^*}-1)-z f_{a,a} f_{a^*,a^*} \\ 
-z & z f_{a^*,a^*}+f_{a,a^*}-1 
\end{bmatrix}.
\end{align*}
When solving Equation \eqref{eqn:acBinomial}, one gets 6 solutions. 
However, when plugged into the formula for $\eta_{ab+ba}(z^2)$ 
given in Corollary \ref{cor:62}, only one of the 6 solutions satisfies 
the condition that $\eta_{ab+ba}(z^2)$ has a double zero at $z=0$. 
After substituting $z$ by $\sqrt{z}$ in this solution, we come to 
the conclusion that
\begin{equation}   \label{eqn:611x}
\eta_{ab+ba}(z) = 
1 - \sqrt{ (1-8z) \frac{1-2z-\sqrt{1-8z}}{2z} }.
\end{equation}
From the latter formula for the $\eta$-series, a routine calculation 
takes us to the Cauchy transform of the distribution of $ab+ba$, which 
is 
\begin{align*}
G_{ab+ba}(z)=\frac{\sqrt{2}\sqrt{1+\sqrt{\frac{z-8}{z}}
+\frac{4}{z}}}{8+3\sqrt{(z-8)z}-z}.
\end{align*}
Finally, by using the Stieltjes inversion formula on
$G_{ab+ba} (z)$, we find the form of the density $f(x)$ which 
was stated in the proposition.
\end{proof}

Same as the examples from the preceding subsection,
the example considered here has a combinatorial significance,
and can be used to infer the formula indicated in Equation 
(\ref{eqn:15a}) of the Introduction for the generating series 
of cardinalities of sets of ac-friendly partitions.

\begin{corollary}    \label{cor:612}
The generating function for cardinalities of sets 
$\NCacfriendly (2n)$ is
\[
\sum_{n=1}^{\infty}
| \, \NCacfriendly (2n) \, | z^n 
= \frac{1}{2}
  - \sqrt{ (1-8z) \frac{1-2z-\sqrt{1-8z}}{8z} }.
\]
\end{corollary}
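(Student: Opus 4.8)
The plan is to feed the enumeration of ac-friendly partitions into Theorem \ref{thm:154} (equivalently, into Equation (\ref{eqn:155a}) of Remark \ref{rem:155}(2)) by choosing $a$ and $b$ whose common sequence of Boolean cumulants is as simple as conceivable, and then to read off the generating series from the $\eta$-series computation already performed in Proposition \ref{prop:611}.

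First I would take $a,b$ freely independent with common distribution $\frac{1}{2}(\delta_0+\delta_2)$, exactly the elements considered in Proposition \ref{prop:611}. The moment series of this distribution is $M_a(z)=z/(1-2z)$, so by the relation $\eta_a=M_a/(1+M_a)$ recorded in Notation \ref{def:27b} one gets $\eta_a(z)=z/(1-z)=\sum_{n\geq 1}z^n$; hence the common sequence $(\lambda_n)_{n=1}^{\infty}$ of Boolean cumulants of $a$ and $b$ has $\lambda_n=1$ for every $n$. Substituting $\lambda_n\equiv 1$ into Equation (\ref{eqn:155a}) gives
\[
\beta_n(ab+ba)=2\sum_{\sigma\in\NCacfriendly(2n)}\prod_{V\in\sigma}1
=2\,|\,\NCacfriendly(2n)\,|,\qquad n\in\bN,
\]
and therefore $\eta_{ab+ba}(z)=\sum_{n\geq 1}\beta_n(ab+ba)z^n=2\sum_{n\geq 1}|\,\NCacfriendly(2n)\,|z^n$.

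On the other hand, the same free anticommutator $ab+ba$ was analyzed in the proof of Proposition \ref{prop:611}, where its $\eta$-series was determined explicitly in Equation (\ref{eqn:611x}) as
\[
\eta_{ab+ba}(z)=1-\sqrt{\,(1-8z)\,\frac{1-2z-\sqrt{1-8z}}{2z}\,}.
\]
Equating the two expressions for $\eta_{ab+ba}(z)$ and dividing by $2$ would then yield
\[
\sum_{n\geq 1}|\,\NCacfriendly(2n)\,|z^n
=\frac12-\frac12\sqrt{\,(1-8z)\,\frac{1-2z-\sqrt{1-8z}}{2z}\,}
=\frac12-\sqrt{\,(1-8z)\,\frac{1-2z-\sqrt{1-8z}}{8z}\,},
\]
where in the last step the factor $\tfrac12=\sqrt{1/4}$ is absorbed under the radical, turning $2z$ into $8z$ in the denominator. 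This is precisely the asserted formula.

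I expect essentially no obstacle once Proposition \ref{prop:611} is available: the only things needing verification are the identity $\eta_a(z)=z/(1-z)$ (hence $\lambda_n\equiv 1$), the bookkeeping of the factor $2$ from Remark \ref{rem:155}(2), and the trivial algebraic manipulation moving $\tfrac12$ inside the square root. The genuinely substantial work — solving the matrix equation of Corollary \ref{cor:62} and selecting the unique branch for which $\eta_{ab+ba}(z^2)$ has a double zero at $z=0$ — has already been carried out in the proof of Proposition \ref{prop:611}, so here it is simply invoked.
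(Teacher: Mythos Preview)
Your proof is correct and follows exactly the same approach as the paper: choose $a,b$ free with distribution $\tfrac12(\delta_0+\delta_2)$ so that all their Boolean cumulants equal $1$, invoke Equation (\ref{eqn:155a}) to get $\beta_n(ab+ba)=2\,|\NCacfriendly(2n)|$, and then read off the generating series as $\tfrac12\,\eta_{ab+ba}(z)$ from Equation (\ref{eqn:611x}). Your version is slightly more explicit about the computation of $\eta_a$ and the final algebraic step with the radical, but the argument is the same.
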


\begin{proof}
All the Boolean cumulants of $a$ and of $b$ in this example are 
equal to $1$.  As a consequence of this, Equation (\ref{eqn:155a}) 
from Remark \ref{rem:155}(2) simply says that
\[
\beta_n (ab+ba) = 2 \cdot | \, \NCacfriendly (2n) \, |,
\ \ \forall \, n \in \bN .
\]
The generating function for the cardinalities of $\NCacfriendly(2n)$
is thus equal to $\eta_{ab+ba} (z) /2$.  We substitute this into the 
formula for $\eta_{ab+ba} (z)$ which appeared in 
Equation (\ref{eqn:611x}) during the proof of the preceding proposition, 
and the corollary follows.
\end{proof}

$\ $

\end{document}